\title[]{A geometric approach to generalized Stokes conjectures}
\dedicatory{Dedicated to John Toland on the occasion of his 60th birthday}
\author{Eugen Varvaruca}
\address{Department of Mathematics, Imperial College London, 180 Queen's Gate,
London SW7 2AZ, United Kingdom}
\email{e.varvaruca@imperial.ac.uk}
\author[Georg S. Weiss]{Georg S. Weiss}
\address{Graduate School of Mathematical Sciences,
University of Tokyo, 3-8-1 Komaba, Meguro-ku, Tokyo-to, 153-8914
Japan,} \email{gw@ms.u-tokyo.ac.jp}
\urladdr{http://www.ms.u-tokyo.ac.jp/~gw/}
\thanks{$2000$ {\it Mathematics Subject Classification.\/} Primary
35R35, Secondary 76B15, 76B07.}
\thanks{{\it Key words and phrases.\/} Water wave, Stokes conjecture,
singularity, frequency formula, concentration compactness}
\thanks{G.S. Weiss has been partially supported by the Grant-in-Aid
15740100/18740086 of the Japanese Ministry of Education, Culture,
Sports, Science and Technology.}
\date{}
\theoremstyle{plain}
\newtheorem{theorem}{Theorem}[section]
\newtheorem{lemma}[theorem]{Lemma}
\newtheorem*{theorema}{Theorem A}
\newtheorem*{theoremb}{Theorem B}
\newtheorem*{acknowledgment}{Acknowledgment}
\newtheorem{proposition}[theorem]{Proposition}
\newtheorem{corollary}[theorem]{Corollary}
\theoremstyle{definition}
\newtheorem{definition}[theorem]{Definition}
\theoremstyle{definition}
\newtheorem{remark}[theorem]{Remark}
\numberwithin{equation}{section}
\def\R{{\bf R}}
\def\W{{\bf W}}
\def\div{\textrm{\rm div }}
\def\dist{\textrm{\rm dist}}
\def\dh{\,d{\mathcal H}^{n-1}}
\def\dhone{\,d{\mathcal H}^1}
\newcommand{\Om}{\Omega}
\newcommand{\non}{\nonumber}
\newcommand{\be}{\begin{equation}}
\newcommand{\ee}{\end{equation}}
\begin{document}
\begin{abstract}
We consider the Stokes conjecture concerning the shape of extreme
two-dimensional water waves. By new geometric methods including a nonlinear
frequency formula, we prove
the Stokes conjecture in the original variables. Our results do not
rely on structural assumptions needed in previous results such as isolated
singularities, symmetry and monotonicity. Part of our results extends
to the mathematical problem in higher dimensions.
\end{abstract}
\maketitle
\tableofcontents
\section{Introduction}

Consider a two-dimensional inviscid incompressible fluid acted on by
gravity and with a free surface. If we denote by $D(t)\subset
\R^2$ the domain occupied by the  fluid at time $t$, then
the dynamics of the fluid is described by the Euler equations
for the vector velocity field $(u(t,\cdot),v(t,\cdot)):D(t)\to
\R^2$ and the scalar pressure field
$P(t,\cdot):D(t)\to\R:$ \begin{align}u_t + u u_x + vu_y
&=-P_x\quad\text{in }D(t),\non\\
v_t + u v_x + vv_y &=-P_y-g\quad\text{in
}D(t),\non\\u_x+v_y&=0\quad\text{in }D(t),\non
\end{align}
where subscripts denote partial derivatives and $g$ is the gravity
constant. The boundary $\partial D(t)$ of the fluid domain contains
a part, denoted by $\partial_a D(t)$, which is free and in contact
with the air region.
 The equations of motion are supplemented by the standard kinematic
boundary condition
\[V=(u,v)\cdot \nu \quad\text{on }\partial_a D(t),\] where $V$ is the normal speed of $\partial_a D(t)$
 and  $\nu$ is the outer normal vector, and the dynamic boundary
condition
\[P\text{ is locally constant on } \partial_a D(t).\]
We further assume that the flow is irrotational:
\[u_y-v_x=0\quad\text{in }D(t).\]

While recent years have seen great progress in the study of the
initial-value problem (see \cite{swu} for large-time well-posedness
for small data, and the references therein for short-time
well-posedness for arbitrary data), in the present paper  we confine ourselves to
traveling-wave solutions of the above problem, for which there
  exists $D\subset\R^2$, $c\in \R$, $(\tilde u,\tilde v): D\to\R^2$ and
$\tilde P:D\to\R$ such that
\[ D(t)=D+ct(1,0)\quad\text{for all }t\in\R,\]
and for all $t\in\R$ and $(x,y)\in D(t)$,
\[ u(x,y,t)=\tilde u(x-ct,y)+c,\quad v(x,y,t)=\tilde v(x-ct,y),\quad P(x,y,t)=\tilde P(x-ct,y). \]
Consequently the following equations are satisfied:
\begin{align}\tilde u
\tilde u_x + \tilde v\tilde u_y& =-\tilde P_x\quad\text{in }D,\non\\
\tilde u \tilde v_x + \tilde v\tilde v_y &=-\tilde P_y-g
\quad\text{in
}D,\non\\
\tilde u_x+\tilde v_y&=0 \quad\text{in }D,\non\\ \tilde u_y
-\tilde
v_x&=0 \quad\text{in }D,\non\\
(\tilde u,\tilde v)\cdot \nu&=0 \quad\text{on }\partial_a D\non,\\
\tilde P\text{ is locally }&\text{constant on } \partial_a D.\non
\end{align}

The above problem describes both {\em water waves}, in which case we
would add homogeneous Neumann boundary conditions on a flat horizontal
bottom $y=-d$ combined with periodicity in the $x$-direction or some condition at
$x=\pm \infty$, and the equally physical problem of the equilibrium
state of a fluid when pumping in water from one lateral boundary and
sucking it out at the other lateral boundary. In the latter setting
we would consider a bounded domain with an inhomogeneous Neumann
boundary condition at the lateral boundary, and the bottom could be
a non-flat surface.

In both cases, the incompressibility  and the kinematic boundary condition
 imply that
there exists a {\em stream function} $\psi$ in $D$, defined up to a
constant by:
\[ \psi_x=-\tilde v,\qquad\psi_y=\tilde u\qquad\text{in }D.\]
It follows that \[\psi\text{ is locally constant on }\partial_a D.\]
The irrotationality condition shows that
\[\psi \text{ is a harmonic function in } D,\]
and then Bernoulli's principle gives that
\[ \tilde P +\frac{1}{2}|\nabla \psi|^2+gy\quad\text{is constant in } D.\]
 The dynamic boundary condition implies therefore the {\em Bernoulli condition}
\[|\nabla \psi|^2 +2gy\quad\text{is locally constant on } \partial_a
D.\]

A \emph{stagnation point} is one at which the relative velocity
field $(\tilde u, \tilde v)$ is zero, and a wave with stagnation
points on the free surface will be referred to as an \emph{extreme
wave}. Consideration of extreme waves goes back to Stokes, who in
1880 made the famous conjecture that the free surface of an extreme
wave is not smooth at a stagnation point, but has symmetric lateral
tangents forming an angle of $120^\circ$. Stokes \cite{stokespaper}
gave a formal argument in support of his conjecture, which can be
found at the end of this introduction, but a rigorous proof has not
been given until 1982, when Amick, Fraenkel and Toland \cite{toland}
and Plotnikov \cite{plotnikov} proved the conjecture independently
in brilliant papers. These proofs use an equivalent formulation of
the problem as a nonlinear singular integral equation due to
Nekrasov (derived via conformal mapping), and are based on rather
formidable estimates for this equation. In addition, Plotnikov's
proof uses ordinary differential equations in the complex plane.
Moreover, Plotnikov and Toland proved convexity of the two branches
of the free surface \cite{convex}. Prior to these works on the
Stokes conjecture, the existence of extreme periodic waves, of
finite and infinite depth, had been established by Toland \cite{To}
and McLeod \cite{McL}, building on earlier existence results for
large-amplitude smooth waves by Krasovskii \cite{Kr} and by Keady
and Norbury \cite{KN}. Also, the existence of large-amplitude smooth
solitary waves and of extreme solitary waves had been shown by Amick
and Toland \cite{AT}.

In the present paper we
confine ourselves to the case when
\begin{align}
\Delta \psi&=0\quad\text{in }D,\non\\
 \psi&=0\quad\text{on }\partial_a D,\non\\
 |\nabla \psi(x,y)|^2&=-y\quad\text{on  }\partial_a D,\non
\end{align}
and we investigate the shape of the free surface $\partial_a D$
close to stagnation points for extreme waves which {\em a priori}
satisfy minimal regularity assumptions. Note that, since $(\tilde
u,\tilde v)=(\psi_y,-\psi_x)$, the Bernoulli condition implies that
 the free surface is contained in the lower half-plane  and
 that the stagnation points
on the free surface necessarily lie on the real axis and are points of maximal height.

Weak solutions of the above free-boundary problem have been studied
by Shargorodsky and Toland \cite{ST} and Varvaruca \cite{EV3}, who
consider solutions for which the free surface $\partial_a D$ is a
locally rectifiable curve, $\psi\in C^2(D)\cap C^0(\overline D)$ is
harmonic and satisfies the zero Dirichlet boundary condition in the
classical sense, while the Bernoulli condition is satisfied almost
everywhere with respect to the one-dimensional Hausdorff measure by
the non-tangential limits of $\nabla\psi$. They prove that the set
$S$ of stagnation points on the free surface is a set of zero
one-dimensional Hausdorff measure, that $\partial_a D\setminus S$ is
a union of real-analytic arcs, and that $\psi$ has a harmonic
extension across $\partial_a D\setminus S$ which satisfies all
free-boundary conditions in the classical sense outside stagnation
points.

The main objectives of
the present paper are to give affirmative answers to the following two
questions:
\begin{itemize}
\item[(i)] Does the set $S$ consist only of
isolated points?
\item[(ii)] Is the Stokes Conjecture valid at each point of
$S$ ?
\end{itemize}
Prior to our work, Question (i) has been completely open, while the
answer to Question (ii) has been known only partially: from
\cite{toland} and \cite{plotnikov} which have recently been
simplified in \cite{EV} and \cite{EV2}, we know (ii) to be true at
those points of $S$ which satisfy the following conditions in a
neighborhood of the stagnation point: The stagnation point is
isolated, the free surface is symmetric with respect to the vertical
line passing through the stagnation point, it is a monotone graph on
each side of that point, and $\psi$ is strictly decreasing in the
$y$-direction in $D$. All of these conditions are essential for the
proofs in the cited results. Let us mention that from the point of
view of applications, the requirement of symmetry is most
inconvenient, as numerical results indicate the existence of
nonsymmetric extreme waves \cite{chen,VdB,zuf}. Also, for waves with
non-zero vorticity, $\psi$ need not be monotone in the $y$-direction
\cite{W, CV}.

Similarly to \cite{ST} and \cite{EV3}, we consider weak solutions
which are roughly speaking solutions in the sense of distributions.
The precise notion will be given in Definition \ref{weak}. We assume
that $\psi >0$ in $D$, and we extend $\psi$ by the value $0$ to
the air region  so that the fluid domain can be identified with the
set $\{\psi>0\}$. Since our arguments are local, we work in a
bounded domain $\Omega$ which has a non-empty intersection with the
real axis and on which there is defined a continuous function $\psi$
such that, within $\Omega$, $\{\psi>0\}$ corresponds to the fluid
region and $\{\psi=0\}$ to the air region, the part of $\Omega$ in
the upper half-plane being occupied by air.

In the case of only a finite number of connected components of the
air region we recover the Stokes conjecture by geometric methods
(Theorem \ref{main2}), without assuming isolatedness, symmetry or
any monotonicity:

\begin{theorema}
Let $\psi$ be a weak solution of
\begin{align}
 \Delta \psi &= 0 \quad\text{in } \Omega \cap \{\psi>0\},\non\\
{\vert \nabla \psi\vert}^2
  &=   -y  \quad\text{on } \Omega\cap\partial
\{\psi>0\},\non
\end{align}
 and suppose that
 \[|\nabla \psi|^2 \leq -y\quad\text{in }\Omega \cap \{\psi>0\}.\]
Suppose moreover that $\{ \psi=0\}$ has locally only finitely many
connected components. Then the set $S$ of stagnation points is
locally in $\Omega$ a finite set. At each stagnation point
$(x^0,y^0)$ the scaled solution converges to the Stokes corner flow,
that is,
\[\frac{\psi((x^0,y^0)+r(x,y))}{r^{3/2}}\to
\frac{\sqrt{2}}{3} \rho^{3/2}\max(\cos(3(\theta+\pi/2)/2),0)
\quad\text{as } r\searrow 0,\] strongly in
$W^{1,2}_{\textnormal{loc}}(\R^2)$ and locally uniformly on $\R^2$,
where $(x,y)=(\rho\cos\theta, \rho\sin\theta)$, and in an open
neighborhood of $(x^0,y^0)$ the topological free boundary $\partial
\{\psi>0\}$ is the union of two $C^1$-graphs with right and left
tangents at $(x^0, y^0)$.
\end{theorema}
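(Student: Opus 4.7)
My strategy is a blow-up analysis at each stagnation point. Since stagnation points lie on the real axis, fix $(x^0,0)\in S$ and consider the rescalings
\[\psi_r(x,y)=\frac{\psi((x^0,0)+r(x,y))}{r^{3/2}},\]
which are chosen so that both the harmonicity and the Bernoulli condition $|\nabla\psi|^2=-y$ are preserved (the exponent $3/2$ is forced by this invariance). The target profile in the Stokes corner flow is homogeneous of degree $3/2$, so the plan is to show compactness of the rescaled family, then classify the possible limits, then use monotonicity to upgrade subsequential convergence to full convergence, and finally to read off both local finiteness of $S$ and $C^1$-regularity of the two free-boundary branches.

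The engine is a Weiss-type monotonicity formula tailored to the $r^{3/2}$ scaling. I would introduce
\[M_\psi(r)=\frac{1}{r^3}\int_{B_r(x^0,0)}\!\!\bigl(|\nabla\psi|^2+y\,\chi_{\{\psi>0\}}\bigr)\,dxdy-\frac{3}{2r^4}\int_{\partial B_r(x^0,0)}\!\!\psi^2\,d\mathcal{H}^1,\]
and differentiate using the domain variation identity obtained from the weak formulation and the pointwise Bernoulli condition. The awkward feature is that $y\chi_{\{\psi>0\}}$ does not have pure $3/2$-homogeneous scaling; I expect this to generate a controllable correction term, handled via the one-sided bound $|\nabla\psi|^2\leq -y$ and integration by parts. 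Once $M_\psi$ is shown to be almost-monotone with a finite limit at $r\searrow 0$, any blow-up limit $\psi_0$ obtained along a subsequence $r_k\to 0$ is a nontrivial $3/2$-homogeneous weak solution of the same free-boundary problem on $\R^2$.

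Next comes the rigidity step. Writing a homogeneous limit as $\psi_0=\rho^{3/2}f(\theta)$ on the angular components of $\{\psi_0>0\}$, harmonicity reduces to $f''+\tfrac{9}{4}f=0$, while on the lateral rays $\theta=\theta_i$ bounding a positivity sector, the Bernoulli condition becomes $(f'(\theta_i))^2=-\sin\theta_i$. The general sinusoidal solution $f(\theta)=A\cos(\tfrac32\theta+\phi)$ forces any positivity sector to have opening $2\pi/3$, and the Bernoulli-endpoint relation then pins the sector in exactly one position, centered on $\theta=-\pi/2$. Together with the zero solution this gives a complete classification, and the expected angle of $120^\circ$ emerges from the opening $2\pi/3$. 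The most delicate step, and what I expect to be the main obstacle, is excluding the trivial limit $\psi_0\equiv 0$: here I would use a concentration-compactness dichotomy for $\psi_r$ combined with a quantitative lower bound on $M_\psi$ at stagnation points (again invoking the one-sided constraint), to rule out vanishing of the rescalings. Once the trivial blow-up is excluded and the uniqueness of the nonzero homogeneous solution is in hand, monotonicity of $M_\psi$ promotes subsequential convergence to full convergence of $\psi_r$ to the Stokes corner flow, strongly in $W^{1,2}_{\mathrm{loc}}(\R^2)$ and locally uniformly.

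To conclude local finiteness of $S$, I would argue by contradiction: if stagnation points accumulated at some $(x^*,0)$, then the Stokes blow-up at $(x^*,0)$ forces, at all sufficiently small scales, the air region to split locally into two distinct components along the two branches of the $120^\circ$ cusp; each nearby stagnation point in a tiny ball would contribute a further topologically separated air component, producing infinitely many connected components of $\{\psi=0\}$ in a fixed neighborhood, contrary to hypothesis. Hence $S$ is locally isolated. Finally, the uniform convergence of $\psi_r$ to the explicit Stokes flow, whose free boundary consists of two rays on which the gradient of the limit profile is nonzero and transverse, combined with a standard epiperimetric/implicit-function argument in the perturbed setting, yields that $\partial\{\psi>0\}$ is in a neighborhood of $(x^0,0)$ the union of two $C^1$-graphs sharing the prescribed one-sided tangents at the stagnation point.
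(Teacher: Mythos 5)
Your outline correctly identifies the right rescaling, the right Weiss-type monotonicity quantity (your $M_\psi$ is essentially the paper's $\Phi^{\textnormal{bound}}_{x^0,u}$), and the ODE classification of $3/2$-homogeneous blow-up limits. Two observations before the main issue. First, a small point: you expect $y\chi_{\{\psi>0\}}$ to produce a correction term that needs controlling, but when the base point lies on $\{y=0\}$ the term $x_n$ rescales with exactly the right degree, so the boundary monotonicity formula in the paper is \emph{exactly} monotone, not almost-monotone; the $O(\sigma)$ error occurs only for interior base points. Second, your classification step is sound in spirit, but one must also verify that $\chi_{\{\psi_r>0\}}$ passes to the limit correctly and that the blow-up limit inherits a domain-variation identity; this is where the $BV$ estimate on $\chi_{\{u>0\}}$ (Lemma~\ref{arevar}) is needed.

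The genuine gap is the exclusion of the trivial blow-up $\psi_0\equiv 0$. You propose to rule it out via ``a concentration-compactness dichotomy combined with a quantitative lower bound on $M_\psi$ at stagnation points, invoking the one-sided constraint.'' No such lower bound is available: the one-sided bound $|\nabla\psi|^2\leq -y$ gives an \emph{upper} barrier and controls growth from above, but yields nothing from below, and the solutions of interest are not energy minimisers, so the usual non-degeneracy estimates of free-boundary theory simply fail. This is not a technicality to be finessed --- the trivial blow-up at the $3/2$-scaling genuinely can occur at ``degenerate'' stagnation points, and a large part of the paper (the Frequency Formula, Proposition~\ref{blowup}, the Evans--M\"uller concentration-compactness argument, and Theorem~\ref{deg2d}) is devoted precisely to analysing this regime. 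There one shows that a degenerate point has an integer frequency $N\geq 2$ and a rescaled limit $\propto \rho^{N}|\sin(N\theta)|$; only \emph{then} does the hypothesis of finitely many air components enter, to derive a contradiction (the reduced free boundary of the rescaled solution would carry a fixed amount of $\mathcal{H}^1$-mass away from $\{x_2=0\}$, whereas $r^{-3/2}\Delta\psi(B_r)\to 0$). Your proposed use of the finite-components hypothesis only at the local-finiteness stage, after already assuming the Stokes blow-up holds everywhere on $S$, is therefore circular: the finite-components hypothesis is exactly what lets one rule out the non-Stokes (degenerate) blow-ups in the first place, and it must be invoked inside the blow-up classification, not afterwards.
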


Let us remark that the assumption
 \[|\nabla \psi|^2 \leq -y\quad\text{in }\{\psi>0\}\]
has been verified in \cite[Proof of Theorem 3.6]{EV3} for weak
solutions, in the sense of \cite{ST} and \cite{EV3} described
earlier, of the water-problem in all its classical versions:
periodic and solitary waves of finite depth (in which the fluid
domain has a fixed flat bottom $y=-d$, at which $\psi$ is constant),
and periodic waves of infinite depth (in which the fluid domain
extends to $y=-\infty$ and the condition $\lim_{y\to -\infty}
\nabla\psi (x,y)=(0,-c)$ holds, where $c$ is the speed of the wave).
The proof is merely an extension of that of Spielvogel \cite[Proof
of Theorem 3b]{spiel} for classical solutions, which is based on the
Bernstein technique.

In the case of an infinite number of connected components of the air
region, we obtain the following result (cf. Theorem \ref{main1}):

\begin{theoremb}
Let $\psi$ be a weak solution of
\begin{align}
 \Delta \psi& = 0 \quad\text{in } \Omega \cap \{\psi>0\},\non\\
{\vert \nabla \psi\vert}^2
  &=   -y  \quad\text{on } \Omega\cap\partial
\{\psi>0\},\non
\end{align}
 and suppose that \[|\nabla \psi|^2 \leq -y\quad\text{in }\Omega\cap \{\psi>0\}.\]
Then the set $S$ of stagnation points is a finite or countable set.
Each accumulation point of $S$ is a point of the locally finite set
$\Sigma$ described in more detail in the following lines.

At each point $(x^0,y^0)$ of $S\setminus\Sigma$,
\[\frac{\psi((x^0,y^0)+r(x,y))}{r^{3/2}}\to \frac{\sqrt{2}}{3}
\rho^{3/2}\max(\cos(3(\theta+\pi/2)/2),0) \quad\text{as } r\searrow
0,\] strongly in $W^{1,2}_{\textnormal{loc}}(\R^2)$ and locally
uniformly on $\R^2$, where $(x,y)=(\rho\cos\theta, \rho\sin\theta)$.
 The
scaled free surface converges to that of the Stokes corner flow in
the sense that,  as  $r\searrow 0$, \[\mathcal{L}^2(B_1\cap
(\{(x,y):
\psi((x^0,y^0)+r(x,y))>0\}\bigtriangleup\{\cos(3(\theta+\pi/2)/2)>0\}))
\to 0.\]

At each point $(x^0,y^0)$ of $\Sigma$ there exists an integer $N=
N(x^0,y^0)\geq 2$ such that
\[\frac{\psi((x^0,y^0)+r(x,y))}{{r^\beta}}\to 0 \quad\text{as }r\searrow 0,\]
strongly in $L^2_{\textnormal{loc}}(\R^2)$ for each  $\beta \in
[0,N)$, and
\[\frac{\psi((x^0,y^0)+r(x,y))}{\sqrt{r^{-1}\int_{\partial B_r((x^0,y^0))}
\psi^2 \dhone}} \to
\frac{\rho^{N}|\sin(N\theta)|}{\sqrt{\int_0^{2\pi}\sin^2(N\theta)
d\theta}} \quad\text{as } r\searrow 0,\] strongly in
$W^{1,2}_{\textnormal{loc}}(B_1\setminus\{0\})$ and weakly in
$W^{1,2}(B_1)$, where $(x,y)=(\rho\cos\theta, \rho\sin\theta)$.
\end{theoremb}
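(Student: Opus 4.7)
The plan is to combine three ingredients: a Weiss-type monotonicity formula adapted to the Bernoulli condition $|\nabla \psi|^2=-y$, an Almgren-type frequency formula for the degenerate regime, and a concentration-compactness argument controlling how ``bad'' stagnation points cluster. At a stagnation point $(x^0,y^0)$ I form the rescaling $\psi_r(x)=\psi((x^0,y^0)+rx)/r^{3/2}$ and build a functional $W(r)$, invariant under the scaling $\psi\mapsto r^{-3/2}\psi(r\cdot)$ that matches $|\nabla\psi|^2=-y$. Testing the weak formulation against a radial vector field and using the hypothesis $|\nabla\psi|^2\le -y$, $W$ turns out to be nondecreasing, and the defect in its derivative controls how far $\psi_r$ is from being $3/2$-homogeneous. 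Consequently, along any blow-up sequence $r_k\searrow 0$, weak $W^{1,2}_{\mathrm{loc}}(\R^2)$-limits $\psi_0$ are $3/2$-homogeneous weak solutions supported in the lower half-plane, and a classification of such profiles (Lipschitz, harmonic on $\{\psi_0>0\}$, $|\nabla\psi_0|^2=-y$ on the free boundary) yields the rigid dichotomy $\psi_0\equiv 0$ or $\psi_0$ equals the Stokes corner flow.

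Define $\Sigma=\{(x^0,y^0)\in S:W(0+)=0\}$. At points of $S\setminus\Sigma$ the Stokes corner flow is the only admissible limit, so all blow-ups coincide; upgrading from weak to strong $W^{1,2}_{\mathrm{loc}}$-convergence uses the almost-equality in the monotonicity identity for $W$, and local uniform convergence then follows from interior harmonic estimates together with the $C^{1/2}$ bound provided by $|\nabla\psi|^2\le -y$. From this point onward the argument is exactly the one of Theorem A: nondegeneracy plus the structure of the limit give convergence of $\{\psi_r>0\}$ in the symmetric-difference measure, and the two-sided $C^1$-graph description of $\partial\{\psi>0\}$ near $(x^0,y^0)$.

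At a point of $\Sigma$ the scaling $r^{3/2}$ is too strong, $\psi_r\to 0$, and I switch to the $L^2$-normalization $v_r(x)=\psi((x^0,y^0)+rx)/d(r)$ with $d(r)^2=r^{-1}\int_{\partial B_r((x^0,y^0))}\psi^2\dhone$. Since $d(r)/r^{3/2}\to 0$, the inhomogeneous Bernoulli term disappears in the limit and $v_r$ behaves asymptotically like a solution of a pure Dirichlet problem. The Almgren frequency
\[ N(r)=\frac{r\int_{B_r((x^0,y^0))}|\nabla\psi|^2}{\int_{\partial B_r((x^0,y^0))}\psi^2\dhone} \]
is then asymptotically monotone and has a limit $N\ge 2$. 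Integrality of $N$ comes from the fact that $v_r$ converges (along subsequences, then fully by rigidity) to an $N$-homogeneous harmonic function on $\R^2$ that vanishes on a set of positive measure in the lower half-plane; such functions are of the form $c\,\rho^N\sin(N\theta+\varphi)$, and the symmetry inherited in the limit from the Bernoulli equation fixes $\varphi$ so that, after normalization, the limit is $\rho^N|\sin(N\theta)|/\sqrt{\int_0^{2\pi}\sin^2(N\theta)\,d\theta}$. The statement $\psi/r^\beta\to 0$ for $\beta<N$ is then equivalent to $d(r)=o(r^\beta)$, which is built into the homogeneity of the limit.

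The main obstacle is showing $\Sigma$ is locally finite, and this is the role of the concentration-compactness step. If $\Sigma$ accumulated at a point $z^*$, one would simultaneously see, at every sufficiently small scale around $z^*$, an $N$-th order harmonic-polynomial-like profile with its prescribed $2N$-ray nodal structure \emph{and} a second, spatially disjoint packet of similar structure, violating the fixed-frequency rigidity. Quantitatively, the uniform convergence of $v_r$ to $\rho^N|\sin(N\theta)|$ at each point of $\Sigma$ gives a definite radius, depending on the frequency, in which no other $\Sigma$-point fits; a Vitali covering then forces $\Sigma$ to be locally finite. Finally, every accumulation point of the full set $S$ must lie in $\Sigma$, because at a point of $S\setminus\Sigma$ the nondegenerate Stokes profile $c\,\rho^{3/2}\max(\cos(3(\theta+\pi/2)/2),0)$ and the $C^1$-graph conclusion of Theorem A already exclude neighbouring stagnation points.
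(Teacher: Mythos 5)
Your overall architecture (Weiss monotonicity to fix the scaling $3/2$; Almgren frequency at degenerate points; compactness to handle clustering) mirrors the paper's, but there are three concrete gaps.

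\textbf{1. The definition of $\Sigma$ is wrong.} You set $\Sigma=\{(x^0,y^0)\in S: W(0+)=0\}$, but degenerate stagnation points are not the ones of zero Weiss density. In two dimensions, if the boundary density vanishes the solution vanishes identically near the point, so zero-density points are not on $\partial\{\psi>0\}$ at all. The correct characterization of the degenerate set is maximal density, $\Phi^{\textnormal{bound}}(0+)=\int_{B_1}x_2^+$: there the blow-up at scale $r^{3/2}$ is $0$ but $\chi_{\{\psi>0\}}$ fills the half-plane in measure. The three possible density values (Stokes value, maximal value, and $0$) are mutually exclusive, and only the first two occur on the free boundary; your $\Sigma$ would be empty and the rest of the proof would never see the degenerate points it is meant to study.

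\textbf{2. Strong convergence of the $L^2$-normalized blow-up sequence is not established.} You pass from weak $W^{1,2}$-limits of $v_r$ directly to ``$v_r$ behaves asymptotically like a solution of a pure Dirichlet problem'' and ``converges to an $N$-homogeneous harmonic function,'' but identifying the limit equation requires strong $W^{1,2}$-convergence (at least away from the origin). Because the sequence $v_r$ is \emph{not} bounded in Lipschitz norm (the $3/2$-Lipschitz bound coming from $|\nabla\psi|^2\le -y$ degenerates after the $d(r)$-normalization since $d(r)/r^{3/2}\to 0$), compactness here is genuinely delicate. The paper combines the Frequency Formula — which produces a quantitative control of $\nabla v_m\cdot x - F(0+)v_m$ in $L^2$ — with the Evans--M\"uller compensated compactness theorem to rule out concentration of $|\nabla v_m|^2$ and obtain strong convergence in $W^{1,2}_{\textnormal{loc}}(B_1\setminus\{0\})$. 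Your proposal invokes concentration compactness, but only as a vague device for local finiteness of $\Sigma$; the place it is actually needed is here, and the argument is not routine.

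\textbf{3. The clustering arguments are not proofs.} For $\Sigma$ locally finite you say an accumulation ``would violate fixed-frequency rigidity'' and appeal to a Vitali covering with a radius ``depending on the frequency,'' but $N$ varies from point to point and no uniform lower bound on that radius is produced. The paper instead blows up around the accumulation point at scale comparable to the separation, places the nearby degenerate point at $z$ with $|z|=1/2$, uses that the frequency inequality $F\ge 3/2$ must hold at $z$ for the rescaled solutions, and then contradicts this by computing that the limit profile $\rho^N|\sin(N\theta)|$ has frequency exactly $1$ at any $z\neq 0$. Similarly, you claim accumulation points of $S$ lie in $\Sigma$ by invoking the $C^1$-graph conclusion from Theorem A — but that conclusion requires finitely many air components, which is exactly the hypothesis dropped in Theorem B. The correct argument is a density-comparison one: if non-degenerate points $x^m\to x^0$, the blow-up of $\psi$ at $x^0$ (at scale $|x^m-x^0|$) converges to the Stokes corner flow, yet upper semicontinuity of the density would force the Stokes density to persist at the limit of $(x^m-x^0)/|x^m-x^0|$, which is a regular point of the corner flow where the density is zero — a contradiction. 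This works without any free-boundary regularity.

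Finally, a minor overclaim: Theorem B does not assert the $C^1$-graph description of $\partial\{\psi>0\}$ at nondegenerate points; that is part of Theorem A and relies on the finitely-many-components hypothesis.
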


\begin{figure}
\begin{center}
\input{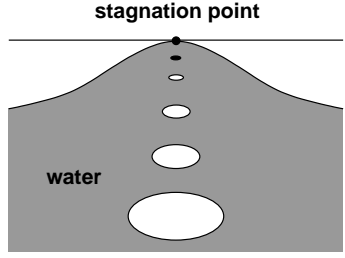}
\end{center}
\caption{A degenerate point}\label{fig1}
\end{figure}
Although the new dynamics suggested by Theorem B at degenerate
points cannot happen in the case of a finite number of air
components, there seems to be no obvious reason precluding the
scenario in Figure \ref{fig1} with an infinite number of air
components, and the situation is even less clear in the case of
inhomogeneous Neumann boundary conditions. Note that multiple air
components without surface tension have previously been considered
in \cite{gara}. It is noteworthy that while the water wave problem
has a variational structure, the solutions of interest are {\em not}
minimisers of the energy functional. Consequently, standard methods
in free-boundary problems based on non-degeneracy, which would in
the present case be the estimate \[\int_{\partial B_1((0,0))}
\frac{\psi((x^0,y^0)+r(x,y))}{{r^{3/2}}}\dhone \ge c_1
>0\quad\text{for all }r\in (0, r_0),\] do not apply.

As far as the water-wave problem is concerned, the new perspective
of our approach is that we work with the original variables $(\tilde
u,\tilde v)$ and use geometric methods as for example a {\em blow-up
analysis} in order to show that the scaled solution is close to a
homogeneous function. This part of the blow-up analysis works in $n$
dimensions and does not require {\em ad hoc} methods previously
applied to classify global solutions (see for example \cite{EV2}).
This also means that we do not require isolated singularities,
symmetry or monotonicity which had been assumed in all previous
results. Original tools in the present paper include the new {\em
Frequency Formula} (Theorem \ref{freq2}) which allows a  blow-up
analysis at degenerate points, where the scaling of the solution is
different from the invariant scaling of the equation, and leads 
in combination with the result \cite{evansmueller} by Evans and M\"uller
to
concentration compactness (Theorem \ref{deg2d}).

Large parts of the paper are written down for the non-physical but
mathematically interesting free-boundary problem in $n$ dimensions;
see for example the partial regularity result Proposition
\ref{partial} showing that non-degenerate stagnation points form a
set of dimension less or equal than $n-2$.

Our methods can still be applied when dropping the condition of
irrotationality of the flow (see \cite{EV2}, the forthcoming papers
\cite{vort} and \cite{vort2}, and \cite{CS,CS2} for a background on
water waves with vorticity).
Part of the methods extend even to water waves with surface tension
(see the forthcoming paper \cite{surf}).

It is interesting to observe that in his formal proof of the
conjecture, Stokes worked with the original variables $(\tilde
u,\tilde v)$ and approximated the velocity potential (the harmonic
conjugate of $-\psi$) by a {\em homogeneous function}. This is very
close in spirit to what we do on a rigorous level in the
Monotonicity Formula (Theorem \ref{elmon2}) and the Frequency
Formula (Theorem \ref{freq2}), so let us close our introduction with
a quotation taken from \cite[p. 226--227]{stokespaper}:

\begin{quotation}
Reduce the wave motion to steady motion by superposing a velocity
equal and opposite to that of propagation. Then a particle at the
surface may be thought of as gliding along a fixed smooth curve:
this follows directly from physical considerations, or from the
ordinary equation of steady motion. On arriving at a crest the
particle must be momentarily at rest, and on passing it must be
ultimately in the condition of a particle starting from rest down an
inclined or vertical plane. Hence the velocity must vary ultimately
as the square root of the distance from the crest.

Hitherto the motion has been rotational or not, let us now confine
ourselves to the case of irrotational motion. Place the origin at
the crest, refer the function $\phi$ to polar coordinates
$r,\theta$; $\theta$ being measured from the vertical, and consider
the value of $\phi$ very near the origin, where $\phi$ may be
supposed to vanish, as the arbitrary constant may be omitted. In
general $\phi$ will be of the form $\sum A_n r^n\sin n\theta +\sum
B_n r^n\cos n\theta$. In the present case $\phi$ must contain sines
only on account of the symmetry of the motion, as already shown (p.\
212), so that retaining only the most important term we may take
$\phi=Ar^n \sin n\theta$. Now for a point in the section of the
profile we must have $d\phi/d\theta=0$, and $d\phi/d\theta$ varying
ultimately as $r^{1/2}$. This requires $n=3/2$, and for the profile
that $3\theta /2 = \pi/2$, so that the two branches are inclined at
angles of $\pm 60^\circ$ to the vertical, and at an angle of
$120^\circ$ to each other, not of $90^\circ$ as supposed by Rankine.
\end{quotation}

\begin{acknowledgment}
We are very grateful to Stefan M\"uller, Pavel Plotnikov, John
Toland and Yoshihiro Tonegawa for helpful suggestions and
discussions.
\end{acknowledgment}

\section{Notation}
\label{notation}  We denote by $\chi_A$ the characteristic function
of the set $A$, and by $A\bigtriangleup B$ the set $(A\setminus
B)\cup(B\setminus A)$. For any real number $a$, the notation $a^+$
stands for $\max(a, 0)$. We denote by $x\cdot y$ the Euclidean inner
product in $\R^n \times \R^n$, by $\vert x \vert$ the Euclidean norm
in $\R^n$ and by $B_r(x^0):= \{x \in \R^n : \vert x-x^0 \vert < r\}$
the ball of center $x^0$ and radius $r$. We will use the notation
$B_r$ for $B_r(0)$, and denote by $\omega_n$ the $n$-dimensional
volume of $B_1$. Also, ${\mathcal L}^n$ shall denote the
$n$-dimensional Lebesgue measure and ${\mathcal H}^s$ the
$s$-dimensional Hausdorff measure. By $\nu$ we will always refer to
the outer normal on a given surface. We will use functions of
bounded variation $BV(U)$, i.e. functions $f\in L^1(U)$ for which
the distributional derivative is a vector-valued Radon measure. Here
$|\nabla f|$ denotes the total variation measure (cf. \cite{giu}).
Note that for a smooth open set $E\subset \R^n$, $|\nabla \chi_E|$
coincides with the surface measure on $\partial E$.
Last, we will use the notation $r \searrow 0$ in the meaning
of $r\to 0+$ and $r \nearrow 0$ in the meaning
of $r\to 0-$. 

\section{Notion of solution and monotonicity formula}
 Throughout the rest of the paper we work with a $n$-dimensional
generalization of the problem described in the Introduction. Let
$\Omega$ be a bounded domain in $\R^n$ which has a non-empty
intersection with the hyperplane $\{x_n=0\}$, in which to consider
the combined problem for fluid and air. We study solutions $u$, in a
sense to be specified, of the problem
\begin{align} \label{strongp}
 \Delta u &= 0 \quad\text{in } \Omega \cap \{u>0\},\\
{\vert \nabla u\vert}^2
  &=   x_n  \quad\text{on } \Omega\cap\partial
\{u>0\}.\non\end{align} (Note that, compared to the Introduction, we
have switched notation from $\psi$ to $u$ and we have ``reflected''
the problem at the hyperplane $\{x_n=0\}$.) Since our results are
completely local, we do not specify boundary conditions on
$\partial\Omega$.

We begin by introducing our notion of a {\em variational solution}
of problem (\ref{strongp}).

\begin{definition}[Variational Solution]
\label{vardef} We define $u\in W^{1,2}_{\textnormal{loc}}(\Omega)$
to be a {\em variational solution} of (\ref{strongp}) if $u\in
C^0(\Omega)\cap C^2(\Omega \cap \{ u>0\})$, $u\geq 0$ in $\Omega$
and $u\equiv 0$ in $\Omega\cap\{x_n\leq 0\}$,
 and the first variation with respect to
domain variations of the functional
\[ J(v) :=  \int_\Omega \left( {\vert \nabla v \vert}^2   +
x_n   \chi_{\{ v>0\}} \right)\] vanishes at $v=u  ,$ i.e.
\begin{align} 0 &= -\frac{d}{d\epsilon} J(u(x+\epsilon
\phi(x)))|_{\epsilon=0}  \non\\&= \int_\Omega \Big( {\vert \nabla u
\vert}^2 \div\phi-2 \nabla u D\phi\nabla u + x_n \chi_{\{
u>0\}}\div\phi + \chi_{\{ u>0\}} \phi_n\Big)\non\end{align} for any
$\phi \in C^1_0(\Omega;\R^n).$
\end{definition}

The assumption $u\in C^0(\Omega) \cap C^2(\Omega\cap \{u>0\})$ is
necessary in that it cannot be deduced from the other assumptions in
Definition \ref{vardef} by regularity theory, but it is rather mild
in the sense that it can be verified without effort for
``reasonable'' solutions, for example solutions obtained by a
diffuse interface approximation. Also we like to emphasise that
regularity properties of the free boundary, like for example finite
perimeter, are not required at all. Note for future reference that
the fact that $u$ continuous and nonnegative in $\Omega$, as well as
harmonic in $\{ u>0\}$, implies that $\Delta u$ is a nonnegative
Radon measure in $\Omega$ with support on $\Omega\cap\partial\{
u>0\}$.

We will also use {\em weak solutions} of (\ref{strongp}), i.e.
solutions in the sense of distributions. For a comparison of
variational and weak solutions see Lemma \ref{arevar}.

\begin{definition}[Weak Solution]\label{weak}
We define $u\in W^{1,2}_{\textnormal{loc}}(\Omega)$ to be a {\em
weak solution} of (\ref{strongp}) if the following are satisfied:
$u\in C^0(\Omega)$, $u\geq 0$ in $\Omega$ and $u\equiv 0$ in
$\Omega\cap\{x_n\leq 0\}$,
 $u$ is harmonic in $\{ u>0\} \cap \Omega$ and, for
every $\tau>0$, the topological free boundary $\partial \{ u>0\}
\cap \Omega \cap \{ x_n>\tau\}$ can be locally decomposed into an
$n-1$-dimensional $C^{2,\alpha}$-surface, relatively open to
$\partial \{ u>0\}$ and denoted by $\partial_{\textnormal{red}} \{
u>0\}$, and a singular set of vanishing $\mathcal{H}^{n-1}$-measure;
for an open neighborhood $V$ of each point $x^0 \in \Omega \cap \{
x_n>\tau\}$ of $\partial_{\textnormal{red}} \{ u>0\}$, $u\in
C^1(V\cap \overline{\{ u>0\}})$ satisfies \[|\nabla u(x)|^2 =
x_n\quad\text{on }V\cap
\partial_{\textnormal{red}} \{ u>0\}.\]
\end{definition}

\begin{remark}
(i) By \cite[Theorem 8.4]{AC}, the weak solutions in \cite{AC} with
$Q(x)=x_n^+$ satisfy Definition \ref{weak}.

(ii) By \cite[Theorem 3.5]{EV3}, the weak solutions in \cite{ST,EV3}
satisfy Definition \ref{weak}.
\end{remark}

\begin{lemma}\label{arevar}
  Any weak
solution of {\rm (\ref{strongp})} such that  \[|\nabla u|^2\le
Cx_n^+\quad\text{locally in }\Omega,\]
 is a variational
solution of {\rm (\ref{strongp})}. Moreover, $ \chi_{\{ u>0\}}$ is
locally in $\{x_n>0\}$ a function of bounded variation, and the
total variation measure $|\nabla \chi_{\{ u>0\}}|$ satisfies
\[r^{1/2-n}\int_{B_r(y)} \sqrt{x_n} |\nabla \chi_{\{ u>0\}}|\leq C_0\]
for all $B_r(y)\subset\subset \Omega$ such that $y_n=0$.
\end{lemma}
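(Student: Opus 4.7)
The plan is to split the lemma into two parts: the first-variation identity of Definition \ref{vardef}, and the BV bound on $\chi_{\{u>0\}}$. For the first part, I would approximate $\Omega$ by $\Omega\cap\{x_n>\tau\}$, carry out a classical divergence-theorem calculation there (where Definition \ref{weak} supplies enough regularity), and send $\tau\searrow 0$ using the growth hypothesis $|\nabla u|^2\le Cx_n^+$. For the second part, I would identify the Radon measure $\Delta u$ with a weighted surface measure on the reduced free boundary, and convert the pointwise bound $u\le Cx_n^{3/2}$ (a direct consequence of the gradient hypothesis and $u\equiv 0$ on $\{x_n\le 0\}$) into a quantitative estimate on $\Delta u(B_r(y))$, from which the BV bound follows.

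For the first-variation step, I would introduce the stress-tensor-like vector field $V_j:=|\nabla u|^2\phi_j-2(\nabla u\cdot\phi)\,u_j+x_n\chi_{\{u>0\}}\phi_j$. A direct computation, using $\Delta u=0$ in $\{u>0\}$ and $\partial_j x_n=\delta_{jn}$, gives classically in $\{u>0\}$ the identity $\div V=\chi_{\{u>0\}}\phi_n+\bigl(|\nabla u|^2\div\phi-2\nabla u\,D\phi\,\nabla u+x_n\chi_{\{u>0\}}\div\phi\bigr)$. Integrating by parts on $\{u>0\}\cap\{x_n>\tau\}$ and invoking Definition \ref{weak}, the boundary contribution splits into an integral on the $C^{2,\alpha}$-part $\partial_{\textnormal{red}}\{u>0\}$ plus an integral on a set of zero $\mathcal{H}^{n-1}$-measure, hence negligible. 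On the regular part the outward unit normal to $\{u>0\}$ is $\nu=-\nabla u/|\nabla u|$, and the integrand reduces to $(x_n-|\nabla u|^2)(\nu\cdot\phi)$, which vanishes by the Bernoulli condition. Finally, on $\Omega\cap\{0<x_n<\tau\}$ each of the four integrands in the definition of the first variation is pointwise $O(\tau)$ by $|\nabla u|^2\le Cx_n^+$, while the volume of that strip intersected with $\supp\phi$ is also $O(\tau)$, so the strip contribution disappears as $\tau\searrow 0$.

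For the BV estimate, I would first integrate $|\nabla u|\le\sqrt{Cx_n^+}$ along vertical segments starting in $\{x_n=0\}$, where $u=0$, to obtain $u\le \tilde Cx_n^{3/2}$ locally. Next, for $\phi\in C^\infty_c(\Omega\cap\{x_n>\tau\})$, Green's identity combined with $u=0$ on $\partial\{u>0\}$, harmonicity in $\{u>0\}$, and $\nabla u\cdot\nu=-|\nabla u|$ on $\partial_{\textnormal{red}}\{u>0\}$ yields $\int u\Delta\phi=\int_{\partial_{\textnormal{red}}\{u>0\}}\phi|\nabla u|\,d\mathcal{H}^{n-1}$, which identifies $\Delta u$ with $\sqrt{x_n}\,\mathcal{H}^{n-1}\lfloor\partial_{\textnormal{red}}\{u>0\}$ on $\{x_n>\tau\}$ via the Bernoulli condition. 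The $C^{2,\alpha}$-regularity from Definition \ref{weak} makes $\partial_{\textnormal{red}}\{u>0\}$ coincide, up to an $\mathcal{H}^{n-1}$-null set, with the measure-theoretic reduced boundary in $\{x_n>0\}$, so $\chi_{\{u>0\}}\in BV_{\textnormal{loc}}(\Omega\cap\{x_n>0\})$ and $|\nabla\chi_{\{u>0\}}|=\mathcal{H}^{n-1}\lfloor\partial_{\textnormal{red}}\{u>0\}$ there. Letting $\tau\searrow 0$ yields, for $y_n=0$ and $B_{2r}(y)\subset\Omega$, the identity $\int_{B_r(y)}\sqrt{x_n}\,|\nabla\chi_{\{u>0\}}|=\Delta u(B_r(y))$. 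Testing $\Delta u$ against $\eta\in C^\infty_c(B_{2r}(y))$ with $\eta\equiv 1$ on $B_r(y)$ and $|\Delta\eta|\le Cr^{-2}$ gives $\Delta u(B_r(y))\le\int u|\Delta\eta|\le Cr^{-2}\cdot\tilde C(2r)^{3/2}\cdot\omega_n(2r)^n\le C_0\,r^{n-1/2}$, which is precisely the stated bound.

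The main obstacle is the simultaneous degeneracy at $\{x_n=0\}$: the free-boundary regularity in Definition \ref{weak} is guaranteed only in $\{x_n>\tau\}$, so both the divergence-theorem computation and the identification of $\Delta u$ with a surface measure have to be carried out there and then closed up through a controlled limit $\tau\searrow 0$. The hypothesis $|\nabla u|^2\le Cx_n^+$ is precisely the quantitative ingredient that makes this limit innocuous in both parts, by killing the strip contribution in the variational identity and by providing the growth $u\le Cx_n^{3/2}$ needed to estimate $\Delta u(B_r(y))$ up to the boundary $\{x_n=0\}$.
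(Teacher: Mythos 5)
Your overall strategy matches the paper's: truncate away from $\{x_n=0\}$, run a divergence-theorem computation using the $C^{2,\alpha}$-regularity of the reduced free boundary, close off with the growth bound $|\nabla u|^2\le Cx_n^+$ as $\tau\searrow 0$, and get the BV estimate by comparing $\Delta u(B_r(y))$ with the weighted surface measure on $\partial_{\textnormal{red}}\{u>0\}$. The stress-tensor identity $\div V=\chi_{\{u>0\}}\phi_n+(|\nabla u|^2\div\phi-2\nabla u\,D\phi\,\nabla u+x_n\chi_{\{u>0\}}\div\phi)$ and the cancellation $V\cdot\nu=(x_n-|\nabla u|^2)(\phi\cdot\nu)$ on the regular boundary are correct. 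For the BV bound you test $\Delta u$ against a cutoff with $|\Delta\eta|\lesssim r^{-2}$ and invoke $u\lesssim x_n^{3/2}$, whereas the paper scales $u_r=u(y+r\,\cdot)/r^{3/2}$ and reads off $\Delta u_r(B_1)\le\int_{\partial B_1}\nabla u_r\cdot\nu\,d\mathcal{H}^{n-1}\le C_0$ directly from $|\nabla u_r|\le\sqrt{C}$; both yield the same $r^{n-1/2}$ decay.

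There is, however, one step that cannot be waved through: the sentence ``the boundary contribution splits into an integral on the $C^{2,\alpha}$-part plus an integral on a set of zero $\mathcal{H}^{n-1}$-measure, hence negligible.'' At that stage you do not know that $\{u>0\}$ has locally finite perimeter, and an open set whose topological boundary decomposes into a relatively open $C^{2,\alpha}$ surface plus an $\mathcal{H}^{n-1}$-null set does not automatically admit the Gauss--Green formula; there is no well-defined ``integral over the singular set'' to declare negligible. The paper's proof works precisely by fixing this: the singular set $\supp\phi\cap(\partial\{u>0\}\setminus\partial_{\textnormal{red}}\{u>0\})$ is covered by finitely many balls $B_{r_i}(x^i)$ with $\sum r_i^{n-1}\le\delta$, the integration by parts is performed in the complement (after a further smoothing of $\bigcup B_{r_i}$), and the two resulting error terms are controlled by $C_1\sum r_i^n$ (volume of balls) and $C_2\sum r_i^{n-1}$ (surface of balls), both of which vanish as $\delta\to 0$. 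That covering is the substantive device in this lemma, and your argument needs it (or an equivalent) to be complete. Two smaller points: the extra boundary contribution on $\{u>0\}\cap\{x_n=\tau\}$ generated by your truncation is never mentioned (it is $O(\tau)$ since $|V\cdot e_n|\lesssim\tau$ there, so harmless, but should be stated); and your claim that all four integrands of the first variation are pointwise $O(\tau)$ in the strip is false for $\chi_{\{u>0\}}\phi_n$, which is only $O(1)$ — the strip contribution is $O(\tau)$ because the volume is $O(\tau)$, not because the integrand is.
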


The proof follows \cite[Theorem 5.1]{cpde} and will be done in the
Appendix.

A first tool in our analysis is an extension of the monotonicity
formula in \cite{jga}, \cite[Theorem 3.1]{cpde} to the boundary
case. The roots of those monotonicity formulas
are harmonic mappings (\cite{schoen}, \cite{price}) and blow-up (\cite{pacard}).

\begin{theorem}[Monotonicity Formula]\label{elmon2}
Let $u$ be a variational solution of {\rm (\ref{strongp})}, let
$x^0\in\Omega$, and let
$\delta:=\textnormal{dist}(x^0,\partial\Om)/2$.

(i) Interior Case $x^0_n\ge 0$: The function \begin{align}
\Phi^{\textnormal{int}}_{x^0,u}(r) := r^{-n} \int_{B_r(x^0)} &\left(
{\vert \nabla u \vert}^2 +  x_n   \chi_{\{ u>0\} } \right)\non\\ &-
r^{-n-1} \int_{\partial B_r(x^0)} u^2 \dh, \non\end{align} defined
in $(0,\delta)$, satisfies the formula
\begin{align} \Phi^{\textnormal{int}}_{x^0,u}(\sigma)  - \Phi^{\textnormal{int}}_{x^0,u}(\varrho)  =&
\int_\varrho^\sigma r^{-n} \int_{\partial B_r(x^0)} 2 \left(\nabla u
\cdot \nu -  \frac{u}{r}\right)^2   \dh   \,dr
\non\\&+\int_\varrho^\sigma r^{-n-1}\int_{B_r(x^0)}(x_n-x_n^0)
\chi_{\{ u>0\}}\,dx\,dr,\non\end{align} for any
$0<\varrho<\sigma<\delta$.
 The absolute value of the
second term of the right-hand side is estimated by $\sigma-\varrho$
and is therefore $O(\sigma)$.

(ii) Boundary Case $x^0_n=0$: The function
\begin{align} \Phi^{\textnormal{bound}}_{x^0,u}(r) := r^{-n-1} \int_{B_r(x^0)}&\left( {\vert \nabla u
\vert}^2 +  x_n   \chi_{\{ u>0\}} \right)\non
\\ &- \frac{3}{2}   r^{-n-2} \int_{\partial
B_r(x^0)} u^2    \dh,\non\end{align} defined in $(0,\delta)$,
satisfies the formula
\[ \Phi^{\textnormal{bound}}_{x^0,u}(\sigma)  -  \Phi^{\textnormal{bound}}_{x^0,u}(\varrho)  =
\int_\varrho^\sigma r^{-n-1} \int_{\partial B_r(x^0)} 2 \left(\nabla
u \cdot \nu - \frac{3}{2}   \frac{u}{r}\right)^2   \dh   \,dr,\] for
any $0<\varrho<\sigma<\delta$.
\end{theorem}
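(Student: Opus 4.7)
The plan is to differentiate $\Phi^{\textnormal{int}}_{x^0,u}(r)$ and $\Phi^{\textnormal{bound}}_{x^0,u}(r)$ directly in $r$ and rewrite the derivative as a non-negative square plus (in the interior case) a controlled error term. Both cases follow the same recipe with different scaling exponents $\alpha$ in the square $(\nabla u \cdot \nu - \alpha u/r)^2$ ($\alpha = 1$ for the interior, $\alpha = 3/2$ for the boundary). The exponent $\alpha = 3/2$ is precisely the Stokes scaling forced by $|\nabla u|^2 = x_n$, so the boundary formula is tight, whereas the interior formula is balanced for linear functions and must absorb the defect coming from the inhomogeneity of the Bernoulli condition; this defect is exactly the error term $r^{-n-1}\int_{B_r(x^0)}(x_n - x_n^0)\chi_{\{u>0\}}$.

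First I would compute the raw derivative of $\Phi$ using $\tfrac{d}{dr}\int_{B_r(x^0)} F = \int_{\partial B_r(x^0)} F$ and, after passing to the unit sphere, $\tfrac{d}{dr}\int_{\partial B_r(x^0)} u^2 = \tfrac{n-1}{r}\int_{\partial B_r(x^0)} u^2 + 2\int_{\partial B_r(x^0)} u\,\nabla u \cdot \nu$. Two algebraic identities then let me eliminate the bulk quantities. The first is a \emph{harmonic identity}: since $u$ is harmonic in $\{u>0\}$ and vanishes on $\supp \Delta u \subset \Omega \cap \partial\{u>0\}$, the Radon measure $u\,\Delta u$ vanishes and integration by parts yields
\[
\int_{\partial B_r(x^0)} u\, \nabla u \cdot \nu \, d\mathcal{H}^{n-1} = \int_{B_r(x^0)} |\nabla u|^2\, dx \quad \text{for a.e.\ } r \in (0,\delta).
\]
The second is a \emph{Pohozaev-type identity} obtained from the first variation of $J$: inserting into Definition \ref{vardef} the test field $\phi(x) = \eta_h(|x-x^0|)(x-x^0)$, where $\eta_h$ is a smooth cutoff approximating $\chi_{[0,r]}$, and exploiting $\div \phi = n\eta_h + \eta_h'|x-x^0|$ together with $\nabla u\, D\phi\, \nabla u = \eta_h|\nabla u|^2 + \eta_h'(\nabla u \cdot (x-x^0))^2/|x-x^0|$, then passing to the limit $\eta_h' \to -\delta_r$, yields for a.e.\ $r \in (0,\delta)$
\[
\int_{B_r(x^0)}\!\bigl[(n-2)|\nabla u|^2 + ((n+1)x_n - x_n^0)\chi_{\{u>0\}}\bigr]\, dx = r\!\int_{\partial B_r(x^0)}\!\bigl[|\nabla u|^2 + x_n\chi_{\{u>0\}} - 2(\nabla u \cdot \nu)^2\bigr]\, d\mathcal{H}^{n-1}.
\]

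Substituting both identities into the raw derivative and expanding the square $(\nabla u \cdot \nu - \alpha u/r)^2$ reduces, after straightforward bookkeeping, the derivative of $\Phi^{\textnormal{int}}_{x^0,u}$ to $2r^{-n}\!\int_{\partial B_r(x^0)}(\nabla u \cdot \nu - u/r)^2\,d\mathcal{H}^{n-1} + r^{-n-1}\!\int_{B_r(x^0)}(x_n - x_n^0)\chi_{\{u>0\}}$ (the Pohozaev term contributes the $x_n^0$-error) and the derivative of $\Phi^{\textnormal{bound}}_{x^0,u}$ to $2r^{-n-1}\!\int_{\partial B_r(x^0)}(\nabla u \cdot \nu - \tfrac{3}{2}u/r)^2\,d\mathcal{H}^{n-1}$ (with $x_n^0 = 0$ the error vanishes). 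Since each piece of $\Phi$ is absolutely continuous in $r$ (being either of the form $r \mapsto \int_{B_r(x^0)} F\,dx$ with $F \in L^1_\textnormal{loc}$, or $r \mapsto r^{n-1}\int_{\partial B_1} u(x^0 + r\omega)^2\,d\omega$), these pointwise identities integrate up to the claimed integral formulas on every $(\varrho, \sigma) \subset (0,\delta)$; the bound $|x_n - x_n^0| \le r$ on $B_r(x^0)$ gives the interior error control of order $O(\sigma - \varrho)$. The main technical obstacle is that $u$ is only $W^{1,2}_{\textnormal{loc}}\cap C^0$, so $\nabla u$ need not admit classical traces on arbitrary spheres; both the harmonic and the Pohozaev identities therefore must be justified by approximating $\eta_h \to \chi_{[0,r]}$ and invoking the coarea formula and Lebesgue differentiation to select admissible radii, a standard though delicate adaptation of the Weiss monotonicity technique.
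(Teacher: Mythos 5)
Your proposal follows essentially the same route as the paper: the Pohozaev-type identity derived from the domain-variation test field $\phi_\kappa(x)=\eta_\kappa(|x-x^0|)(x-x^0)$, the integration-by-parts identity $\int_{B_r}|\nabla u|^2=\int_{\partial B_r}u\,\nabla u\cdot\nu\,d\mathcal{H}^{n-1}$ (which the paper justifies by approximating with $\max(u-\epsilon,0)^{1+\epsilon}$ rather than via the vanishing of the Radon measure $u\,\Delta u$, but the content is the same), and direct differentiation of $\Phi$ followed by substitution and completing the square, with the same $\alpha=1$ vs.\ $\alpha=3/2$ scaling dichotomy. The only difference is presentational: the paper splits $\Phi=U-W$ and treats the two pieces separately, whereas you perform the computation in one block; the algebra and all the technical ingredients coincide.
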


\begin{remark}
\label{elrem} Let us assume that $x^0=0.$ Then the integrand on the
right-hand side of the Monotonicity Formula is a scalar multiple of
$ (\nabla u(x) \cdot x - \frac{3}{2}u(x))^2$, and therefore vanishes
if and only if $u$ is a homogeneous function of degree $3/2$.
\end{remark}

\begin{proof}[Proof of Theorem \ref{elmon2}] We start with a general observation:
for any $u\in W^{1,2}_{\textnormal{loc}}(\Om)$ and $\alpha\in\R$,
the following identity holds a.e. on $(0,\delta)$, where
$w_r(x)=u(x^0+rx)$:
\begin{align}
& \frac{d}{dr} \left(r^\alpha \int_{\partial B_r(x^0)} u^2
\dh\right)
 = \frac{d}{{dr}} \left(r^{\alpha+n-1}
\int_{\partial B_1} w_r^2    \dh\right)\label{easy}\\ \non &=
(\alpha+n-1) r^{\alpha-1} \int_{\partial B_r(x^0)} u^2    \dh
+ r^{\alpha+n-1} \int_{\partial B_1} 2w_r\nabla u(x^0+rx)\cdot x   \dh\\
&= (\alpha+n-1) r^{\alpha-1} \int_{\partial B_r(x^0)} u^2    \dh +
r^{\alpha} \int_{\partial B_r(x^0)} 2u\nabla u\cdot \nu    \dh.\non
\end{align}

Suppose now that $u$ is a variational solution of (\ref{strongp}).
For small positive $\kappa$ and $\eta_\kappa(t) :=
\max(0,\min(1,\frac{r- t}{ \kappa}))$, we take after approximation
$\phi_\kappa(x) := \eta_\kappa(|x-x^0|)(x-x^0) $ as a test function
in the definition of a variational solution. We obtain
\begin{align}\non 0 =& \int_{\Om}
\left( |\nabla u |^2+x_n\chi_{\{ u>0\}}\right)\left(n\eta_\kappa(|x-x^0|)+\eta_\kappa'(|x-x^0|)|x-x^0|\right)\\
&-2\int_{\Om}  |\nabla u |^2\eta_\kappa(|x-x^0|)+\nabla u\cdot
\frac{x-x^0}{|x-x^0|}\nabla u\cdot \frac{x-x^0}{|x-x^0|}
\eta'(|x-x^0|)|x-x^0|\non\\&
+\int_{\Om}\eta_\kappa(|x-x^0|)(x_n-x^0_n) \chi_{\{
u>0\}}.\non\end{align} Passing to the limit as $\kappa\to 0$, we
obtain, for a.e. $r \in (0,\delta)$,
\begin{align} 0=& n\int_{B_r(x^0)}\Big({\vert \nabla u \vert}^2 +
x_n  \chi_{\{ u>0\}} \Big)
\label{mff2}\\
&- r \int_{\partial B_r(x^0)}   \Big({\vert \nabla u \vert}^2 +
 x_n  \chi_{\{ u>0\}} \Big)\dh\non\\ &+ 2r\int_{\partial B_r (x^0)}
(\nabla u \cdot \nu)^2\,\dh - 2\int_{B_r(x^0)} {\vert \nabla u
\vert}^2\dh\non\\&+\int_{B_r(x^0)}(x_n-x^0_n)\chi_{\{ u>0\}}.\non
\end{align}
Observe that letting $\epsilon\to 0$ in \[\int_{B_r(x^0)} \nabla u
\cdot \nabla \max(u-\epsilon,0)^{1+\epsilon} =\int_{\partial
B_r(x^0)} \max(u-\epsilon,0)^{1+\epsilon} \nabla u \cdot \nu\, \dh\]
for a.e. $r \in (0,\delta)$, we obtain the integration by parts
formula
\begin{equation}\label{part2}
 \int_{B_r(x^0)} {\vert \nabla u \vert}^2 =\int_{\partial B_r(x^0)}
u \nabla u \cdot \nu\,  \dh \end{equation} for a.e. $r \in
(0,\delta).$

Now let for all $r\in (0,\delta)$,
\begin{align}
 U_{\textnormal{int}}(r)&:= r^{-n}
\int_{B_r(x^0)} \left( {\vert \nabla u \vert}^2
 + x_n\chi_{\{ u>0\}}\right),\non\\W_{\textnormal{int}}(r)&:=  r^{-n-1}
\int_{\partial B_r(x^0)} u^2\dh,\non\end{align} so that
$\Phi^{\textnormal{int}}_{x^0,u}=U_{\textnormal{int}}-W_{\textnormal{int}}$.
Note that, for a.e. $r\in (0,\delta)$,
\begin{align}U_{\textnormal{int}}'(r)&= -nr^{-n-1}\int_{B_r(x^0)}\Big({\vert \nabla u \vert}^2
 + x_n  \chi_{\{ u>0\}} \Big)
\non\\
&\qquad+ r^{-n} \int_{\partial B_r(x^0)} \Big({\vert \nabla u
\vert}^2 +
 x_n  \chi_{\{ u>0\}}     \Big)\dh .\non\end{align}
 It follows, using (\ref{mff2}) and (\ref{part2}), that for a.e. $r\in
(0,\delta)$,
\begin{align}\label{duut2} U_{\textnormal{int}}'(r)&= 2
r^{-n}\int_{\partial B_r(x^0)} (\nabla u \cdot \nu)^2\,
\dh-2r^{-n-1}\int_{\partial B_r(x^0)} u \nabla u \cdot \nu
 \dh\\&\qquad+r^{-n-1}\int_{B_r(x^0)}(x_n-x^0_n)\chi_{\{ u>0\}}.\non\end{align}
On the other hand, plugging $\alpha:= -n-1$ into (\ref{easy}), we
obtain that for a.e. $r\in (0,\delta)$,
 \be\label{dwt2}
W_{\textnormal{int}}'(r)=2r^{-n-1}\int_{\partial B_r(x^0)} u \nabla
u\cdot\nu\dh-2r^{-n-2}\int_{\partial B_r(x^0)} u^2\dh.\ee Combining
(\ref{duut2}) and (\ref{dwt2}) yields (i).

Next, let for all $r\in (0,\delta)$, \begin{align}
U_{\textnormal{bound}}(r)&:= r^{-n-1} \int_{B_r(x^0)} \left( {\vert
\nabla u \vert}^2
 + x_n\chi_{\{ u>0\}}\right),\non\\ W_{\textnormal{bound}}(r)&:=  r^{-n-2}   \int_{\partial
B_r(x^0)} u^2\dh,\non
\end{align}
so that $\Phi^{\textnormal{bound}}_{x^0,u}=
U_{\textnormal{bound}}-3/2\, W_{\textnormal{bound}}$. Now observe
that, in the case when $x_n^0=0$, formula (\ref{mff2}) means that
\begin{align} 0= (n+1)&\int_{B_r(x^0)}\Big({\vert \nabla u \vert}^2 +
x_n  \chi_{\{ u>0\}} \Big)
\label{mff3}\\
&- r \int_{\partial B_r(x^0)}\Big({\vert \nabla u \vert}^2 +
 x_n  \chi_{\{ u>0\}} \Big)\dh\non\\ &+ 2r\int_{\partial B_r (x^0)} (\nabla
u \cdot \nu)^2\,\dh - 3\int_{B_r(x^0)} {\vert \nabla u \vert}^2.\non
\end{align}

Also, for a.e. $r\in (0,\delta)$,
\begin{align} U_{\textnormal{bound}}'(r)= -(n+1)r^{-n-2}\int_{B_r(x^0)}\Big({\vert \nabla u \vert}^2
 + x_n  \chi_{\{ u>0\}} \Big)
\non\\
+ r^{-n-1} \int_{\partial B_r(x^0)}   \Big({\vert \nabla u \vert}^2
+  x_n  \chi_{\{ u>0\}}\Big)\non\end{align} It follows, using
(\ref{mff3}) and (\ref{part2}), that for a.e. $r\in (0,\delta)$,
\begin{align}\label{duu2} U_{\textnormal{bound}}'(r)=& 2
r^{-n-1}\int_{\partial B_r(x^0)} (\nabla u \cdot \nu)^2\,
\dh\\&-3r^{-n-2}\int_{\partial B_r(x^0)} u \nabla u \cdot \nu
 \dh.\non\end{align} On the other hand,
plugging $\alpha := -n-2$ into (\ref{easy}), we obtain that for a.e.
$r\in (0,\delta)$, \be\label{dw2}
W_{\textnormal{bound}}'(r)=2r^{-n-2}\int_{\partial B_r(x^0)} u
\nabla u\cdot\nu\dh-3r^{-n-3}\int_{\partial B_r(x^0)} u^2\dh.\ee
Combining (\ref{duu2}) and (\ref{dw2}) yields (ii).
\end{proof}

\section{Densities}
\label{propdens1} From Theorem \ref{elmon2} we infer that the
functions $\Phi^{\textnormal{int}}_{x^0,u}$ and
$\Phi^{\textnormal{bound}}_{x^0,u}$ have right limits
\[\Phi^{\textnormal{int}}_{x^0,u}(0+)= \lim_{r\searrow 0}
\Phi^{\textnormal{int}}_{x^0,u}(r)\in [-\infty,\infty),\quad
\Phi^{\textnormal{bound}}_{x^0,u}(0+)= \lim_{r\searrow 0}
\Phi^{\textnormal{bound}}_{x^0,u}(r)\in [-\infty,\infty).\]
 In this
section we derive structural properties of these ``densities''
\[\Phi^{\textnormal{int}}_{x^0,u}(0+),\Phi^{\textnormal{bound}}_{x^0,u}(0+).\]
The term ``density'' is justified somewhat by Lemma \ref{density_2}
(i), (ii).

Note that most of the statements concerning
$\Phi^{\textnormal{int}}_{x^0,u}$ will not be used in subsequent
sections but serve to illustrate differences between the boundary
and interior case.

\begin{lemma}\label{density_1}
Let $u$ be a variational solution of {\rm (\ref{strongp})} and
suppose that
\[|\nabla u|^2 \le Cx_n^+\quad\text{locally in }\Omega.\]
 Then:

(i) Let $x^0\in \Om$ be such that $x^0_n>0$. Then
$\Phi^{\textnormal{int}}_{x^0,u}(0+)$ is finite if $u(x^0)=0$, and
$\Phi^{\textnormal{int}}_{x^0,u}(0+)=-\infty$ otherwise.

(ii)  Let $x^0\in \Om$ be such that $x^0_n=0$. Then
$\Phi^{\textnormal{bound}}_{x^0,u}(0+)$ is finite. (Note that $u=0$
in $\{ x_n=0\}$ by assumption.)

(iii)  Let $x^0\in \Om$ be such that $x^0_n>0$ and
 $u(x^0)=0$, and let $0<r_m\searrow 0$ as $m\to
\infty$ be a sequence such that the {\em blow-up} sequence
\[u_m(x) := {u(x^0+{r_m}x)/ {r_m}}\] converges weakly in
 $W^{1,2}_{\textnormal{loc}}(\R^{n})$ to
a blow-up limit $u_0$. Then $u_0$ is a homogeneous function of
degree $1$, i.e. $u_0(\lambda x)  = \lambda u_0(x)$.

(iv)  Let $x^0\in \Om$ be such that $x^0_n=0$, and let
$0<r_m\searrow 0$ as $m\to \infty$ be a sequence such that the {\em
blow-up} sequence
\[u_m(x) := {u(x^0+{r_m}x)/ {r_m^{3/2}}}\] converges weakly
in $W^{1,2}_{\textnormal{loc}}(\R^{n})$
 to a blow-up limit $u_0$. Then $u_0$ is a homogeneous function of
degree $3/2$, i.e. $u_0(\lambda x)  = \lambda^{3/2} u_0(x)$.

(v) Let $u_m$ be a converging sequence of (iii) or (iv). Then $u_m$
converges {\em strongly} in $W^{1,2}_{\textnormal{loc}}(\R^n)$.
\end{lemma}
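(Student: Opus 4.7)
For (i), the interior monotonicity formula (Theorem \ref{elmon2}(i)) yields that $r\mapsto\Phi^{\textnormal{int}}_{x^0,u}(r)+r$ is nondecreasing, so the right limit exists in $[-\infty,+\infty)$. Near $x^0$, the hypothesis $|\nabla u|^2\le Cx_n^+$ combined with $x_n$ being bounded shows $|\nabla u|$ is bounded, whence the volume term in $\Phi^{\textnormal{int}}_{x^0,u}$ is uniformly bounded. If $u(x^0)=0$, a Lipschitz estimate $u(x)\le L|x-x^0|$ makes $r^{-n-1}\int_{\partial B_r(x^0)}u^2\dh$ uniformly bounded and the limit finite. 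If $u(x^0)>0$, continuity forces $u\ge c>0$ nearby, so the boundary term is at least of order $c^2/r^2\to+\infty$ while the volume term stays bounded, forcing $\Phi^{\textnormal{int}}_{x^0,u}(0+)=-\infty$. For (ii) the boundary monotonicity (Theorem \ref{elmon2}(ii)) gives $\Phi^{\textnormal{bound}}_{x^0,u}$ itself nondecreasing. In $B_r(x^0)$ we have $x_n\le r$, so $|\nabla u|\le\sqrt{Cr}$, and combined with $u(x^0)=0$ this gives $u\le\sqrt{C}\,r^{3/2}$ in $B_r(x^0)$, whence $r^{-n-2}\int_{\partial B_r(x^0)}u^2\dh$ is uniformly bounded. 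The volume term is bounded similarly, and $\Phi^{\textnormal{bound}}_{x^0,u}(0+)$ is finite.

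For (iii), rescale by $u_m(x)=u(x^0+r_mx)/r_m$. A direct change of variables shows that the right-hand integrand of the Monotonicity Formula rescales to $t^{-n}\int_{\partial B_t}2(\nabla u_m\cdot\omega-u_m/t)^2\dh$. Both endpoints $\Phi^{\textnormal{int}}_{x^0,u}(r_m\rho)$ and $\Phi^{\textnormal{int}}_{x^0,u}(r_m\sigma)$ converge to the finite density $\Phi^{\textnormal{int}}_{x^0,u}(0+)$, and the error term $O(r_m\sigma)$ vanishes, so
\[
\int_\rho^\sigma t^{-n}\int_{\partial B_t}2\Bigl(\nabla u_m\cdot\omega-\frac{u_m}{t}\Bigr)^2\dh\,dt\longrightarrow 0.
\]
Weak lower semicontinuity of this convex functional in $W^{1,2}$ forces $\nabla u_0\cdot x=u_0$ a.e., i.e.\ $u_0$ is homogeneous of degree $1$. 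Part (iv) is completely parallel: with $u_m(x)=u(x^0+r_mx)/r_m^{3/2}$ and $x^0_n=0$, one verifies the identity $\Phi^{\textnormal{bound}}_{x^0,u}(r_ms)=\Phi^{\textnormal{bound}}_{0,u_m}(s)$ and that the rescaled monotonicity integrand is $t^{-n-1}\int_{\partial B_t}2(\nabla u_m\cdot\omega-3u_m/(2t))^2\dh$, leading to $\nabla u_0\cdot x=\tfrac{3}{2}u_0$ a.e.

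For (v), I use the integration by parts identity \eqref{part2}: since $\Delta u_m$ is a nonnegative Radon measure supported where $u_m=0$, one has $\int_{B_s}|\nabla u_m|^2=\int_{\partial B_s}u_m\,\nabla u_m\cdot\nu\dh$. Decomposing $\nabla u_m\cdot\nu=(\kappa/s)u_m+(\nabla u_m\cdot\omega-\kappa u_m/s)$ on $\partial B_s$, where $\kappa=1$ in (iii) and $\kappa=3/2$ in (iv), yields
\[
\int_{B_s}|\nabla u_m|^2=\frac{\kappa}{s}\int_{\partial B_s}u_m^2\dh+\int_{\partial B_s}u_m\Bigl(\nabla u_m\cdot\omega-\frac{\kappa u_m}{s}\Bigr)\dh.
\]
By Fubini applied to the vanishing integral from (iii)/(iv) and passing to a subsequence, $\int_{\partial B_s}(\nabla u_m\cdot\omega-\kappa u_m/s)^2\dh\to 0$ for a.e.\ $s$; Cauchy--Schwarz then makes the correction term vanish. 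The trace term converges by the compact embedding $W^{1,2}(B)\hookrightarrow L^2(\partial B)$. Applying the same identity to the $\kappa$-homogeneous limit $u_0$ (whose correction term vanishes exactly by Euler's relation) gives $\int_{B_s}|\nabla u_0|^2=(\kappa/s)\int_{\partial B_s}u_0^2\dh$. Therefore $\int_{B_s}|\nabla u_m|^2\to\int_{B_s}|\nabla u_0|^2$ along a subsequence for a.e.\ $s$, which, combined with weak convergence of $\nabla u_m$ to $\nabla u_0$ in $L^2$, upgrades to strong $W^{1,2}_{\textnormal{loc}}$-convergence.

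The main obstacle is justifying \eqref{part2} and the pointwise-in-$s$ control $\int_{\partial B_s}(\nabla u_m\cdot\omega-\kappa u_m/s)^2\dh\to 0$ on spheres of a.e.\ radius; both rely on a Fubini-plus-subsequence argument extracting a.e.\ pointwise vanishing from an $L^1$-in-$t$ bound. One must also check that the homogeneous blow-up limit $u_0$ inherits enough regularity (harmonicity in $\{u_0>0\}$ with $\Delta u_0$ supported in $\{u_0=0\}$, obtained by passing to the limit in the weak harmonicity of $u_m$) for the identity \eqref{part2} to apply to $u_0$, since its free boundary is a priori unknown.
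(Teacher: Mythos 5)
Your parts (i)--(iv) follow the paper's own route quite closely: the finiteness/divergence dichotomy in (i)--(ii) from the growth bound $|\nabla u|^2\le Cx_n^+$ (which gives a local Lipschitz estimate $u(x)\le L|x-x^0|$ in the interior case and $u(x)\le \sqrt{C}\,|x-x^0|^{3/2}$ at the boundary), and homogeneity of blow-up limits in (iii)--(iv) from the vanishing of the dissipation term in the Monotonicity Formula. The paper is terse about how the vanishing of $\int_\varrho^\sigma\int_{\partial B_r}(\nabla u_m\cdot x-\alpha u_m)^2$ passes to the weak limit; your explicit appeal to weak lower semicontinuity of the quadratic form (equivalently, rewriting it as $\int_{B_\sigma\setminus B_\varrho}2|x|^{-n-2}(\nabla v\cdot x-\alpha v)^2$, with bounded weight on the annulus) fills that in correctly.

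Part (v) is where you genuinely diverge. The paper follows Caffarelli--Jerison--Kenig: for any $\eta\in C^1_0$, the identity $\int|\nabla u_m|^2\eta=-\int u_m\nabla u_m\cdot\nabla\eta$ (which holds because $\Delta u_m$ is a nonnegative measure carried by $\{u_m=0\}$) passes to the limit directly using uniform convergence of $u_m$ together with weak $L^2$-convergence of $\nabla u_m$, and the same identity holds for $u_0$ because $u_0$ is continuous, nonnegative, and harmonic in $\{u_0>0\}$. No homogeneity of $u_0$, no Monotonicity Formula, and no a.e.-sphere extraction are needed. Your argument instead uses the sphere identity \eqref{part2}, decomposes $\nabla u_m\cdot\nu$ into a radial-homogeneous part plus a correction, and kills the correction with the $L^1$-in-$r$ vanishing from (iii)/(iv) via Fubini and a subsequence, followed by the Euler relation for the homogeneous $u_0$. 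This is correct, but observe that you need to close the argument for the full sequence, not just a subsequence: after obtaining strong convergence along the subsequence you extract, you must invoke the Urysohn subsequence principle (every subsequence has a further strongly convergent sub-subsequence with the same limit, since the weak limit $u_0$ is fixed). You also tie (v) to the homogeneity established in (iii)/(iv); the paper's proof of (v) is logically independent of (iii)/(iv) and is therefore somewhat cleaner. Both routes rely on the same structural fact you flag at the end --- that $\Delta u_0$ is a nonnegative measure supported in $\{u_0=0\}$, inherited from $u_m$ via uniform convergence --- so your concern there is apt, and the paper addresses it in exactly the way you suggest.

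One small imprecision: in (i) you say $r\mapsto\Phi^{\textnormal{int}}_{x^0,u}(r)+r$ is nondecreasing; the correction term is bounded by a dimensional constant times $\sigma-\varrho$, so the correct statement is that $\Phi^{\textnormal{int}}_{x^0,u}(r)+C_n r$ is nondecreasing. This does not affect the existence or finiteness of the right limit.
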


\begin{proof} (i),(ii): If $u(x^0)=0$, the finiteness claims follow directly
from the growth assumption $|\nabla u|^2 \le Cx_n^+$. If $x^0_n>0$
and $u(x^0)>0$, then, since $\vert\nabla u\vert^2\le Cx_n^+$ by
assumption, we obtain that $\Phi^{\textnormal{int}}_{x^0,u}(r) \le
C_1 - C_2 r^{-2}$ for $r \le r_0 ,$ implying that
$\Phi^{\textnormal{int}}_{x^0,u}(0+) = -\infty .$

(iii),(iv): For each $0<\sigma<\infty$ the sequence $u_m$ is by
assumption bounded in $C^{0,1}(B_\sigma)) .$ From the Monotonicity
Formula (Theorem \ref{elmon2}) we infer therefore, setting
$\alpha=1$ in the interior case and $\alpha=3/2$ in the boundary
case, that for all $0<\varrho<\sigma<\infty$,
\[ \int_\varrho^\sigma
 \int_{\partial B_r}\left(\nabla u_m(x) \cdot x -  \alpha
u_m(x)\right)^2\dh\,dr\to 0\quad\text{as }m\to\infty,
\] which yields the desired homogeneity of $u_0 .$

(v): The proof follows \cite[Lemma 7.2]{cava}. In order to show
strong convergence of $u_m$ in $W^{1,2}_{\textnormal{loc}}(\R^n)$,
it is sufficient, in view of the weak $L^2$-convergence of $\nabla
u_m$, to show that \[\limsup_{m\to \infty}\int_{\R^n} |\nabla u_m|^2
\eta\le \int_{\R^n} |\nabla u_0|^2 \eta\] for each $\eta\in
C^1_0(\R^n)$. Using the uniform convergence, the continuity of
$u_0$, as well as the fact that $u_0$ is harmonic in $\{ u_0>0\}$,
we obtain as in the proof of (\ref{part2}) that \[\int_{\R^n}
|\nabla u_m|^2\eta =-\int_{\R^n} u_m \nabla u_m\cdot \nabla \eta \to
-\int_{\R^n} u_0 \nabla u_0\cdot \nabla \eta =\int_{\R^n} |\nabla
u_0|^2\eta\] as $m\to \infty$. It follows that $u_m$ converges to
$u_0$ strongly in $W^{1,2}_{\textnormal{loc}}(\R^n)$ as
$m\to\infty$. \end{proof}

\begin{lemma}\label{density_2}
Let $u$ be a variational solution of {\rm (\ref{strongp})} and
suppose that
\[|\nabla u|^2 \le Cx_n^+\quad\text{locally in }\Omega.\] Then:

(i)  Let $x^0\in \Om$ be such that $x^0_n>0$ and
 $u(x^0)=0$. Then
\[\Phi^{\textnormal{int}}_{x^0,u}(0+)  =  x^0_n\lim_{r\searrow 0} r^{-n}
\int_{B_r(x^0)}\chi_{\{ u>0\}},\] and in particular
$\Phi^{\textnormal{int}}_{x^0,u}(0+) \in [0,+\infty)$.
 Moreover,
 $\Phi^{\textnormal{int}}_{x^0,u}(0+)=0$ implies that $u_0=0$ in $\R^n$ for
each blow-up limit $u_0$ of Lemma \ref{density_1} (iii).

(ii)  Let $x^0\in \Om$ be such that $x^0_n=0$. Then
 \[\Phi^{\textnormal{bound}}_{x^0,u}(0+)  =
\lim_{r\searrow 0} r^{-n-1} \int_{B_r(x^0)} x_n^+\chi_{\{ u>0\}},\]
and in particular $\Phi^{\textnormal{bound}}_{x^0,u}(0+)\in
[0,+\infty)$. Moreover, $\Phi^{\textnormal{bound}}_{x^0,u}(0+)=0$
implies that $u_0=0$ in $\R^n$ for each blow-up limit $u_0$ of Lemma
\ref{density_1} (iv).

(iii) The function $x\mapsto\Phi^{\textnormal{int}}_{x,u}(0+)$ is
upper semicontinuous in  $\{ x_n>0\}$.

(iv) The function $x\mapsto \Phi^{\textnormal{bound}}_{x,u}(0+)$ is
upper semicontinuous in $\{ x_n=0\}$.

(v) Let $u_m$ be a sequence of variational solutions of
{\rm(\ref{strongp})} which converges strongly to $u_0$ in
$W^{1,2}_{\textnormal{loc}}(\R^n)$ and such that $\chi_{\{u_m>0\}}$
converges weakly in $L^2_{\textnormal{loc}}(\R^n)$ to $\chi_0$. Then
$u_0$ is a variational solution of {\rm (\ref{strongp})} and
satisfies the Monotonicity Formula, but with $\chi_{\{u_0>0\}}$
replaced by $\chi_0$. Moreover, for each $x^0\in \Om$,
and all instances of $\chi_{\{u_0>0\}}$
replaced by $\chi_0$,
\[\Phi^{\textnormal{int}}_{x^0,u_0}(0+)\geq\limsup_{m\to\infty}\Phi^{\textnormal{int}}_{x^0,u_m}(0+)\]
in the interior case $x^0_n>0$, and
\[\Phi^{\textnormal{int}}_{x^0,u_0}(0+)\geq\limsup_{m\to\infty}\Phi^{\textnormal{int}}_{x^0,u_m}(0+)\]
in the boundary case $x^0_n=0$.
\end{lemma}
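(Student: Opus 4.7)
My plan for parts (i) and (ii) is to rescale and read off the limit of $\Phi$ directly from the blow-up analysis of Lemma \ref{density_1}. For a sequence $r_m\searrow 0$ with the rescaled functions $u_m(x)=u(x^0+r_mx)/r_m$ (interior) or $u(x^0+r_mx)/r_m^{3/2}$ (boundary), a change of variables recasts $\Phi^{\textnormal{int}}_{x^0,u}(r_m)$ as
\[\int_{B_1}|\nabla u_m|^2+\int_{B_1}(x^0_n+r_m x_n)\chi_{\{u_m>0\}}-\int_{\partial B_1}u_m^2\dh,\]
and $\Phi^{\textnormal{bound}}_{x^0,u}(r_m)$ as
\[\int_{B_1}|\nabla u_m|^2+\int_{B_1}x_n\chi_{\{u_m>0\}}-\tfrac{3}{2}\int_{\partial B_1}u_m^2\dh.\]
The bound $|\nabla u|^2\le Cx_n^+$ transfers to a uniform Lipschitz bound on $u_m$ (on compacts), so along a subsequence $u_m\to u_0$ strongly in $W^{1,2}_{\textnormal{loc}}$ (Lemma \ref{density_1} (v)) and locally uniformly, while $\chi_{\{u_m>0\}}$ converges weakly in $L^2_{\textnormal{loc}}$ to some $\chi_0\in[0,1]$. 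Using the homogeneity $\nabla u_0\cdot x=\alpha u_0$ with $\alpha=1$ or $3/2$, and the fact that $\Delta u_0$ is a nonnegative measure supported on $\{u_0=0\}$, integration by parts gives $\int_{B_1}|\nabla u_0|^2=\alpha\int_{\partial B_1}u_0^2\dh$; this cancels the $|\nabla u_0|^2$ and boundary terms, leaving $\Phi^{\textnormal{int}}_{x^0,u}(0+)=x^0_n\int_{B_1}\chi_0$ and $\Phi^{\textnormal{bound}}_{x^0,u}(0+)=\int_{B_1}x_n\chi_0$. Since the left-hand sides exist by Theorem \ref{elmon2} independently of the subsequence, the corresponding limits of $r^{-n}\int_{B_r(x^0)}\chi_{\{u>0\}}$ and $r^{-n-1}\int_{B_r(x^0)}x_n^+\chi_{\{u>0\}}$ exist, giving the representation formulas (and the bound $x_n\chi_{\{u>0\}}=x_n^+\chi_{\{u>0\}}$ follows from $u\equiv 0$ on $\{x_n\le 0\}$).

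For the "$\Phi(0+)=0\Rightarrow u_0=0$" assertions, vanishing of the density forces $\chi_0=0$ a.e. in $B_1$ (respectively on $B_1\cap\{x_n>0\}$, combined with $\chi_0=0$ elsewhere). Combined with the nonnegativity and uniform convergence of $u_m$ to $u_0$, if $u_0$ were strictly positive at some point then $\mathcal{L}^n(\{u_m>0\}\cap B_1)$ would have a positive lower bound, contradicting the weak limit. Hence $u_0\equiv 0$.

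For parts (iii) and (iv) I will combine the monotonicity of $\Phi$ with continuity of its finite-radius value in the base point. Pick any $y\to x^0$; in the boundary case the formula of Theorem \ref{elmon2} gives directly $\Phi^{\textnormal{bound}}_{y,u}(0+)\le \Phi^{\textnormal{bound}}_{y,u}(\sigma)$, and in the interior case $\Phi^{\textnormal{int}}_{y,u}(0+)\le\Phi^{\textnormal{int}}_{y,u}(\sigma)+O(\sigma)$. For $\sigma$ chosen outside the exceptional set where $\mathcal{H}^{n-1}(\partial B_\sigma(x^0)\cap\partial\{u>0\})>0$ or where $u$ fails to be regular on $\partial B_\sigma(x^0)$ (a.e.\ $\sigma$ will do by Fubini), $\Phi_{y,u}(\sigma)\to \Phi_{x^0,u}(\sigma)$ by dominated convergence. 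Taking $\limsup$ in $y$ then $\sigma\searrow 0$ gives the desired upper semicontinuity.

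For part (v) I pass to the limit in the definition of variational solution. Strong $W^{1,2}_{\textnormal{loc}}$-convergence handles the quadratic gradient terms $|\nabla u_m|^2\,\textnormal{div}\phi$ and $\nabla u_m\,D\phi\,\nabla u_m$, while weak $L^2_{\textnormal{loc}}$-convergence of $\chi_{\{u_m>0\}}$ to $\chi_0$ handles the linear terms $x_n\chi_{\{u_m>0\}}\textnormal{div}\phi$ and $\chi_{\{u_m>0\}}\phi_n$. The resulting identity is the modified variational identity with $\chi_0$ in place of $\chi_{\{u_0>0\}}$, and the proof of Theorem \ref{elmon2} uses only this identity and integration by parts, so it yields the monotonicity formulas for $u_0$ with $\chi_0$ in place of $\chi_{\{u_0>0\}}$. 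The density upper semicontinuity statement then follows by the same argument as in (iii)/(iv): compare $\Phi^{\cdot}_{x^0,u_m}(0+)$ with $\Phi^{\cdot}_{x^0,u_m}(\sigma)$ via monotonicity, send $m\to\infty$ at fixed $\sigma$ to obtain $\Phi^{\cdot}_{x^0,u_0}(\sigma)$ (in the modified sense), and finally send $\sigma\searrow 0$. The main technical point to watch is that in (v) one cannot replace $\chi_0$ by $\chi_{\{u_0>0\}}$, since weak $L^2$-limits of characteristic functions need not be characteristic functions; the whole argument is designed to respect this.
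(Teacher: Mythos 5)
Your proposal is correct and follows essentially the same line of argument as the paper: rescaling $\Phi$ in terms of the blow-up sequence $u_m$, using strong $W^{1,2}_{\textnormal{loc}}$-convergence, trace convergence, and the homogeneity identity $\int_{B_1}|\nabla u_0|^2=\alpha\int_{\partial B_1}u_0^2\dh$ to cancel the Dirichlet and boundary terms and isolate the density limit, then the monotonicity comparison $\Phi_{x,u}(0+)\le\Phi_{x,u}(r)\le\Phi_{x^0,u}(r)+\delta/2\le\Phi_{x^0,u}(0+)+\delta$ for (iii)--(v). Your extra care about continuity of $\Phi_{y,u}(\sigma)$ in the base point via dominated convergence (and the remark that $\chi_0$ need not be a characteristic function) fills in a step the paper leaves implicit, but the approach is the same.
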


\begin{proof} (i),(ii): Let us take a sequence $r_m\searrow 0$ such that $u_m$
defined in Lemma \ref{density_1} (iii), (iv) converges weakly in
$W^{1,2}_{\textnormal{loc}}(\R^n)$ to a function $u_0 .$ Using Lemma
\ref{density_1} (v) and the homogeneity of $u_0$,  in the interior
case we obtain that
\begin{align} \lim_{m\to \infty} \Phi^{\textnormal{int}}_{x^0,u}(r_m)& =
\int_{B_1} |\nabla u_0|^2 - \int_{\partial B_1} u_0^2 \dh +
x^0_n\lim_{r\searrow 0} r^{-n}
\int_{B_r(x^0)}\chi_{\{ u>0\}}\non\\
 &= x^0_n\lim_{r\searrow 0} r^{-n} \int_{B_r(x^0)}\chi_{\{ u>0\}},\non\end{align}
(the limit here exists because $\lim_{r\searrow 0}
\Phi^{\textnormal{int}}_{x^0,u}(r)$ exists), while in the boundary
case we obtain that
\begin{align} \lim_{m\to \infty} \Phi^{\textnormal{bound}}_{x^0,u}(r_m)
 &= \int_{B_1} |\nabla u_0|^2 - \frac{3}{2}\int_{\partial B_1} u_0^2
\dh + \lim_{r\searrow 0} r^{-n-1} \int_{B_r(x^0)} x_n^+\chi_{\{
u>0\}}\non\\
& =  \lim_{r\searrow 0} r^{-n-1} \int_{B_r(x^0)} x_n^+\chi_{\{
u>0\}}\non.\end{align}
  Thus $\Phi^{\textnormal{int}}_{x^0,u}(0+)\ge 0$ in the
interior case, $\Phi^{\textnormal{bound}}_{x^0,u}(0+)\ge 0$ in the
boundary case, and equality in either case implies that for each
$\tau>0$, $u_m$ converges to $0$ in measure in the set $\{x_n
>\tau\}$ as $ m \to \infty $, and consequently $u_0=0$ in $\R^n$.

(iii),(iv): For each $\delta>0$ and $K<+\infty$ we obtain from the
Monotonicity Formula (Theorem \ref{elmon2}) that in the interior
case
\begin{align} \Phi^{\textnormal{int}}_{x,u}(0+) & \le  \Phi^{\textnormal{int}}_{x,u}(r)  \le
 \Phi^{\textnormal{int}}_{x^0,u}(r)+\frac{\delta}{2}\non\\ &\le \left\{\begin{array}{ll}
\Phi^{\textnormal{int}}_{x^0,u}(0+)+\delta&, \text{ if } \Phi^{\textnormal{int}}_{x^0,u}(0+)>-\infty,\\
-K &, \text{ if }
\Phi^{\textnormal{int}}_{x^0,u}(0+)=-\infty,\end{array}\right.\non\end{align}
 and in
the boundary case
\[ \Phi^{\textnormal{bound}}_{x,u}(0+)  \le  \Phi^{\textnormal{bound}}_{x,u}(r)  \le
 \Phi^{\textnormal{bound}}_{x^0,u}(r)+ \frac{\delta}{2}\leq
 \Phi^{\textnormal{bound}}_{x^0,u}(0+)+\delta,\] if we choose for fixed
$x^0$ first $r>0$ and then $\vert x-x^0\vert$ small enough.
\end{proof}

(v) The fact that $u_0$ is a variational solution of (\ref{strongp})
and satisfies the Monotonicity Formula in the sense indicated
follows directly from the convergence assumption. The proof for the
rest of the claim follows by the same argument as in (iii), (iv).

\begin{lemma}\label{density_3}
Let $u$ be a variational solution of {\rm (\ref{strongp})} and
suppose that
\[|\nabla u|^2 \le Cx_n^+\quad\text{locally in }\Omega.\]
 Then
$\Phi^{\textnormal{int}}_{x^0,u}(0+)=0$ implies that $u\equiv 0$ in
some open $n$-dimensional ball containing $x^0$.
\end{lemma}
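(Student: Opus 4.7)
My plan is as follows. The case $x^0_n\le 0$ is immediate because $u\equiv 0$ on $\Omega\cap\{x_n\le 0\}$ by hypothesis, so I may assume $x^0_n>0$. Since $\Phi^{\textnormal{int}}_{x^0,u}(0+)=0$ is finite, Lemma \ref{density_1}(i) forces $u(x^0)=0$. Lemma \ref{density_2}(i) then gives on the one hand that $\{u>0\}$ has zero Lebesgue density at $x^0$, i.e.\ $D(r):=|B_r(x^0)\cap\{u>0\}|/|B_r|\to 0$, and on the other hand that every blow-up limit $u_0$ produced by Lemma \ref{density_1}(iii) vanishes identically. Because the hypothesis $|\nabla u|^2\le Cx_n^+$ makes $u$ locally Lipschitz near $x^0$, the rescalings $u_r(z):=u(x^0+rz)/r$ are uniformly Lipschitz on $B_1$; Arzel\`a--Ascoli combined with the uniqueness of the (zero) limit then forces $u_r\to 0$ uniformly on $B_1$ as $r\searrow 0$, so $S(r):=\sup_{B_r(x^0)}u$ satisfies $S(r)/r\to 0$.

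Next I would iterate a sub-mean-value inequality. Since $u$ is continuous, nonnegative, and harmonic on $\{u>0\}$, the distributional Laplacian $\Delta u$ is a nonnegative Radon measure concentrated on $\partial\{u>0\}$, so $u$ is subharmonic near $x^0$. For any $y\in B_{r/2}(x^0)$,
\[
u(y)\le \frac{1}{|B_{r/2}|}\int_{B_{r/2}(y)\cap\{u>0\}} u \le \frac{S(r)\,|B_r(x^0)\cap\{u>0\}|}{|B_{r/2}|}=2^n S(r)D(r).
\]
Taking the supremum over such $y$ yields $S(r/2)\le 2^n S(r)D(r)$. Given $q\in(0,1)$, one fixes $r_q>0$ small enough that $2^nD(r)\le q$ for every $r\in(0,r_q]$; dyadic iteration then produces $S(\rho)\le C(q)\,\rho^{-\log_2 q}$ for $\rho\in(0,r_q]$. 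Since $q$ is arbitrary, this gives the super-polynomial decay $S(\rho)=O(\rho^N)$ as $\rho\searrow 0$ for every $N\in\N$.

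The final step is to conclude from this super-polynomial decay that $x^0\notin\overline{\{u>0\}}$, which is exactly the statement that $u$ vanishes identically on some open ball about $x^0$. I would argue by contradiction: assuming $x^0\in\partial\{u>0\}$, a sequence $y_m\in\{u>0\}$ with $y_m\to x^0$ can be chosen, and the connected component $V_m$ of $\{u>0\}$ containing $y_m$ cannot be compactly contained in $\Omega$ (otherwise the maximum principle would give $u\equiv 0$ on $V_m$), so $V_m$ must exit every small ball about $x^0$. On the other hand, the variational formulation of the Bernoulli condition $|\nabla u|^2=x_n$ supplies a quantitative lower bound on $|\nabla u|$ along $\partial V_m\cap\{x_n\ge x^0_n/2\}$; combined with the harmonicity of $u$ in $V_m$ and the Lipschitz estimate, this should produce a non-degeneracy bound $\sup_{B_r(x^0)}u\ge c\,r$, contradicting $S(r)=O(r^N)$ for $N\ge 2$.

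The principal obstacle is precisely this last step: extracting a non-degeneracy estimate at $x^0$ from the weak Bernoulli condition alone, since the classical Alt--Caffarelli non-degeneracy argument is built on energy minimality, which is not assumed here. One has to exploit the first-variation identity together with the strict positivity of $x_n$ in a neighborhood of $x^0$ to rule out a thin cusp of $\{u>0\}$ accumulating at $x^0$.
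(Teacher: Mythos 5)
Your iteration argument is sound as far as it goes: the sub-mean-value inequality $S(r/2)\le 2^n S(r)D(r)$, combined with $D(r)\to 0$ from Lemma \ref{density_2}(i), does yield super-polynomial decay $S(\rho)=O(\rho^N)$ for every $N$. The problem is that this decay does \emph{not} imply $x^0\notin\overline{\{u>0\}}$ on its own, and the non-degeneracy bound $\sup_{B_r(x^0)}u\ge c\,r$ that you would need to close the argument is not available here. Variational solutions only satisfy the Bernoulli condition in a distributional (first-variation) sense, and there is no pointwise gradient information along $\partial V_m$; even the weak-solution condition holds only on the reduced boundary, which could have vanishingly small measure near $x^0$. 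The Alt--Caffarelli non-degeneracy estimate is derived from minimality, which is not assumed, and the paper explicitly warns in the introduction that this is a genuine obstruction: ``standard methods in free-boundary problems based on non-degeneracy \ldots\ do not apply.'' So the final step of your plan cannot be carried out as sketched, and you correctly identify it as the principal obstacle; unfortunately it is fatal to this approach.

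The paper's proof circumvents non-degeneracy entirely and is much shorter. It works with the density function $x\mapsto\Phi^{\textnormal{int}}_{x,u}(0+)$ rather than with pointwise decay of $u$. By the upper semicontinuity established in Lemma \ref{density_2}(iii), $\Phi^{\textnormal{int}}_{x,u}(0+)\le\epsilon$ for all $x$ in a small ball $B_\delta(x^0)$ with $\delta<x^0_n$. If $u\not\equiv 0$ there, pick an inscribed ball $A\subset\{u>0\}\cap B_\delta(x^0)$ that touches $\{u=0\}$ at some point $z$. Because $A$ is tangent to $\{u=0\}$ at $z$ from inside $\{u>0\}$, the Lebesgue density of $\{u>0\}$ at $z$ is at least $1/2$, hence by Lemma \ref{density_2}(i)
\[
\Phi^{\textnormal{int}}_{z,u}(0+)=z_n\lim_{r\searrow 0}r^{-n}\int_{B_r(z)}\chi_{\{u>0\}}\ge z_n\frac{\omega_n}{2}>0,
\]
contradicting $\Phi^{\textnormal{int}}_{z,u}(0+)\le\epsilon$ once $\epsilon$ is chosen small. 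The point is that the density $\Phi^{\textnormal{int}}$ is both upper semicontinuous \emph{and} bounded below away from zero at any point where a ball touches the zero set from the positivity side; that tension is exploited directly, with no need for a growth or non-degeneracy estimate on $u$ itself. You may want to keep this touching-ball/density mechanism in mind: it is a standard and robust substitute for non-degeneracy in settings where minimality is absent.
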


\begin{proof} By the upper semicontinuity Lemma \ref{density_2} (iii), $\Phi^{\textnormal{int}}_{x,u}(0+)\le
\epsilon$ in $B_\delta(x^0)\subset \Omega$ for some
$\delta\in(0,x^0_n)$. Suppose towards a contradiction that
$u\not\equiv 0$ in $B_\delta(x^0)$. Then there exist a ball
$A\subset \{ u>0\}\cap B_\delta(x^0)$ and $z\in \partial A \cap \{
u=0\}$. It follows that \[\Phi^{\textnormal{int}}_{z,u}(0+) =  z_n
\lim_{r\searrow 0}r^{-n} \int_{B_r(z)} \chi_{\{ u>0\}} \ge  z_n
\frac{\omega_n}{2},\] a contradiction for sufficiently small
$\epsilon$.\end{proof}

Unfortunately, a boundary version of Lemma \ref{density_3}, stating
that boundary density $0$ at $x^0$ implies the solution being $0$ in
an open $n$-dimensional ball with center $x^0$, cannot be obtained
in the same way. Instead we prove in the two-dimensional case the
following result.

\begin{lemma}\label{zero}
Let $n=2$, let $u$ be a weak solution of {\rm(\ref{strongp})} and
suppose that
\[|\nabla u|^2 \leq x_2^+\quad\text{in }\Omega.\]
 Then $\Phi^{\textnormal{bound}}_{x^0,u}(0+)=0$ implies that $u\equiv 0$ in some
open $2$-dimensional ball containing $x^0$.
\end{lemma}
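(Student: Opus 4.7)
I will argue by contradiction: suppose $\Phi^{\textnormal{bound}}_{x^0,u}(0+)=0$ yet $x^0\in\overline{\{u>0\}}$. Since $u\equiv 0$ on $\{x_2\leq 0\}$, there exists a sequence $y^m\in\{u>0\}$ with $y^m\to x^0$ and $y^m_2>0$. The first task is to collect the consequences of the vanishing density. By Lemma~\ref{density_2}(ii) combined with Lemma~\ref{density_1}(iv)(v), every blow-up $u_m(x):=u(x^0+r_m x)/r_m^{3/2}$, $r_m\searrow 0$, converges strongly in $W^{1,2}_{\textnormal{loc}}(\R^2)$ to the trivial limit $u_0\equiv 0$. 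The bound $|\nabla u_m|^2\leq x_2^+$ yields both $u_m(x)\leq \tfrac{2}{3}(x_2^+)^{3/2}$ (by integration from the axis) and equi-Lipschitz estimates on compacta of $\{x_2>0\}$, upgrading this to uniform convergence $u_m\to 0$ on every bounded subset of $\R^2$. Moreover, Lemma~\ref{arevar} provides a uniform weighted BV bound on $\chi_{\{u_m>0\}}$ on compacta of $\{x_2>0\}$, so BV-compactness gives $\chi_{\{u_m>0\}}\to 0$ in $L^1_{\textnormal{loc}}(\{x_2>0\})$ along a subsequence.

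Next I would choose $r_m$ and a subsequence so that the approach is non-tangential: $w^m:=(y^m-x^0)/r_m\in\partial B_1$ with $w^m_2\geq \tau_0>0$ uniformly in $m$. Let $C_m$ be the connected component of $\{u_m>0\}$ containing $w^m$, and let $\rho_m\leq w^m_2$ denote the radius of the largest open ball in $C_m$ centered at $w^m$. The following dichotomy yields the contradiction: if $\rho_m\geq w^m_2/2$, then $B_{w^m_2/2}(w^m)\subset C_m$ contributes area $\geq c\tau_0^2$ to $C_m\cap K$ for the fixed compact set $K:=\overline{B_2}\cap\{x_2\geq \tau_0/2\}$, contradicting $|C_m\cap K|\to 0$. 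Otherwise there exists $z^m\in\partial C_m\cap\overline{B_{w^m_2/2}(w^m)}$ with $z^m_2\geq w^m_2-\rho_m\geq \tau_0/2$ and $|z^m|\leq 2$; after a slight perturbation to place $z^m$ in the reduced free boundary of Definition~\ref{weak}, the Bernoulli condition gives $|\nabla u_m(z^m)|^2=z^m_2\geq \tau_0/2$, while the standard boundary gradient estimate for harmonic functions vanishing on a $C^{2,\alpha}$ portion of the boundary yields $|\nabla u_m(z^m)|\leq Cs^{-1}\sup_{B_s(z^m)\cap C_m}u_m$. Choosing $s=\tau_0/8$ places $B_s(z^m)$ in a compact subset of $\{x_2>0\}$ on which $u_m\to 0$ uniformly, forcing $\sqrt{\tau_0/2}\leq o(1)$, the desired contradiction.

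The main obstacle is the non-tangential reduction $w^m_2\geq \tau_0$. A priori the points $y^m$ could accumulate at $x^0$ only tangentially to the axis, confining $\{u>0\}$ to a cusp $\{y_2\leq \epsilon|y-x^0|\}$ at $x^0$ for every $\epsilon>0$. I would handle this cuspidal regime by applying Lemma~\ref{density_3} to interior points $\hat y=x^0+(0,t)$ directly above $x^0$, which lie outside the cusp and therefore have vanishing interior density of $\{u>0\}$, so $\hat y\in\{u=0\}^\circ$; the corresponding family of balls in $\{u=0\}^\circ$ then exhausts a wide cone around $x^0$. Combined with the integration-by-parts identity $\int_C |\nabla u|^2=\int_{\partial C}u\,\partial_\nu u=0$, which forces any bounded component $C$ of $\{u>0\}$ compactly contained in $\Omega$ with $u=0$ on $\partial C$ to be empty, this should rule out the cuspidal configuration and reduce matters to the non-tangential situation treated above.
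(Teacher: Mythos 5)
Your first step is correct and matches the paper: the vanishing density forces every blow-up $u_m=u(x^0+r_m\cdot)/r_m^{3/2}$ to converge to $u_0\equiv 0$ strongly in $W^{1,2}_{\textnormal{loc}}$, locally uniformly, and $\chi_{\{u_m>0\}}\to 0$ in measure (also $\Delta u_m(B_2)\to 0$). After that, however, your argument has a genuine gap centered on the non-tangential reduction, which you yourself flag as the main obstacle but then do not close.

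The cuspidal fallback is flawed in two places. First, you want to apply Lemma~\ref{density_3} at $\hat y = x^0+(0,t)$ to conclude $\hat y\in\{u=0\}^\circ$, on the grounds that $\hat y$ ``lies outside the cusp.'' But the cuspidal confinement only describes $\{u>0\}$ \emph{near} $x^0$; Lemma~\ref{density_3} needs $\Phi^{\textnormal{int}}_{\hat y,u}(0+)=0$, i.e.\ vanishing Lebesgue density of $\{u>0\}$ at $\hat y$ itself, and nothing in the hypothesis controls $\{u>0\}$ near a fixed interior point $\hat y\neq x^0$ — the positivity set could have full density there coming from parts of the fluid region that have nothing to do with the cusp at $x^0$. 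Second, your integration-by-parts identity $\int_C|\nabla u|^2 =\int_{\partial C}u\,\partial_\nu u=0$ only rules out components $C$ \emph{compactly contained in} $\Omega$; the component of $\{u>0\}$ abutting $x^0$ will in general reach $\partial\Omega$, so this gives no information. As a result, the tangential/cuspidal scenario is not actually excluded, and the dichotomy in your second paragraph never gets off the ground because the hypothesis $w^m_2\geq\tau_0$ is not secured.

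The paper's proof avoids the non-tangential reduction entirely, and in a way that is worth internalizing: it takes the connected component $V_m$ of $\{u_m>0\}$ touching the origin and uses the maximum principle (rather than a pointwise approach sequence) to show that $V_m$ must meet the lateral/top boundary $\partial A$ of the box $A=(-1,1)\times(0,1)$, since otherwise $u_m=0$ on all of $\partial V_m$ would force $u_m\equiv 0$ on $V_m$. Then it splits on whether $\max\{x_2:x\in V_m\cap\partial A\}$ stays bounded away from $0$ (in which case $\partial_{\textnormal{red}}V_m$ has weighted length bounded below, contradicting $\Delta u_m(B_2)\to 0$) or tends to $0$ (in which case the divergence theorem in $V_m\cap A$, the free-boundary condition $\partial_\nu u_m=-\sqrt{x_2}$ on $\partial_{\textnormal{red}}V_m$, and the inequality $|\nabla u_m|\le\sqrt{x_2^+}$ combine with a weighted isoperimetric/minimality observation to show $V_m$ cannot reach the origin). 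This treats the thin/cuspidal regime and the thick regime by the same geometric device, which is precisely what your argument is missing. If you want to salvage your approach, replace the pointwise ``choose $y^m$ non-tangentially'' step with the component-level maximum-principle argument, and drop the appeal to Lemma~\ref{density_3} at interior points.
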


\begin{proof} Suppose towards a contradiction that $x^0\in \partial\{
u>0\}$, and let us take a blow-up sequence \[u_m(x) :=
{u(x^0+{r_m}x)/{r_m^{3/2}}}\] converging weakly in
$W^{1,2}_{\textnormal{loc}}(\R^n)$ to a blow-up limit $u_0$. Lemma
\ref{density_2} (iv) shows that $u_0=0$ in $\R^2$. Consequently,
\begin{equation}\label{meas0}
0\gets  \Delta u_m (B_2)\ge \int_{B_2\cap
\partial_{\textnormal{red}} \{ u_m>0\}} \sqrt{x_2} \,d\mathcal{H}^1
\quad\text{as }m\to\infty.\end{equation} (Recall that $\Delta u$ is
a nonnegative Radon measure in $\Omega$.) On the other hand, there
is at least one connected component $V_m$ of $\{ u_m>0\}$ touching
the origin and containing by the maximum principle a point $x^m\in
\partial A$, where $A = (-1,1)\times (0,1)$. If $\max\{x_2: x \in
V_m\cap \partial A\}\not\to 0$ as $m\to\infty$, we immediately
obtain a contradiction to (\ref{meas0}). If $\max\{ x_2: x \in
V_m\cap \partial A\}\to 0$, we use the free-boundary condition as
well as $|\nabla u|^2 \leq x_2^+$ to obtain \[0= \Delta u_m(V_m\cap
A) \le \int_{V_m\cap \partial A} \sqrt{x_2} \,d\mathcal{H}^1 -
\int_{A\cap
\partial_{\textnormal{red}} V_m} \sqrt{x_2} \,d\mathcal{H}^1.\] However
$\int_{V_m\cap \partial A} \sqrt{x_2} \,d\mathcal{H}^1$ is the
unique minimiser of $\int_{\partial D} \sqrt{x_2} \,d\mathcal{H}^1$
with respect to all open sets $D$ with $D=V_m$ on $\partial A$. So
$V_m$ cannot touch the origin, a contradiction.\end{proof}

\begin{remark}
Note that we have not really used the full information contained in
the weak formulation. What we have used is the {\em inequality}
$\Delta u \ge \sqrt{x_2} \mathcal{H}\lfloor
\partial_{\textnormal{red}}\{ u>0\}$ (which is true for any limit of
the singular perturbation considered in \cite{calc}) and the fact
that we can locate a non-empty portion of
$\partial_{\textnormal{red}}\{ u>0\}$ touching $x^0$.
\end{remark}

In higher dimensions it is not so clear whether {\em cusps} can be
excluded. Of course that does not happen for Lipschitz free
boundaries:
\begin{lemma}\label{zero2}
Let $u$ be a variational solution of {\rm (\ref{strongp})} and
suppose that
\[|\nabla u|^2 \le Cx_n^+\quad\text{locally in }\Omega,\] and
that $\{ u>0\}$ is locally a Lipschitz set. Then
$\Phi^{\textnormal{bound}}_{x^0,u}(0+)=0$ implies that $u\equiv 0$
in some open $n$-dimensional ball containing $x^0$.
\end{lemma}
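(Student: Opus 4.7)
The plan is to argue by contradiction. Since $\Phi^{\textnormal{bound}}_{x^0,u}$ is defined only for $x^0_n=0$, we consider this case, so $u(x^0)=0$ by the standing hypothesis that $u\equiv 0$ in $\Omega\cap\{x_n\leq 0\}$. Suppose towards a contradiction that $x^0\in\partial\{u>0\}$. By Lemma \ref{density_2} (ii), the hypothesis rewrites, after the substitution $y=(x-x^0)/r$ and using $x_n^+=ry_n^+$, as
\[
\lim_{r\searrow 0}\int_{B_1} y_n^+\,\chi_{\{u>0\}}(x^0+ry)\,dy = 0.
\]
The aim is to contradict this by exhibiting a fixed set $E\subset B_1$ of positive measure on which the integrand is uniformly bounded below by a positive constant for all sufficiently small $r>0$.

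The local Lipschitz assumption at $x^0$ provides a unit vector $\nu\in S^{n-1}$, a radius $s_0>0$, and a Lipschitz function $f:\nu^\perp\to\R$ with $f(0)=0$ and Lipschitz constant $L$, such that
\[
\{u>0\}\cap B_{s_0}(x^0) = \{x\in B_{s_0}(x^0) : (x-x^0)\cdot\nu > f(P_{\nu^\perp}(x-x^0))\},
\]
where $P_{\nu^\perp}$ denotes orthogonal projection. The pointwise bound $|f(z)|\leq L|z|$ yields the open interior cone
\[
C := \{x^0+t\xi : 0<t<s_0,\ \xi\in K\}\subset\{u>0\},\qquad K := \{\xi\in S^{n-1} : \xi\cdot\nu > L/\sqrt{1+L^2}\}.
\]
Here is the crucial geometric input: because $\{u>0\}\subset\{x_n>0\}$ and $x^0_n=0$, every $\xi\in K$ must satisfy $\xi_n>0$; otherwise $C$ would contain points with $x_n\leq 0$. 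Since $K$ is open and nonempty, we may fix a closed subcone $K'\subset K$ of positive spherical measure with $c_0 := \inf_{\xi\in K'}\xi_n > 0$.

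To conclude, set $E := \{s\xi : 0<s<1,\ \xi\in K'\}\subset B_1$. For each $r\in(0,s_0)$ and every $y=s\xi\in E$ one has $x^0+ry=x^0+(rs)\xi\in C\subset\{u>0\}$, hence $\chi_{\{u>0\}}(x^0+ry)=1$, while $y_n=s\xi_n\geq c_0 s = c_0|y|$. Therefore
\[
\int_{B_1} y_n^+\,\chi_{\{u>0\}}(x^0+ry)\,dy \geq c_0\int_E |y|\,dy =: c_1 > 0
\]
uniformly for $r\in(0,s_0)$, contradicting the vanishing limit above. Hence $x^0\notin\partial\{u>0\}$; combined with $u(x^0)=0$ and the continuity of $u$, this forces an open ball around $x^0$ to be contained in $\{u=0\}$. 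The main technical point is the cone extraction in the second paragraph, where the confinement $\{u>0\}\subset\{x_n>0\}$ is used to promote mere positivity of $\xi_n$ on $K$ to a uniform lower bound on a subcone of positive measure; everything else reduces to a direct rescaling computation.
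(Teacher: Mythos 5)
Your proof is correct and is in essence the same argument the paper has in mind: the paper's proof is the one-line remark that the claim is ``an immediate consequence of Lemma \ref{density_2} (ii) and the Lipschitz continuity,'' and what you have written is exactly the expansion of that remark — using Lemma \ref{density_2} (ii) to rewrite $\Phi^{\textnormal{bound}}_{x^0,u}(0+)$ as $\lim_{r\searrow 0}\int_{B_1} y_n^+\chi_{\{u>0\}}(x^0+ry)\,dy$, and then using the Lipschitz interior cone condition at $x^0$, together with the confinement $\{u>0\}\subset\{x_n>0\}$, to bound this quantity uniformly away from zero if $x^0$ were a free boundary point. Your observation that openness of the spherical cone $K$ (rather than just nonemptiness of $\{\xi_n>0\}$) is what permits extraction of the compact subcone $K'$ with $\inf_{K'}\xi_n>0$ is precisely the detail the paper leaves implicit.
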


\begin{proof} This is an immediate consequence of Lemma \ref{density_2}
(ii) and the Lipschitz continuity.\end{proof}

\begin{proposition}[Two-dimensional Case]\label{2dim}
Let $n=2$, let $u$ be a variational solution of {\rm
(\ref{strongp})}, and suppose that
\[|\nabla u|^2 \le Cx_2^+\quad\text{locally in }\Omega.\] Let $x^0\in\Om$
be such that $u(x^0)=0$, and suppose that
\[r^{-1}\int_{B_r(x^0)} |\nabla \chi_{\{ u>0\}}|\le C_0\] for
all $r>0$ such that $B_r(x^0)\subset\subset \Omega$ in the interior
case, and that \[r^{-3/2}\int_{B_r(x^0)} \sqrt{x_2} |\nabla \chi_{\{
u>0\}}|\le C_0\] for all $r>0$ such that $B_r(x^0)\subset\subset
\Omega$ in the boundary case.

(i) Interior Case $x^0_2>0$: The only possible blow-up limits are
\[u_0(x)= \sqrt{x^0_2}\max(x\cdot e,0)\qquad\text{and}\qquad u_0(x)=\gamma |x\cdot e|,\]
where $e$ is a unit vector and $\gamma$ is a nonnegative constant.
In the case $u_0(x)= \sqrt{x^0_2}\max(x\cdot e,0)$ the corresponding
density value is $\omega_2/2$, in the case $u_0(x)=\gamma |x\cdot
e|$ with $\gamma>0$ the density is $\omega_2$, while in the case
$u_0=0$ the density may be either $0$ or $\omega_2$.

(ii) Boundary Case $x^0_2=0$: The only possible blow-up limits are
\[u_0(\rho,\theta)= \frac{\sqrt{2}}{3}\rho^{3/2}\max(\cos(3(\theta-\pi/2)/2),0),\]
 with corresponding density
 \[ \int_{B_1} x_2^+\chi_{\{
\cos(3(\theta-\pi/2)/2)>0\}},\] and $u_0(x)=0$, with possible values
of the density \[ \int_{B_1} x_2^+\quad\text{and}\quad 0.\]
\end{proposition}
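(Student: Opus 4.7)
The plan is to combine the blow-up analysis of Lemma~\ref{density_1} with $L^1_{\mathrm{loc}}$-compactness of the characteristic functions $\chi_{\{u_m>0\}}$ (from the hypothesized BV bound), an ODE classification of homogeneous harmonic functions on $S^1$, and the limiting first-variation identity of Lemma~\ref{density_2}(v) to determine all remaining free parameters. For $r_m\searrow 0$, set $u_m(y):=u(x^0+r_my)/r_m^{\alpha}$ with $\alpha=1$ in the interior case and $\alpha=3/2$ in the boundary case. The hypothesis $|\nabla u|^2\le Cx_2^+$ gives uniform local Lipschitz bounds on $u_m$; Lemma~\ref{density_1}(iii)--(v) then yields, along a subsequence, $u_m\to u_0$ strongly in $W^{1,2}_{\mathrm{loc}}(\R^2)$ and locally uniformly, with $u_0$ homogeneous of degree $\alpha$. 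Rescaling the BV hypothesis produces $\int_{B_R}|\nabla\chi_{\{u_m>0\}}|\le C_0R$ (interior) and $\int_{B_R}\sqrt{y_2^+}\,|\nabla\chi_{\{u_m>0\}}|\le C_0R^{3/2}$ (boundary). By BV compactness, along a further subsequence $\chi_{\{u_m>0\}}\to\chi_E$ in $L^1_{\mathrm{loc}}$ for a set $E$ of locally finite perimeter; in the boundary case, $\chi_E\equiv 0$ on $\{y_2\le 0\}$ by Definition~\ref{vardef}. This $\chi_E$ agrees with the weak $L^2$-limit $\chi_0$ of Lemma~\ref{density_2}(v).

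\emph{ODE classification of $u_0$.} Write $u_0=\rho^{\alpha}f(\theta)$. Harmonicity in $\{u_0>0\}$ yields $f''+\alpha^2f=0$ on each connected component of $\{f>0\}$, and positivity with vanishing endpoints forces each component to have angular width $\pi/\alpha$. In the interior case, $\{f>0\}$ consists of $0$, $1$, or $2$ half-circles, giving $u_0\equiv 0$, $u_0=A\max(x\cdot e,0)$, or $u_0=A_1\max(x\cdot e,0)+A_2\max(-x\cdot e,0)$ (two complementary half-planes). In the boundary case, $u_0\equiv 0$ on $\{y_2\le 0\}$ (inherited from Definition~\ref{vardef}), so $f\equiv 0$ on $[\pi,2\pi]$; since only one arc of width $2\pi/3$ fits in $[0,\pi]$, we obtain $u_0\equiv 0$ or $u_0=A\rho^{3/2}\sin\bigl(\tfrac{3}{2}(\theta-\alpha_0)\bigr)$ on a wedge $(\alpha_0,\alpha_0+\tfrac{2\pi}{3})\subseteq[0,\pi]$.

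\emph{Pinning coefficients via the limiting first variation.} Rescaling the first variation for $u$ by the correct power of $r_m$ and passing to the limit as in Lemma~\ref{density_2}(v), the limit $u_0$ satisfies
\[0=\int\bigl(|\nabla u_0|^2\div\psi-2\nabla u_0\,D\psi\,\nabla u_0+a(y)\chi_0\div\psi+b\,\chi_0\,\psi_n\bigr)\,dy\]
for all $\psi\in C^1_0(\R^2;\R^2)$, where $(a,b)=(x_n^0,0)$ in the interior case (a factor $r_m$ kills the $\chi\psi_n$ term) and $(a,b)=(y_n,1)$ in the boundary case. In the interior one-component setting $u_0=A\max(x_1,0)$ (WLOG $e=e_1$), testing with $\psi=\psi_2 e_2$ shows $\chi_0$ is $x_2$-independent, while $\psi=\psi_1 e_1$ gives the distributional identity $\partial_1(x_n^0\chi_0)=A^2\delta_{\{x_1=0\}}$; combined with $\chi_0\in\{0,1\}$ this forces $A=\sqrt{x_n^0}$ and $\chi_0=\chi_{\{x_1>0\}}$ (or $A=0$, reducing to the zero case). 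In the two-component setting, $\chi_0=1$ a.e.\ by uniform positivity of $u_m$ off $\{x_1=0\}$, and the same test yields $A_1^2=A_2^2$, hence $A_1=A_2=\gamma\ge 0$. In the boundary one-component setting, on the wedge one computes $|\nabla u_0|^2=\tfrac{9}{4}A^2\rho$; the limiting free-boundary condition $|\nabla u_0|^2=y_2=\rho\sin\theta$ at both bounding rays (extracted from the first variation with test fields transverse to the rays) forces $\sin\alpha_0=\sin(\alpha_0+\tfrac{2\pi}{3})=\tfrac{9}{4}A^2$, yielding $\alpha_0=\pi/6$ and $A=\sqrt{2}/3$; the trigonometric identity $\sin(\tfrac{3}{2}(\theta-\tfrac{\pi}{6}))=\cos(\tfrac{3}{2}(\theta-\tfrac{\pi}{2}))$ then recovers the Stokes form. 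When $u_0\equiv 0$, the reduced identity forces $\chi_0$ to be spatially constant in the interior case (so $\chi_0\in\{0,1\}$), and $y$-independent on $\{y_2>0\}$ with $\chi_0=0$ on $\{y_2<0\}$ in the boundary case (so $\chi_0\in\{0,\chi_{\{y_2>0\}}\}$).

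\emph{Densities and main obstacle.} Substituting each identified $\chi_0$ into the formulas $\Phi^{\mathrm{int}}(0+)=x_n^0\int_{B_1}\chi_0$ and $\Phi^{\mathrm{bound}}(0+)=\int_{B_1}y_2^+\chi_0$ from Lemma~\ref{density_2}(i)--(ii) produces the densities listed in the statement. The main obstacle is the third step: correctly rescaling the first-variation identity, passing it to the limit, and interpreting the distributional Dirac masses that arise when testing across the interfaces---particularly delicate in the boundary case, where both the wedge angle $\alpha_0$ and the amplitude $A$ must be extracted simultaneously from the same identity.
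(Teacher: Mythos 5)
Your proposal is essentially the same argument as the paper's: blow up, extract a homogeneous limit $u_0$ via the Monotonicity Formula and strong $W^{1,2}_{\mathrm{loc}}$ convergence, use the rescaled BV bound for $L^1_{\mathrm{loc}}$ compactness of $\chi_{\{u_m>0\}}$, pass the first variation to the limit, solve the resulting ODE on $S^1$, and use the limiting domain-variation identity to pin down the constants and $\chi_0$. The one place where you diverge in presentation rather than substance is in ruling out $\bar\chi_0=1$ on $\{u_0=0\}^\circ$: the paper does this directly via Hopf's lemma ($\nabla u_0\cdot\nu\neq 0$ on the interface makes $\bar\chi_0=1$ inconsistent with the first-variation identity (\ref{dmvint2})/(\ref{dmvbound2})), whereas you invoke consistency of the derived free-boundary condition with $A>0$; both work, but the Hopf route is a bit cleaner as it avoids the implicit case split on $A$. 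Also a small bookkeeping note: in the interior case the proposition's "$\omega_2/2$" is the Lebesgue density of $\{u>0\}$ (i.e.\ $\lim_{r\searrow 0}r^{-n}\int_{B_r(x^0)}\chi_{\{u>0\}}$), not $\Phi^{\mathrm{int}}_{x^0,u}(0+)$, which by Lemma \ref{density_2}(i) carries the extra factor $x_2^0$; your final density computation should report the former to match the statement.
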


\begin{proof} Consider a blow-up sequence $u_m$ as in
Lemma \ref{density_1}, where $r_m\searrow 0$, with blow-up limit
$u_0$. Because of the strong convergence of $u_m$ to $u_0$ in
$W^{1,2}_{\textnormal{loc}}(\R^2)$ and the compact embedding from
$BV$ into $L^1$, $u_0$ is a homogeneous solution of \be
\label{dmvint} 0 = \int_{\R^2} \Big( {\vert \nabla u_0 \vert}^2
\div\phi - 2 \nabla u_0 D\phi \nabla u_0 \Big) +x^0_2 \int_{\R^2}
\chi_0 \div\phi \ee for any $\phi \in C^1_0(\R^2;\R^2)$ in the
interior case, and of \be \label{dmvbound}0
 = \int_{\R^2} \Big( {\vert \nabla u_0 \vert}^2 \div\phi -
2 \nabla u_0 D\phi \nabla u_0 \Big)
 + \int_{\R^2} \Big(x_2 \chi_0 \div\phi + \chi_0 \phi_2\Big)
\ee for any $\phi \in C^1_0(\R^2;\R^2)$ in the boundary case, where
 $\chi_0$ is the strong $L^1_{\textnormal{loc}}$-limit of
$\chi_{\{u_m>0\}}$ along a subsequence. The values of the function
$\chi_0$ are almost everywhere in $\{0, 1\}$, and the locally
uniform convergence of $u_m$ to $u_0$ implies that $\chi_0=1$ in $\{
u_0>0\}$. The homogeneity of $u_0$ and its harmonicity in
$\{u_0>0\}$ show that each connected component of $\{u_0>0\}$ is a
half-plane passing trough the origin in the interior case, and a
cone with vertex at the origin and of opening angle $120^\circ$ in
the boundary case. Also, (\ref{dmvint}) and (\ref{dmvbound}) imply
 $\chi_0$ is  constant in the connected set $\{ u_0=0\}^\circ$.

Consider first the case when $\{u_0>0\}$ has exactly one connected
component. Let $z$ be an arbitrary  point in $
\partial\{ u_0=0\} \setminus \{ 0\}$. Note that the normal to
$\partial \{ u_0=0\}$ has the constant value
$\nu(z)$ in $B_\delta(z)$ for some $\delta>0$. Plugging in
$\phi(x):= \eta(x)\nu(z)$ into (\ref{dmvint}) and (\ref{dmvbound}),
where $\eta \in C^1_0(B_\delta(z))$ is arbitrary, and integrating by
parts, it follows that
\begin{equation}\label{dmvint2}0= \int_{\partial\{ u_0>0\}} \left(
-|\nabla u_0|^2 + x^0_2 (1-\bar\chi_0)\right)\eta
\,d\mathcal{H}^1\end{equation} in the interior case and that
\begin{equation}\label{dmvbound2}0= \int_{\partial\{ u_0>0\}} \left(
-|\nabla u_0|^2 + x_2 (1-\bar\chi_0)\right)\eta
\,d\mathcal{H}^1\end{equation} in the boundary case. Here
$\bar\chi_0$ denotes the constant value of $\chi_0$ in $\{
u_0=0\}^\circ$. Note that by Hopf's principle, $\nabla u_0\cdot
\nu\ne 0$ on $B_\delta(z)\cap \partial\{ u_0>0\}$. In both interior
and boundary case it follows therefore that $\bar\chi_0\neq 1$, and
hence necessarily $\bar\chi_0=0$. We deduce from (\ref{dmvint2}) and
(\ref{dmvbound2}) that $|\nabla u_0|^2=x^0_2$ on $\partial \{
u_0>0\}$ in the interior case and that $|\nabla u_0|^2 =x_2$ on
$\partial \{ u_0>0\}$ in the boundary case. Computing the ODE
solution $u_0$ on $\partial B_1$ yields the statement of the
Proposition in the case under consideration.

In the case $u_0= 0$, (\ref{dmvint}) and (\ref{dmvbound}) show that
$\chi_0$ is constant in $\R^2$. Its value may be either $0$ or $1$.

Last, consider the situation when, in the interior case, the set
$\{u_0>0\}$ has two connected components. The argument for
(\ref{dmvint2}) now yields that the constant values of $|\nabla
u_0|^2$ on either side of $\partial \{u_0>0\}$ are equal. This
yields the statement of the Proposition.

 \end{proof}

\section{Partial regularity of non-degenerate solutions}

\begin{definition}[Stagnation Points]
Let $u$ be a variational solution of {\rm (\ref{strongp})}. We call
$S^u := \{ x\in \Omega: x_n=0 \text{ and } x\in \partial\{ u>0\}\}$
the set of stagnation points.
\end{definition}

\begin{definition}[Non-degeneracy, Density Condition]\label{nondeg}
Let $u$ be a variational solution of {\rm (\ref{strongp})}.

(i) We say that a point $x^0\in \Omega\cap
\partial\{ u>0\}\cap\{ x_n=0\}$ satisfies property $(N)$  if
\[\liminf_{r\searrow 0}r^{-n-3} \int_{B_r(x^0)} u^2>0.\] Moreover we define
for each $\kappa>0$ and $\varsigma>0$ the set
\[N^u_{\varsigma,\kappa} := \{ x^0\in \Omega\cap \partial\{
u>0\}\cap\{ x_n=0\}: r^{-n-3} \int_{B_r(x^0)} u^2\ge \kappa
\quad\text{ for } r\in (0,\varsigma]\}.\]

(ii) We say that a point $x^0\in \Omega\cap \partial\{ u>0\}\cap\{
x_n=0\}$ satisfies property $(D)$  if
\begin{align}0&<\liminf_{r\searrow 0}r^{-n-1} \int_{B_r(x^0)} x_n^+
\chi_{\{ u>0\}}\non\\
&\le \limsup_{r\searrow 0}r^{-n-1} \int_{B_r(x^0)} x_n^+ \chi_{\{
u>0\}}\non\\ &< \int_{B_1} x_n^+.\non\end{align}
\end{definition}

Note that $\cup_{\varsigma,\kappa}N^u_{\varsigma,\kappa}$ is the set
of all points satisfying property (N).

\begin{lemma}\label{equiv}
Let $u$ be a variational solution of {\rm (\ref{strongp})} and
suppose that
\[|\nabla u|^2 \le Cx_n^+\quad\text{locally in }\Omega,\]
 and that
\[r^{1/2-n}\int_{B_r(y)} \sqrt{x_n} |\nabla \chi_{\{ u>0\}}|\le C_0\]
for all $B_r(y)\subset\subset\Omega$ such that $y_n=0$.
 Then properties (N) and (D) are equivalent.
\end{lemma}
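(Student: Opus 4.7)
The plan is to translate both properties into statements about the blow-up family $u_r(y) := u(x^0 + ry)/r^{3/2}$, which vanishes on $\{y_n \le 0\}$ since $x^0_n = 0$. A direct change of variables gives
\[ r^{-n-3}\int_{B_r(x^0)} u^2 = \int_{B_1} u_r^2, \qquad r^{-n-1}\int_{B_r(x^0)} x_n^+ \chi_{\{u>0\}} = \int_{B_1} y_n^+ \chi_{\{u_r>0\}}, \]
so (N) reads $\liminf_{r\searrow 0} \int_{B_1} u_r^2 > 0$ and, by Lemma \ref{density_2}(ii), the right-hand quantity converges to $\Phi^{\textnormal{bound}}_{x^0,u}(0+)$ so that (D) reads $0 < \Phi^{\textnormal{bound}}_{x^0,u}(0+) < \int_{B_1} y_n^+$. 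The growth and weighted BV bounds scale uniformly in $r$, so along any sequence $r_m \searrow 0$ I extract a subsequence with $u_{r_m} \to u_0$ strongly in $W^{1,2}_{\textnormal{loc}}(\R^n)$ and locally uniformly (Lemma \ref{density_1}(iv),(v)), with $u_0$ homogeneous of degree $3/2$, and with $\chi_{\{u_{r_m}>0\}} \to \chi_0$ in $L^1_{\textnormal{loc}}(\{y_n>0\})$; passing to the limit gives $\int_{B_1} y_n^+ \chi_0 = \Phi^{\textnormal{bound}}_{x^0,u}(0+)$ independently of the subsequence.

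The core reduction is then
\[ u_0 \neq 0 \quad \Longleftrightarrow \quad 0 < \Phi^{\textnormal{bound}}_{x^0,u}(0+) < \int_{B_1} y_n^+. \]
For $(\Rightarrow)$: the inclusion $\chi_0 \ge \chi_{\{u_0>0\}}$ immediately supplies the lower bound when $u_0\neq 0$. Supposing the upper bound fails, $\chi_0 = 1$ a.e.\ on $\{y_n>0\}$, and passing to the limit in the variational identity for $u_{r_m}$ the $\chi_0$-terms combine into $\int \div(y_n\phi) = 0$ for every $\phi \in C^1_0(\{y_n>0\};\R^n)$, leaving the Pohozaev identity
\[ \int_{\R^n} \bigl( |\nabla u_0|^2 \div\phi - 2\nabla u_0 D\phi \nabla u_0 \bigr) = 0. \]
Writing $P := \{u_0 > 0\} \subset \{y_n>0\}$, integration by parts inside $P$ (where $u_0$ is smooth and harmonic) with $u_0 = 0$ on $\partial P$ reduces the identity to $\int_{\partial P \cap \{y_n>0\}} |\nabla u_0|^2\, \phi\cdot\nu \dh = 0$ for all such $\phi$, forcing $|\nabla u_0| = 0$ $\mathcal{H}^{n-1}$-a.e.\ on $\partial P \cap \{y_n>0\}$. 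The scaled weighted BV-bound gives $P$ finite perimeter, hence a measure-theoretic tangent plane to $\partial P$ at $\mathcal{H}^{n-1}$-a.e.\ point, so Hopf's lemma applies at a regular boundary point and contradicts the positivity of $u_0$ inside the corresponding component.

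For $(\Leftarrow)$: if $u_0 \equiv 0$, the limit variational identity collapses to $\int \chi_0\, \div(y_n\phi) = 0$ for every $\phi \in C^1_0(\{y_n>0\};\R^n)$, i.e.\ $y_n\nabla\chi_0 = 0$ as a vector-valued distribution on $\{y_n>0\}$; hence $\chi_0$ is constant on the connected set $\{y_n>0\}$ and $\Phi^{\textnormal{bound}}_{x^0,u}(0+) \in \{0, \int_{B_1} y_n^+\}$. Combining both directions, property (D) is equivalent to every subsequential blow-up $u_0$ being nonzero, which is the same as (N). The main technical obstacle is the Pohozaev-plus-Hopf step: one has to promote the distributional stress-energy identity to pointwise vanishing of $|\nabla u_0|$ on the free boundary of the blow-up, which is done by exploiting the homogeneity of $u_0$ and the rectifiability of $\partial P$ inherited from the weighted BV-bound so that Hopf's lemma becomes applicable at an $\mathcal{H}^{n-1}$-a.e.\ regular boundary point.
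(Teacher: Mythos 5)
Your $(\Leftarrow)$ direction (equivalently, (D)$\Rightarrow$(N)) is essentially the paper's argument: if $u_0\equiv 0$ the domain-variation identity collapses to $\int\chi_0\,\div(x_n\phi)=0$, forcing $\chi_0$ to be constant on $\{x_n>0\}$, so the density $\Phi^{\textnormal{bound}}_{x^0,u}(0+)$ is either $0$ or $\int_{B_1}x_n^+$, contradicting (D). That part is fine.

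Your $(\Rightarrow)$ direction (equivalently, (N)$\Rightarrow$(D)) deviates from the paper and has a genuine gap precisely at the point you flag as the ``main technical obstacle.'' Two problems. First, the integration by parts inside $P=\{u_0>0\}$ that is supposed to convert the stress-energy identity into $\int_{\partial P\cap\{x_n>0\}}|\nabla u_0|^2\,\phi\cdot\nu\,\dh=0$ requires $u_0\in C^1$ up to $\partial P$ (or at least $\partial P$ smooth enough to justify the boundary term); the blow-up $u_0$ is only known to be a variational solution, and neither $C^1(\overline P)$ regularity nor enough boundary regularity is available. Second, even granting $|\nabla u_0|=0$ $\mathcal H^{n-1}$-a.e.\ on $\partial^*P$, Hopf's lemma does not apply: a measure-theoretic tangent plane (De Giorgi rectifiability) does not give the interior ball or Dini/$C^{1,\alpha}$ boundary regularity that Hopf needs, so there is no contradiction with positivity. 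The paper avoids all of this: with $\chi_0\equiv 1$, it applies the Monotonicity Formula for $u_0$ (with $\chi_{\{u_0>0\}}$ replaced by $\chi_0$, justified by Lemma \ref{density_2}(v)) centered at an \emph{arbitrary} point $x$ with $x_n=0$, and observes that $\Phi^{\textnormal{bound}}_{x,u_0}(\sigma)-\Phi^{\textnormal{bound}}_{0,u_0}(\sigma)\to 0$ as $\sigma\to\infty$ while $\Phi^{\textnormal{bound}}_{0,u_0}$ is constant by homogeneity; this forces $u_0$ to be $3/2$-homogeneous with respect to every such $x$, hence $\nabla u_0$ is parallel to $e_n$, hence $u_0(x)=\alpha(x_n^+)^{3/2}$, which is harmonic in $\{u_0>0\}$ only when $\alpha=0$, contradicting (N). You should replace your Pohozaev/Hopf step with this translation-invariance argument, which is purely variational and uses no free-boundary regularity.
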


\begin{proof} (D) $\Rightarrow$ (N): Consider a blow-up limit $u_0$ of the
sequence \[u_m(x) := u(x^0+r_m x)/r_m^{3/2},\] where $r_m\searrow
0$, and suppose towards a contradiction that $u_0=0$. Passing to the
limit in the domain variation equation we obtain
\begin{align}0&=\int_{\R^n} \Big( {\vert \nabla u_0 \vert}^2 \div
\phi   -   2 \nabla u_0 D\phi \nabla u_0 + x_n \chi_0 \div\phi
  + \chi_0\phi_n\Big)\non\\
&= \int_{\R^n} \Big( x_n \chi_0 \div\phi
  +  \chi_0\phi_n\Big)\non\end{align} for any $\phi \in
C^1_0(\R^n;\R^n)$, where $\chi_0$ is the limit of $\chi_{\{u_m>0\}}$
with respect to a subsequence. This implies that $\chi_0$ is a
constant function. On the other hand, the condition on $|\nabla
\chi_{\{ u>0\}}|$ implies that the values of $\chi_0$ are almost
everywhere in $\{0,1\}$, and then condition (D) shows that the
function $\chi_0$ is not constant, a contradiction.

(N) $\Rightarrow$ (D): The proof draws on \cite[Proof of Proposition
9.1]{calc}. Let us again consider a blow-up limit $u_0$ of the
sequence \[u_m(x):= u(x^0+r_m x)/r_m^{3/2},\] and suppose towards a
contradiction that $\chi_0 := \lim_{m\to \infty}
\chi_{\{u_m>0\}}\equiv 1$. By the Monotonicity Formula (which holds
for $u_0$ with $\chi_{\{ u_0>0\}}$ replaced by $\chi_0$) and the
growth estimate we obtain for each point $x$ such that $x_n=0$,
\begin{align}0&\gets \Phi^{\textnormal{bound}}_{x,u_0}(\sigma)  -
\Phi^{\textnormal{bound}}_{0,u_0}(\sigma)\non\\
&=\Phi^{\textnormal{bound}}_{x,u_0}(\sigma)
 -  \Phi^{\textnormal{bound}}_{0,u_0}(0+)\non\\&=\Phi^{\textnormal{bound}}_{x,u_0}(\sigma)
 -  \Phi^{\textnormal{bound}}_{x,u_0}(0+)\non\\
  &=
\int_0^\sigma r^{-n-1} \int_{\partial B_r(x)} 2 \left(\nabla u \cdot
\nu - \frac{3}{2}\frac{u}{r}\right)^2   \dh \,dr\non\end{align} as
$\sigma\to+\infty$. But this means that $u_0$ is homogeneous of
degree $3/2$ with respect to each point $x$ such that $x_n=0$. It
follows that $u_0$ depends only on the $x_n$-variable. Thus
$u_0(x)=\alpha (x_n^+)^{3/2}$ for some $\alpha \ge 0$, a
contradiction to the definition of variational solution unless
$\alpha=0$.
\end{proof}

\begin{proposition}[Two-dimensional Case]\label{isolated}
Let $n=2$, let $u$ be a variational solution of {\rm
(\ref{strongp})}, suppose that
\[|\nabla u|^2 \le Cx_2^+\quad\text{locally in }\Omega,\]
 and that
\[r^{-3/2}\int_{B_r(y)} \sqrt{x_2} |\nabla \chi_{\{ u>0\}}|\le C_0\]
for all $B_r(y)\subset\subset\Omega$ such that $y_n=0$. At each
non-degenerate stagnation point $x^0$, the density
$\Phi^{\textnormal{bound}}_{x^0,u}(0+)$ has the value
\[\int_{B_1} x_2^+\chi_{\{ \cos(3(\theta-\pi/2)/2)>0\}}\]
and
\[\frac{u(x^0+rx)}{r^{3/2}}\to \frac{\sqrt{2}}{3}
\rho^{3/2}\max(\cos(3(\theta-\pi/2)/2),0)\quad\text{as }r\searrow
0,\] strongly in $W^{1,2}_{\textnormal{loc}}(\R^2)$ and locally
uniformly on $\R^2$, where $x=(\rho\cos\theta,\rho\sin\theta)$.
Moreover,
\[\mathcal{L}^2(B_1\cap (\{x:
u(x^0+rx)>0\}\bigtriangleup\{\cos(3(\theta-\pi/2)/2)>0\}))\to
0\quad\text{as }r\searrow 0,\] and, for each $\delta>0$,
\[r^{-3/2}\Delta
u((x^0+B_r)\cap(\{\cos(3(\theta-\pi/2)/2)>\delta\}\cup
\{\cos(3(\theta-\pi/2)/2)<-\delta\}))\to 0\] as $r\searrow 0$.
(Recall that $\Delta u$ is a nonnegative Radon measure in $\Omega$.)

\end{proposition}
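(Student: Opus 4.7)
The plan is to combine Proposition \ref{2dim}(ii) (the 2D classification of blow-up limits) with Lemma \ref{equiv} (non-degeneracy (N) $\Leftrightarrow$ density condition (D)) via a blow-up and monotonicity argument. Take any sequence $r_m\searrow 0$ and set $u_m(x):=u(x^0+r_m x)/r_m^{3/2}$. Rescaling the hypothesis $|\nabla u|^2\le Cx_2^+$ gives $|\nabla u_m(x)|^2\le C x_2^+$ uniformly in $m$, so the $u_m$ are uniformly Lipschitz on compact subsets of $\R^2$; Arzela--Ascoli together with $W^{1,2}_{\mathrm{loc}}$-compactness extracts a subsequence converging to some $u_0$ weakly in $W^{1,2}_{\mathrm{loc}}$ and locally uniformly on $\R^2$. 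Lemma \ref{density_1}(iv)--(v) then upgrade the convergence to strong in $W^{1,2}_{\mathrm{loc}}$ and show that $u_0$ is $3/2$-homogeneous.

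To identify $u_0$, I apply Proposition \ref{2dim}(ii): the only options are the explicit Stokes corner profile $\tfrac{\sqrt 2}{3}\rho^{3/2}\max(\cos(3(\theta-\pi/2)/2),0)$ or $u_0\equiv 0$. The non-degeneracy of $x^0$ is property (N), which by Lemma \ref{equiv} is equivalent to (D), so $\Phi^{\mathrm{bound}}_{x^0,u}(0+)\in(0,\int_{B_1} x_2^+)$. The trivial-limit cases in Proposition \ref{2dim}(ii) carry densities $0$ or $\int_{B_1} x_2^+$, both excluded by (D); hence $u_0$ must be the Stokes corner profile and the density takes the claimed value $\int_{B_1} x_2^+\chi_{\{\cos(3(\theta-\pi/2)/2)>0\}}$. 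Since the Stokes corner profile in Proposition \ref{2dim}(ii) carries no rotational freedom (the free-boundary condition $|\nabla u_0|^2=x_2$ distinguishes the $x_2$-axis), every subsequential blow-up limit is the same function, so the full family $u_m$ converges without subsequencing.

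For the symmetric-difference statement, I would use that on any compact subset of $\{x_2>\tau\}$ the weighted BV-bound reduces to a uniform unweighted BV-bound (after rescaling, using $\sqrt{x_2}\ge\sqrt{\tau r_m}$ on $B_{r_m}(x^0)\cap\{x_2>r_m\tau\}$), giving $L^1_{\mathrm{loc}}(\{x_2>0\})$-precompactness of $\chi_{\{u_m>0\}}$. Locally uniform convergence of $u_m$ to $u_0$ forces any subsequential $L^1$-limit $\chi_\infty$ to satisfy $\chi_\infty\ge\chi_{\{u_0>0\}}$ on $\{x_2>0\}$, while the density matching $\int_{B_1}x_2^+\chi_{\{u_m>0\}}\to\int_{B_1} x_2^+\chi_{\{u_0>0\}}$ (the rescaled form of Lemma \ref{density_2}(ii), once $u_0$ is identified and the argument in Proposition \ref{2dim}(ii) gives $\chi_0=\chi_{\{u_0>0\}}$) upgrades this inequality to equality almost everywhere on $\{x_2>0\}$. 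Because $\mathcal{L}^2(B_1\cap\{0\le x_2\le\tau\})=O(\tau)$, a diagonal argument as $\tau\searrow 0$ yields the claim on all of $B_1$.

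Finally, the concentration statement on $\Delta u$ becomes, via the rescaling identity $r_m^{-3/2}\Delta u(x^0+r_m A)=\Delta u_m(A)$, the claim $\Delta u_m(B_1\cap A_\delta)\to 0$, with $A_\delta:=\{|\cos(3(\theta-\pi/2)/2)|>\delta\}$. A uniform bound $\Delta u_m(B_R)\le C_R$ follows from testing the distributional Laplacian against a smooth cutoff and using $\|\nabla u_m\|_{L^2(B_{2R})}^2\le C\int_{B_{2R}} x_2^+$; together with strong $W^{1,2}_{\mathrm{loc}}$-convergence of $u_m$ to $u_0$, this yields weak-$*$ convergence of $\Delta u_m$ to $\Delta u_0$ as measures on compact sets. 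The Stokes corner profile is harmonic off the two rays $\{\cos(3(\theta-\pi/2)/2)=0\}$, so $\Delta u_0$ is supported there and $\Delta u_0(\bar B_1\cap\bar A_\delta)=0$; upper semicontinuity of weak-$*$ limits on closed sets then concludes. The most delicate step in the whole argument is the density-matching bridge between locally uniform convergence of $u_m$ and $L^1$-convergence of the characteristic functions in the full ball $B_1$; everything else follows naturally from monotonicity, homogeneity, and the 2D classification already at our disposal.
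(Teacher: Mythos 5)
Your proposal is correct and follows essentially the same route as the paper's proof: blow up along an arbitrary sequence, use Lemma \ref{density_1}(iv)--(v) for strong $W^{1,2}_{\textnormal{loc}}$ convergence and $3/2$-homogeneity, classify the limit via Proposition \ref{2dim}(ii), exclude the trivial limits via the equivalence $(N)\Leftrightarrow(D)$ from Lemma \ref{equiv} together with Lemma \ref{density_2}(ii), and then obtain the two measure estimates from $L^1(B_1)$-convergence of $\chi_{\{u_m>0\}}$ and weak-$*$ convergence of $\Delta u_m$. Where the paper simply cites ``by the proof of Proposition \ref{2dim}'' for the $L^1(B_1)$ convergence of the indicator functions, you spell out a density-matching argument plus an $O(\tau)$ estimate near $\{x_2=0\}$; this is a harmless elaboration of the same step, not a different approach.
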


\begin{proof} The value of the density and the uniqueness
of the blow-up limit follow directly from Proposition \ref{2dim}
(ii) and the non-degeneracy assumption.

Let $r_m\searrow 0$ be an arbitrary sequence, let us consider once
more the blow-up sequence $u_m$ defined in Lemma \ref{density_1}
(iv), and let \[ u_0(\rho,\theta)=\frac{\sqrt{2}}{
3}\rho^{3/2}\max(\cos(3(\theta-\pi/2)/2),0).\] By the proof of
Proposition \ref{2dim}, $\chi_{\{u_m>0\}}$ converges strongly in
$L^1(B_1)$ to $\chi_{\{u_0>0\}}$ along a subsequence. Since this is
true for {\em all} sequences $r_m\searrow 0$, it follows that
\[\chi_{\{x: u(x^0+rx)>0\}}\to \chi_{\{u_0>0\}}\quad\text{strongly in }L^1(B_1)\quad\text{as }r\searrow 0,
\]
which is exactly the first measure estimate. The convergence of
$u_m$ to $u_0$ implies the weak convergence of the sequence of
nonnegative Radon measures $\Delta u_m$ to $\Delta u_0$. Since $u_0$
is
 harmonic in
$\{\cos(3(\theta-\pi/2)/2)>\delta/2\}\cup
\{\cos(3(\theta-\pi/2)/2)<-\delta/2\}$, it follows that \[\Delta
u_m(B_1\cap(\{\cos(3(\theta-\pi/2)/2)>\delta\}\cup
\{\cos(3(\theta-\pi/2)/2)<-\delta\}))\to 0\] as $m\to \infty$. Since
this is true for all sequences $r_m\searrow 0$, the second measure
estimate follows.\end{proof}

\begin{proposition}[Partial regularity in two dimensions]\label{accum}
Let $n=2$, let $u$ be a variational solution of {\rm
(\ref{strongp})},  and suppose that
\[|\nabla u|^2 \le Cx_2^+\quad\text{locally in }\Omega,\]
and that \[r^{-3/2}\int_{B_r(y)} \sqrt{x_2} |\nabla \chi_{\{
u>0\}}|\le C_0\] for all $B_r(y)\subset\subset \Omega$ such that
$y_2=0$. Let $x^0\in S^u$ be a non-degenerate point. Then in some
open neighborhood, $x^0$ is the {\rm only} non-degenerate stagnation
point.
\end{proposition}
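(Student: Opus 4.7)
My plan is proof by contradiction: suppose there exists a sequence of non-degenerate stagnation points $x^k \in S^u \setminus \{x^0\}$ with $x^k \to x^0$. Set $r_k := |x^k - x^0| \searrow 0$ and $y_k := (x^k - x^0)/r_k$, and pass to a subsequence so that $y_k \to y^*$ with $|y^*| = 1$. Since both $x^k$ and $x^0$ lie on $\{x_2 = 0\}$, necessarily $y^*_2 = 0$, so $y^* \in \{(1,0),(-1,0)\}$.

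Next I consider the blow-up sequence $u_{r_k}(x) := u(x^0 + r_k x)/r_k^{3/2}$. Because $x^0_2 = 0$, each $u_{r_k}$ is itself a variational solution of (\ref{strongp}) on the rescaled domain and still satisfies the growth and density hypotheses of Proposition \ref{isolated} (the density bound on $|\nabla \chi_{\{\cdot>0\}}|$ is scale-invariant in dimension two, which is a small check I would write out). Applying Proposition \ref{isolated} to $u$ at $x^0$ yields $u_{r_k} \to u_0$ strongly in $W^{1,2}_{\textnormal{loc}}(\R^2)$ and locally uniformly on $\R^2$, and $\chi_{\{u_{r_k}>0\}} \to \chi_{\{u_0>0\}}$ in $L^1_{\textnormal{loc}}(\R^2)$, with $u_0$ the Stokes corner flow whose fluid region is the open cone of opening $2\pi/3$ symmetric about $\theta = \pi/2$. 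In particular, both candidates for $y^*$ sit in the interior of the air region, and $u_0 \equiv 0$ on some ball $B_{\rho_0}(y^*)$.

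The bookkeeping step is the scaling identity $\Phi^{\textnormal{bound}}_{y_k, u_{r_k}}(\rho) = \Phi^{\textnormal{bound}}_{x^k, u}(r_k \rho)$, which follows from a direct change of variables and is the one small calculation I would carry out carefully. Letting $\rho \searrow 0$ and invoking the non-degeneracy of $x^k$ together with Proposition \ref{isolated}, I conclude $\Phi^{\textnormal{bound}}_{y_k, u_{r_k}}(0+) = D_*$, where $D_* := \int_{B_1} x_2^+ \chi_{\{\cos(3(\theta - \pi/2)/2) > 0\}} > 0$ is the Stokes density value.

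To close the argument, I fix any $\rho \in (0, \rho_0)$ and observe that all three ingredients of $\Phi^{\textnormal{bound}}_{y_k, u_{r_k}}(\rho)$ converge to the corresponding ingredients of $\Phi^{\textnormal{bound}}_{y^*, u_0}(\rho) = 0$: the Dirichlet term by strong $W^{1,2}$ convergence, the $x_2 \chi$ term by $L^1_{\textnormal{loc}}$ convergence of the characteristic functions, and the spherical term by locally uniform convergence of $u_{r_k}$ to the function that is identically $0$ on $B_{\rho_0}(y^*)$. The Monotonicity Formula (Theorem \ref{elmon2} (ii)) applied at the boundary point $y_k$ gives $D_* = \Phi^{\textnormal{bound}}_{y_k, u_{r_k}}(0+) \le \Phi^{\textnormal{bound}}_{y_k, u_{r_k}}(\rho)$, so passing $k \to \infty$ forces $D_* \le 0$, a contradiction. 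The main conceptual obstacle is ensuring that the scale-invariant non-degeneracy condition at $x^k$ transfers under blow-up to the rescaled picture and must then be supported at a point where the Stokes corner flow leaves no mass, so that the Monotonicity Formula can deliver the contradiction.
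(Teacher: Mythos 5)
Your proof is correct and follows essentially the same strategy as the paper's: blow up at $x^0$ to the Stokes corner flow, observe that the rescaled non-degenerate points land at $(\pm 1,0)$, which is inside the air region of the limit, and then transfer the positive Stokes density to that location to contradict that the blow-up limit vanishes there. The only cosmetic difference is that you unpack the upper semicontinuity of density under strong convergence (the paper's Lemma~\ref{density_2}(v)) into an explicit convergence-plus-monotonicity argument, which is exactly how that lemma is proved anyway.
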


\begin{proof} Suppose towards a contradiction that there exists a sequence
$x^m$ of non-degenerate points converging to $x^0$, with $x^m\neq
x_0$ for all $m$. Choosing $r_m := |x^m-x^0|$, there is no loss of
generality in assuming that the sequence $(x^m-x^0)/r^m$ is
constant, with value $z\in\{(-1,0), (1,0)\}$. Consider the blow-up
sequence
\[u_m(x)={u(x^0+r_m x)}/{r_m^{3/2}}.\]
Since $x^m$ is a non-degenerate point for $u$, it follows that $z$
is a non-degenerate point for $u_m$, and therefore Proposition
\ref{isolated} shows that
\[ \Phi^{\textnormal{bound}}_{z,u^m}(0+)=\int_{B_1} x_2^+\chi_{\{
\cos(3(\theta-\pi/2)/2)>0\}}.\] By Lemma \ref{density_1} (v) and the
proof of Proposition \ref{2dim}(ii), the sequence $u_m$ converges
strongly in $W^{1,2}_{\textnormal{loc}}(\R^2)$ to the homogeneous
solution
\[u_0(\rho, \theta)=\frac{\sqrt{2}}{3}
\rho^{3/2}\max(\cos(3(\theta-\pi/2)/2),0),\] where
$x=(\rho\cos\theta, \rho\sin\theta)$, while $\chi_{\{u_m>0\}}$
converges strongly in $L^1_{\textnormal{loc}}(\R^2)$ to
$\chi_{\{u_0>0\}}$. It follows from Lemma \ref{density_2} (v) that
\[ \Phi^{\textnormal{bound}}_{z,u^0}(0+)\geq \limsup_{m\to\infty}
\Phi^{\textnormal{bound}}_{z,u^m}(0+)=\int_{B_1} x_2^+\chi_{\{
\cos(3(\theta-\pi/2)/2)>0\}}\] contradicting the fact that
\[ \Phi^{\textnormal{bound}}_{z,u^0}(0+)=0.\]

\end{proof}

\begin{remark}It follows that in two dimensions $S^u$ can be decomposed into
a countable set of ``Stokes points'' with the asymptotics as in
Proposition \ref{isolated}, accumulating (if at all) only at
``degenerate stagnation points'', and a set of ``degenerate
stagnation points'' which will be analyzed in the following
sections.
\end{remark}

The following Lemma will be used in order to prove the partial
regularity result Proposition \ref{partial}.

\begin{lemma}\label{dist}Let $u$ be a variational solution of {\rm (\ref{strongp})} and suppose that
\[|\nabla u|^2 \le Cx_n^+\quad\text{locally in }\Omega,\] and that
\[r^{1/2-n}\int_{B_r(y)} \sqrt{x_n} |\nabla \chi_{\{ u>0\}}|\le C_0\]
for all $B_r(y)\subset\subset\Omega$ such that $y_n=0$. Suppose that
$x^0\in S^u$ and let $u_0$ be a blow-up limit of
\[u_m(x):={u(x^0+r_m x)}/{r_m^{3/2}}.\] Then for each compact set
$K\subset \R^n$ and each open set $U\supset K\cap
N_{\varsigma,\kappa}^{u_0}$ there exists $m_0<\infty$ such that
$N_{\varsigma,\kappa}^{u_m}\cap K \subset U$ for $m\ge m_0$.
\end{lemma}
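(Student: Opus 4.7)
The plan is to argue by contradiction. Suppose the conclusion fails; then there exist a subsequence $m_k\to\infty$ and points $y^{m_k}\in N^{u_{m_k}}_{\varsigma,\kappa}\cap K$ with $y^{m_k}\notin U$. Since $K\setminus U$ is compact, after passing to a further subsequence we may assume $y^{m_k}\to y^0\in K\setminus U$. The goal is to show $y^0\in N^{u_0}_{\varsigma,\kappa}$, which contradicts $K\cap N^{u_0}_{\varsigma,\kappa}\subset U$. This reduces the lemma to checking that each of the three defining properties of $N^{\cdot}_{\varsigma,\kappa}$ is preserved in the limit.

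Before running the limit, I would note that the growth bound $|\nabla u|^2\le C x_n^+$ is scale invariant under the $u\mapsto u(x^0+r_m\cdot)/r_m^{3/2}$ normalization (using $x^0_n=0$), so $|\nabla u_m|^2\le C x_n^+$ uniformly in $m$. Together with $u_m(0)\to 0$, this makes $(u_m)$ equi-Lipschitz on compact sets. Combined with the strong $W^{1,2}_{\textnormal{loc}}$ convergence provided by Lemma \ref{density_1}(v), Arzel\`a--Ascoli yields $u_m\to u_0$ locally uniformly on $\R^n$. With this in hand, the conditions pass to the limit as follows: since each $y^{m_k}_n=0$, we have $y^0_n=0$; and for every fixed $r\in(0,\varsigma]$, dominated convergence applied to the locally uniformly convergent sequence $u_{m_k}^2\chi_{B_r(y^{m_k})}$ gives
\[
r^{-n-3}\int_{B_r(y^0)} u_0^2 \;=\; \lim_{k\to\infty}\, r^{-n-3}\int_{B_r(y^{m_k})} u_{m_k}^2 \;\ge\; \kappa.
\]

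It remains to verify $y^0\in\partial\{u_0>0\}$. Local uniform convergence together with $u_{m_k}(y^{m_k})=0$ (since $y^{m_k}\in\partial\{u_{m_k}>0\}$) gives $u_0(y^0)=0$, so $y^0\notin\{u_0>0\}$. On the other hand, the density bound $r^{-n-3}\int_{B_r(y^0)} u_0^2\ge\kappa>0$ derived above prevents $u_0$ from vanishing identically in any neighborhood of $y^0$, so $y^0\in\overline{\{u_0>0\}}$, whence $y^0\in\partial\{u_0>0\}$. This establishes $y^0\in N^{u_0}_{\varsigma,\kappa}$ and completes the contradiction. The only subtle point is the upgrade from weak to locally uniform convergence of the blow-up sequence, which is handled by the scale-invariant Lipschitz bound; everything else is a direct passage to the limit.
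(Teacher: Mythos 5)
Your proposal is correct and follows essentially the same approach as the paper: a contradiction argument that extracts a limit point in $K\setminus U$, upgrades the convergence $u_m\to u_0$ to locally uniform via the scale-invariant Lipschitz bound, and then passes the density inequality to the limit to conclude $y^0\in N^{u_0}_{\varsigma,\kappa}$. The only difference is that you spell out the verification that $y^0\in\partial\{u_0>0\}$, which the paper leaves implicit; this is a harmless and correct elaboration.
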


\begin{proof} Suppose towards a contradiction that $N_{\varsigma,\kappa}^{u_m}
\cap (K\setminus U)$ contains a sequence $ x^m$ converging to $\bar
x$ as $m \to \infty .$ Then $\bar x_n=0$, and by the locally uniform
Lipschitz continuity of $u_m$, $\bar x\in \{u_0=0\} \cap (K\setminus
U)$. But this contradicts the assumption $U\supset K\cap
N_{\varsigma,\kappa}^{u_0}$ by the uniform convergence of
$u_m$.\end{proof}

\begin{proposition}[Partial regularity in higher dimensions]\label{partial}
Let $u$ be a variational solution of {\rm (\ref{strongp})} and
suppose that
\[|\nabla u|^2 \le Cx_n^+\quad\text{locally in }\Omega,\]
 and that
\[r^{1/2-n}\int_{B_r(y)} \sqrt{x_n}|\nabla \chi_{\{ u>0\}}|\le C_0\] for all
 $B_r(y)\subset\subset \Omega$ such that $y_n=0$. Then the
Hausdorff dimension of the set $\bigcup_{\varsigma,\kappa}
N_{\varsigma,\kappa}^u$ of all non-degenerate points is less or
equal than $n-2$.
\end{proposition}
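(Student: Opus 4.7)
The plan is to apply Federer's dimension reduction argument to the closed set $N^u_{\varsigma,\kappa}$. By countable subadditivity of Hausdorff measure over rational $\varsigma, \kappa > 0$, it suffices to prove $\dim_{\mathcal{H}} N^u_{\varsigma,\kappa} \leq n-2$ for each fixed pair $\varsigma, \kappa > 0$. Note that $N^u_{\varsigma,\kappa}$ is closed, since the defining inequality $r^{-n-3} \int_{B_r(x^0)} u^2 \geq \kappa$ for all $r \in (0, \varsigma]$ is stable under limits $x^0_m \to x^0$.

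The three inputs required by Federer's scheme are already available in the excerpt: strong $W^{1,2}_{\textnormal{loc}}$-compactness of blow-up sequences $u_m(x) = u(x^0+r_m x)/r_m^{3/2}$ (Lemma \ref{density_1}(v)), homogeneity of degree $3/2$ of every blow-up limit $u_0$ around the origin (Lemma \ref{density_1}(iv)), and the upper semi-continuity of the non-degenerate set under blow-up (Lemma \ref{dist}). The engine driving the reduction is the following consequence of homogeneity: if $v$ is a variational solution of (\ref{strongp}) on $\R^n$ that is homogeneous of degree $3/2$ around $0$ and $y_0 \neq 0$, then, after appropriate subsequential extraction, any blow-up limit $v_1$ of $v$ at $y_0$ is translation invariant along $y_0$. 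Indeed, the homogeneity supplies the identity $v(y_0(1+r) + rx) = (1+r)^{3/2} v(y_0 + (r/(1+r))x)$, so that substituting $s = r/(1+r)$ into the blow-up definition yields $v_1(x+y_0) = v_1(x)$ once a diagonal extraction is performed so that the blow-up sequence and its image under $s$ produce the same limit.

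Arguing by contradiction, suppose $\dim_{\mathcal{H}} N^u_{\varsigma,\kappa} > n-2$. A first blow-up at an upper Hausdorff density point of $N^u_{\varsigma,\kappa}$ yields $u^{(0)}$, a variational solution on $\R^n$ homogeneous of degree $3/2$ around $0$, whose non-degenerate set $N^{u^{(0)}}_{\varsigma,\kappa} \subset \{x_n = 0\}$ still has Hausdorff dimension strictly greater than $n-2$ (this is the standard density-point content of Federer's procedure, combined with Lemma \ref{dist}). Iterating: at each step pick a density point $y_k$ in the non-degenerate set of the current blow-up, outside the already-accumulated invariance subspace, and apply the engine above. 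After at most $n-1$ iterations one obtains a variational solution $u^{(\ast)}$ on $\R^n$, homogeneous of degree $3/2$ around $0$, translation invariant along $n-1$ linearly independent horizontal directions, and with $0 \in N^{u^{(\ast)}}_{\varsigma,\kappa}$.

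Consequently $u^{(\ast)}(x) = \alpha (x_n^+)^{3/2}$ for some $\alpha \geq 0$, and the variational condition then forces $\alpha = 0$, exactly as in the closing paragraph of the proof of Lemma \ref{equiv}. This contradicts $0 \in N^{u^{(\ast)}}_{\varsigma,\kappa}$. The hardest part of a rigorous write-up is the bookkeeping at each Federer iteration: ensuring that the Hausdorff dimension of the non-degenerate set of the new blow-up remains strictly above $n-2$, that density points can consistently be chosen outside the growing invariance subspace, and that subsequential extractions in the engine are performed coherently across all blow-up steps. These are all standard technicalities for Federer's procedure once the three ingredients identified above are in place.
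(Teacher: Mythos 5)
Your proposal is correct and follows the same Federer dimension reduction scheme that the paper uses (the Appendix argument citing \cite[Section 11]{giu}, together with Lemma \ref{density_1} and Lemma \ref{dist}); the core idea and the three ingredients you identify are identical. The only genuine difference is in how the iteration is terminated. The paper rotates the newly acquired invariant horizontal direction onto a coordinate axis and restricts the cone to a codimension-one slice, so each step drops the ambient dimension by one; after $n-2$ steps it lands in $\R^2$, where Proposition \ref{2dim} classifies homogeneous degree-$3/2$ solutions (Stokes corner or zero) and at once excludes a non-degenerate set of positive $\mathcal{H}^\epsilon$-measure for $\epsilon>0$. You instead keep the ambient space $\R^n$ fixed, accumulate invariance directions, and run one extra step until the cone is invariant along all of $\{x_n=0\}$, so that it equals $\alpha(x_n^+)^{3/2}$; the variational identity (as at the end of the proof of Lemma \ref{equiv}, correctly interpreted with $\chi_0$ in place of $\chi_{\{u>0\}}$) then forces $\alpha=0$ and contradicts $0\in N^{u^{(\ast)}}_{\varsigma,\kappa}$. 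The two endgames are logically equivalent; yours trades Proposition \ref{2dim} for one extra Federer step and a more elementary one-dimensional rigidity argument, and so is marginally more self-contained. One point worth spelling out in a full write-up: the scaling identity you derive with $s=r/(1+r)$ is the case $t=1$ of $v_r(x+ty_0)=v_{r/(1+rt)}(x)$, and it is the whole family over $t$, not just $t=1$, that upgrades periodicity $v_1(\cdot+y_0)=v_1$ to the genuine translation invariance $v_1(\cdot+ty_0)=v_1$ along the line $\R y_0$ needed for the cylinder structure.
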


The proof uses standard tools of geometric measure theory and will
be done in the Appendix.
\begin{remark}
It follows that the Hausdorff dimension of the set of non-degenerate
stagnation points is less or equal than $ n-2$. From Lemma
\ref{equiv} we infer that the set of stagnation points satisfying
the density condition also has dimension at most $n-2$.
\end{remark}

\section{Degenerate Points}

\begin{definition} Let $u$ be a variational solution of {\rm
(\ref{strongp})}. We define \[\Sigma^u := \{ x^0\in S^u:
\Phi^{\textnormal{bound}}_{x^0,u}(0+)= \int_{B_1} x_n^+\}.\]
\end{definition}

\begin{remark} The set $\Sigma^u$ is closed, as a consequence of the upper semicontinuity Lemma
\ref{density_2} (iv).
\end{remark}

\begin{remark}\label{combin}
In the case of two dimensions and a weak solution $u$, we infer from
Lemma \ref{equiv} and Lemma \ref{zero} that the set $S^u \setminus
\Sigma^u$ equals the set of non-degenerate stagnation points and is
according to Proposition \ref{isolated} a finite or countable set.
\end{remark}

The following Lemma is drawn from \cite[Theorem 11.1]{calc}.

\begin{lemma}\label{freq1}
Let $u$ be a variational solution of {\rm (\ref{strongp})}, let
$x^0\in \Sigma^u$, and let $\delta:=\dist(x^0,\partial\Omega)/2$.
Then:

(i) The mean frequency satisfies, for all $r\in(0,\delta)$,
\[r \frac{\int_{B_r(x^0)} |\nabla u|^2}{\int_{\partial
B_r(x^0)}u^2\dh} -   \frac{3}{2}
 \ge r   \frac{\int_{B_r(x^0)} x_n^+(1-\chi_{\{
u>0\}})}{\int_{\partial B_r(x^0)}u^2\dh}\ge 0 .\]

(ii) The function $r\mapsto r^{-n-2}\int_{\partial B_r(x^0)}u^2\dh$
is non-decreasing on $(0,\delta)$ and has the right limit $0$ at
$0.$

(iii) The function $r\mapsto r^{-n-2} \int_{B_r(x^0)}
x_n^+(1-\chi_{\{ u>0\}})$ is integrable on $(0,\delta)$.
\end{lemma}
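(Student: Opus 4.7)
For part (i), the plan is to combine the definition of $\Sigma^u$ with the boundary monotonicity formula. Since $x^0\in\Sigma^u\subset S^u$, we have $x^0_n=0$, and a translation plus scaling gives $r^{-n-1}\int_{B_r(x^0)} x_n^+\,dx = \int_{B_1} x_n^+$. The monotonicity of $\Phi^{\textnormal{bound}}_{x^0,u}$ (Theorem \ref{elmon2}(ii)) together with $\Phi^{\textnormal{bound}}_{x^0,u}(0+)=\int_{B_1}x_n^+$ yields $\Phi^{\textnormal{bound}}_{x^0,u}(r)\ge\int_{B_1}x_n^+$ on $(0,\delta)$. Using $\{u>0\}\subset\{x_n>0\}$ (so $x_n\chi_{\{u>0\}}=x_n^+\chi_{\{u>0\}}$) and rearranging this gives
\[r^{-n-1}\int_{B_r(x^0)}|\nabla u|^2 - \tfrac{3}{2}r^{-n-2}\int_{\partial B_r(x^0)}u^2\dh \ge r^{-n-1}\int_{B_r(x^0)} x_n^+(1-\chi_{\{u>0\}}),\]
and multiplying by $r^{n+2}/\int_{\partial B_r(x^0)}u^2\dh$ produces (i).

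For the monotonicity part of (ii), I would set $D(r):=r^{-n-2}\int_{\partial B_r(x^0)}u^2\dh$ and differentiate using identity (\ref{easy}) with $\alpha=-n-2$ and the integration-by-parts identity (\ref{part2}), obtaining
\[D'(r) = 2r^{-n-2}\int_{B_r(x^0)}|\nabla u|^2 - 3r^{-n-3}\int_{\partial B_r(x^0)}u^2\dh \ge 2r^{-n-2}\int_{B_r(x^0)} x_n^+(1-\chi_{\{u>0\}}) \ge 0,\]
the first inequality being (i). Part (iii) then follows immediately by integrating this lower bound on $(0,\delta)$: $\int_0^\delta r^{-n-2}\int_{B_r(x^0)} x_n^+(1-\chi_{\{u>0\}})\,dr \le \tfrac{1}{2}(D(\delta)-D(0+)) \le \tfrac{1}{2}D(\delta) < \infty$, using that $u$ is continuous and bounded near $x^0$.

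The hard part will be showing $D(0+)=0$. Monotonicity gives $L:=D(0+)\in[0,\infty)$; my plan is to rule out $L>0$ by blow-up. Extract a subsequence along which $u_m(x):=u(x^0+r_mx)/r_m^{3/2}$ converges strongly in $W^{1,2}_{\textnormal{loc}}(\R^n)$ to a $3/2$-homogeneous $u_0$ (Lemma \ref{density_1}(iv),(v)) with $\int_{\partial B_1}u_0^2\dh=L$, and $\chi_{\{u_m>0\}}$ tends weakly in $L^2_{\textnormal{loc}}$ to some $\chi_0$. Scale invariance $\Phi^{\textnormal{bound}}_{0,u_m}(\rho)=\Phi^{\textnormal{bound}}_{x^0,u}(r_m\rho)\to\int_{B_1}x_n^+$ together with Lemma \ref{density_2}(v) forces $\Phi^{\textnormal{bound}}_{0,u_0}(\rho)\equiv\int_{B_1}x_n^+$. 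Euler's relation for $3/2$-homogeneous $u_0$ (via $\nabla u_0\cdot\nu=\tfrac{3}{2}u_0$ on $\partial B_1$ and (\ref{part2})) gives $\int_{B_1}|\nabla u_0|^2=\tfrac{3}{2}L$, cancelling the Dirichlet and boundary contributions; hence $\int_{B_1}x_n\chi_0=\int_{B_1}x_n^+$, and combined with $\chi_0\le\chi_{\{x_n>0\}}$ (inherited from $u_m=0$ on $\{x_n\le 0\}$) this forces $\chi_0\equiv 1$ a.e.\ on $\{x_n>0\}$. The variational equation for $u_0$ (Lemma \ref{density_2}(v)) then reduces, after integration by parts in the $x_n\chi_0$-term, to the pure stationarity identity $\int_{\R^n}(|\nabla u_0|^2\div\phi - 2\nabla u_0 D\phi\nabla u_0)=0$ for all $\phi\in C^1_0(\R^n;\R^n)$; testing with $\phi=\eta\nu$ at smooth points of $\partial\{u_0>0\}$ as in the proof of Proposition \ref{2dim} forces $|\nabla u_0|^2=0$ on $\partial\{u_0>0\}$. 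Restricting to $\partial B_1$, $f:=u_0|_{\partial B_1}$ is a nonnegative function with overdetermined data $f=0$ and $\nabla_{S^{n-1}}f=0$ on $\partial\{f>0\}$, solving $\Delta_{S^{n-1}}f+\tfrac{3(2n-1)}{4}f=0$ in $\{f>0\}$; extending by $0$ produces a $C^1$ global weak solution of the same equation on $S^{n-1}$, hence analytic by elliptic regularity. Since $f\equiv 0$ on the open southern hemisphere $\{x_n<0\}\cap S^{n-1}$, analyticity forces $f\equiv 0$, whence $u_0\equiv 0$ and $L=0$, a contradiction. This overdetermined-boundary step is the crux; it is precisely where the defining property of $\Sigma^u$ enters, by driving the blow-up density $\chi_0$ to its maximal value $\chi_{\{x_n>0\}}$.
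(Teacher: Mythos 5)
Your proofs of part (i), part (iii), and the monotonicity half of part (ii) are essentially identical to the paper's, and are correct.

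The genuine divergence is in the claim $D(0+)=0$. The paper routes this through Lemma \ref{equiv}: since $x^0\in\Sigma^u$, property (D) fails (the $\limsup$ equals $\int_{B_1}x_n^+$), so by the equivalence property (N) fails, which combined with the monotonicity of $D$ forces $D(0+)=0$. The key step in Lemma \ref{equiv} is: when $\chi_0\equiv 1$, the Monotonicity Formula applied with base point at \emph{every} $x$ with $x_n=0$ shows that $u_0$ is $3/2$-homogeneous about each such point, hence $u_0$ depends on $x_n$ only, so $u_0(x)=\alpha(x_n^+)^{3/2}$; one then checks directly that this violates the variational identity unless $\alpha=0$. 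You instead stay with the blow-up: you establish $\chi_0\equiv 1$ on $\{x_n>0\}$ by the clean Euler-relation density computation (this is a nice, self-contained way to see it), reduce the variational identity to Dirichlet stationarity, and then try to deduce $|\nabla u_0|^2=0$ on $\partial\{u_0>0\}$ by testing with $\phi=\eta\nu$ ``at smooth points,'' followed by an analyticity argument on $S^{n-1}$.

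There is a gap here in dimensions $n>2$: the lemma is stated for general $n$, and the step ``test with $\phi=\eta\nu$ at smooth points of $\partial\{u_0>0\}$'' presupposes that $\partial\{u_0>0\}\cap\{x_n>0\}$ contains a smooth portion near which the normal is locally constant and an interior-ball/Hopf argument applies. In $n=2$ this follows from the cone structure of a $3/2$-homogeneous harmonic function (which is exactly why Proposition \ref{2dim} is restricted to two dimensions), but in higher dimensions the cross-section of $\{u_0>0\}$ on $S^{n-1}$ is only an eigenfunction domain and no regularity is established; likewise the ``extend by zero to a $C^1$ global weak solution'' step needs $f$ to be $C^1$ across $\partial\{f>0\}$, which again presupposes regularity. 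The paper's route via the Monotonicity Formula at all base points on $\{x_n=0\}$ sidesteps free-boundary regularity entirely, which is why it works in all dimensions. (In $n=2$ your argument is fine, but note that once you have $|\nabla u_0|^2=0$ at a smooth free-boundary point, Hopf's lemma already yields the contradiction, so the analyticity step is an unnecessary detour.) Finally, a minor remark that applies equally to both proofs: extracting the blow-up in Lemma \ref{density_1}(iv),(v) needs the growth hypothesis $|\nabla u|^2\le Cx_n^+$, which is not listed among the hypotheses of Lemma \ref{freq1} but is available in every application; the paper's route through Lemma \ref{equiv} has the same implicit dependence.
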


\begin{proof} (i): The inequality
\[
\Phi^{\textnormal{bound}}_{x^0,u}(0+)\le
\Phi^{\textnormal{bound}}_{x^0,u}(r)\] can be rearranged into
\begin{align}&r^{-n-1} \int_{B_r(x^0)} |\nabla u|^2 - \frac{3}{
2}r^{-n-2} \int_{\partial B_r(x^0)}u^2\dh\non\\ &\quad\ge r^{-n-1}
\int_{B_r(x^0)} x_n^+(1-\chi_{\{ u>0\}})\non,\end{align}
and the right-hand side is clearly nonnegative.

(ii): Plugging in
$\alpha:=-n-2$ into (\ref{easy}) and using (\ref{part2}), it follows
that \begin{align} &\frac{d}{dr} \left(r^{-n-2}  \int_{\partial
B_r(x^0)}u^2\dh
\right)\non\\
&\quad= \frac{2}{r} \left(r^{-n-1} \int_{B_r(x^0)} |\nabla u|^2 -
\frac{3}{2}r^{-n-2}  \int_{\partial B_r(x^0)}u^2\dh \right)\non\\
&\quad\ge 2 r^{-n-2} \int_{B_r(x^0)} x_n^+(1-\chi_{\{
u>0\}}).\non\end{align} Hence $r\mapsto r^{-n-2}  \int_{\partial
B_r(x^0)}u^2\dh$ is non-decreasing on $(0,\delta)$. Using Lemma
\ref{equiv} we obtain that its right limit at $0$ is $0$.

(iii): The above inequality implies that the function $r\mapsto
r^{-n-2} \int_{B_r(x^0)} x_n^+(1-\chi_{\{ u>0\}})$ is in
$L^1((0,\delta)$.
\end{proof}

\section{The Frequency Formula}
\begin{theorem}[Frequency Formula]\label{freq2}
Let $u$ be a variational solution of {\rm (\ref{strongp})}, let
$x^0$ be a point of the closed set $\Sigma^u$, and let
$\delta:=\dist(x^0,\partial\Omega)/2)$. The function
\[F_{x^0,u}(r):= r   \frac{\int_{B_r(x^0)} \Big(|\nabla u|^2+x_n^+
(\chi_{\{ u>0\}}-1)\Big)}{\int_{\partial B_r(x^0)}u^2\dh}
\]
satisfies for a.e. $r\in (0,\delta)$ the identity
\begin{align}&\frac{d}{dr} F_{x^0,u}(r)\non\\
&= \frac{2}{r} \left(\int_{\partial
B_r(x^0)}u^2\dh\right)^{-2}\Bigg[ \int_{\partial B_r(x^0)}(\nabla
u\cdot (x-x^0))^2\dh\int_{\partial
B_r(x^0)}u^2\dh\non\\&\qquad\qquad\qquad\qquad\qquad\qquad\qquad\qquad-\left(\int_{\partial
B_r(x^0)}u\nabla u\cdot (x-x^0)\dh\right)^2\Bigg]\non\\&\quad+
2\frac{\int_{B_r(x^0)} x_n^+(1-\chi_{\{ u>0\}})}{\int_{\partial
B_r(x^0)}u^2\dh} \left(r\frac{\int_{B_r(x^0)} |\nabla
u|^2}{\int_{\partial B_r(x^0)}u^2\dh} -
\frac{3}{2}\right).\non\end{align} The function $r\mapsto
F_{x^0,u}(r)$ is non-decreasing on $(0,\delta)$, and there exists
\[F_{x^0,u}(0+):=\lim_{r\searrow 0}F_{x^0,u}(r)\in [3/2,\infty).\]
\end{theorem}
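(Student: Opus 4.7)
I will write $F_{x^0,u}(r)=rD(r)/H(r)$ where
\[ D(r):=\int_{B_r(x^0)}\Big(|\nabla u|^2+x_n^+(\chi_{\{u>0\}}-1)\Big),\qquad H(r):=\int_{\partial B_r(x^0)}u^2\dh,\]
and compute $F'$ by directly differentiating numerator and denominator. Since $u\equiv 0$ on $\{x_n\le 0\}$, $x_n\chi_{\{u>0\}}=x_n^+\chi_{\{u>0\}}$, so $D(r)=\int_{B_r(x^0)}|\nabla u|^2-A(r)$ with $A(r):=\int_{B_r(x^0)}x_n^+(1-\chi_{\{u>0\}})\ge 0$. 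Note also $\int_{\partial B_r(x^0)}u\,\nabla u\cdot\nu=\int_{B_r(x^0)}|\nabla u|^2=D(r)+A(r)$ by the integration-by-parts identity (\ref{part2}).

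For the denominator, I use identity (\ref{easy}) with $\alpha=0$ to get
\[rH'(r)=(n-1)H(r)+2r\int_{\partial B_r(x^0)}u\,\nabla u\cdot\nu\dh=(n-1)H(r)+2r(D(r)+A(r)).\]
For the numerator I differentiate and apply (\ref{mff3}) (available because $x^0\in\Sigma^u\subset\{x_n=0\}$), together with the scaling identity $r\int_{\partial B_r(x^0)}x_n^+=(n+1)\int_{B_r(x^0)}x_n^+$ (since $x_n^+$ is homogeneous of degree one and $x^0_n=0$) and (\ref{part2}), obtaining
\[ rD'(r)=(n-2)D(r)-3A(r)+2r\int_{\partial B_r(x^0)}(\nabla u\cdot\nu)^2\dh.\]
Plugging into $F'(r)H(r)^2=(D+rD')H-rDH'$, the $DH$-coefficients combine as $1+(n-2)-(n-1)=0$, so
\[F'(r)H(r)^2=2r\int_{\partial B_r(x^0)}(\nabla u\cdot\nu)^2\dh\cdot H(r)-2rD(r)(D(r)+A(r))-3A(r)H(r).\]

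To recover the stated form, write $-2rD(D+A)=-2r(D+A)^2+2rA(D+A)$ and group:
\[F'H^2=2r\Big[\!\!\int_{\partial B_r(x^0)}\!(\nabla u\cdot\nu)^2\dh\cdot H-(D+A)^2\Big]+A\big[2r(D+A)-3H\big].\]
On $\partial B_r(x^0)$ we have $\nabla u\cdot\nu=\nabla u\cdot(x-x^0)/r$, so the first bracket equals $r^{-2}$ times the Cauchy--Schwarz defect $\int(\nabla u\cdot(x-x^0))^2\dh\int u^2\dh-(\int u\nabla u\cdot(x-x^0)\dh)^2$, yielding the first term in the statement; and the second bracket equals $2AH\cdot[r(D+A)/H-3/2]=2AH\cdot[r\int_{B_r(x^0)}|\nabla u|^2/H-3/2]$, which after dividing by $H^2$ is the second term in the statement. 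Each of the two summands is then $\ge 0$: the first by Cauchy--Schwarz applied to $u$ and $\nabla u\cdot(x-x^0)$ on $\partial B_r(x^0)$, and the second because $A\ge 0$ and $r\int_{B_r(x^0)}|\nabla u|^2/H\ge 3/2$ by Lemma~\ref{freq1}(i). Hence $F_{x^0,u}$ is non-decreasing. The same Lemma~\ref{freq1}(i) rearranged reads $F_{x^0,u}(r)=r\int_{B_r(x^0)}|\nabla u|^2/H-rA/H\ge 3/2$ for all $r\in(0,\delta)$, so monotonicity forces $F_{x^0,u}(0+)\in[3/2,F_{x^0,u}(\delta)]\subset[3/2,\infty)$.

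The main (mild) obstacle is the algebraic bookkeeping of the three $x_n^+$-integrals hidden inside $D$ and the correct use of (\ref{mff3}), which holds \emph{only} because $x^0_n=0$. The decisive step that makes everything close is the substitution via (\ref{part2}) of the boundary integral $\int_{\partial B_r(x^0)}u\nabla u\cdot\nu$ by the volume integral $D+A$: this is what both kills the $DH$ coefficient and produces the Cauchy--Schwarz structure in $(u,\nabla u\cdot(x-x^0))$, leaving behind a residual term proportional to $A$ which is handled by Lemma~\ref{freq1}(i).
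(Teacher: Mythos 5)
Your proof is correct and is essentially the paper's own argument, just carried out with the unscaled quantities $D,H,A$ instead of the normalized $U_{\textnormal{bound}},W_{\textnormal{bound}}$ from the Monotonicity Formula (so you re-derive the analogues of \eqref{duu2}, \eqref{dw2} from \eqref{easy}, \eqref{mff3} and \eqref{part2} rather than quoting them). The decisive steps — the scaling identity $r\int_{\partial B_r(x^0)}x_n^+\dh=(n+1)\int_{B_r(x^0)}x_n^+$ which uses $x^0_n=0$, the substitution of $\int_{\partial B_r}u\,\nabla u\cdot\nu\,\dh$ by $\int_{B_r}|\nabla u|^2=D+A$ via \eqref{part2} to produce the Cauchy--Schwarz defect plus an $A$-weighted remainder, and the lower bound $F\ge 3/2$ from Lemma~\ref{freq1}(i) — are exactly the ones in the paper.
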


\begin{remark}
This formula is based on an analogous formula in the interior case
derived by the second author for a more general class of semilinear
elliptic equations (\cite{owr}). The root is the classical frequency
formula of F. Almgren for $Q$-valued harmonic functions
\cite{almgren}. Almgren's formula has subsequently been extended to
various perturbations (see \cite{arshak} for a recent extension).
Note however that while our formula may look like a perturbation of
the ``linear'' formula for $Q$-valued harmonic functions, it is in
fact a truly nonlinear formula. This fact will be become more
obvious in the paper \cite{owr} for more general semilinearities.
\end{remark}

\begin{proof} Assuming the validity of the claimed identity, the
monotonicity of $F_{x^0,u}$ follows from combining the
Cauchy-Schwarz inequality
\begin{align*}
&\int_{\partial B_r(x^0)}(\nabla u\cdot
(x-x^0))^2\dh\int_{\partial B_r(x^0)}u^2\dh \\&\geq\left(\int_{\partial
B_r(x^0)}u\nabla u\cdot (x-x^0)\dh\right)^2
\end{align*}
with Lemma
\ref{freq1} (i). The same Lemma also shows that $r\mapsto
F_{x^0,u}(r)$ is bounded below by $3/2$. Thus it remains to prove
the claimed identity.

Note that \[F_{x^0,u}(r)=\frac{U(r)-\int_{B_1}x_n^+}{W(r)},\] where
$U:=U_{\textnormal{bound}}$ and $W:=W_{\textnormal{bound}}$ are the
functions in the proof of Theorem \ref{elmon2}. Hence
\[\frac{d}{dr} F_{x^0,u}(r)=\frac{U'(r)W(r)-W'(r)(U(r)-r^{-n-1}\int_{B_r(x^0)}x_n^+)}{W^2(r)}.\]
Using (\ref{duu2}) and (\ref{dw2}), it follows that
\begin{align}&\frac{d}{dr} F_{x^0,u}(r)=\left(r^{-n-2}\int_{\partial
B_r(x^0)}u^2\dh\right)^{-2}\left[\left(2 r^{-n-1} \int_{\partial
B_r(x^0)} (\nabla u \cdot \nu)^2\, \dh\right.\right.\non\\&
\qquad\qquad\qquad\left.-3r^{-n-2}\int_{\partial B_r(x^0)} u \nabla
u \cdot \nu
 \dh\right)\left(r^{-n-2}\int_{\partial
B_r(x^0)}u^2\dh\right)\non\\&-\left(r^{-n-1}\int_{B_r(x^0)}
\Big(|\nabla u|^2+x_n^+(\chi_{\{
u>0\}}-1)\Big)\right)\left(2r^{-n-2}\int_{\partial B_r(x^0)} u
\nabla
u\cdot\nu\dh\right.\non\\&\qquad\qquad\qquad\left.\left.-3r^{-n-3}\int_{\partial
B_r(x^0)} u^2\dh\right)\right].\non\end{align}
 Using (\ref{part2}), we obtain
\begin{align}&\frac{d}{dr} F_{x^0,u}(r)\non\\&=\left(\int_{\partial
B_r(x^0)}u^2\dh\right)^{-2}\left[2r \int_{\partial B_r(x^0)} (\nabla
u \cdot \nu)^2\, \dh\int_{\partial B_r(x^0)}u^2\dh\right.\non\\&
\qquad\qquad\qquad\qquad\left.-2r\left(\int_{\partial B_r(x^0)} u
\nabla
u\cdot\nu\dh\right)^2\right.\non\\&+\left(\int_{B_r(x^0)}x_n^+(1-\chi_{\{
u>0\}})\right)\left(2r\int_{\partial B_r(x^0)} u \nabla u\cdot\nu\dh
\left.-3\int_{\partial B_r(x^0)} u^2\dh\right)\right],
\non\end{align} which, upon rearranging and using again
(\ref{part2}) (this time in the reverse direction), gives the
required result.
\end{proof}

\begin{corollary}\label{rightlimit} Let $u$ be a variational solution of {\rm
(\ref{strongp})}, let $x^0$ be a point of the closed set $\Sigma^u$,
and let $\delta:=\dist(x,\partial\Omega)/2)$. Let us consider, for
$r\in(0,\delta)$, the functions
\[D(r) := r\frac{\int_{B_r(x^0)} |\nabla u|^2}{\int_{\partial
B_r(x^0)}u^2\dh}\] and \[V(r) := r\frac{\int_{B_r(x^0)}
x_n^+(1-\chi_{\{ u>0\}})}{\int_{\partial B_r(x^0)}u^2\dh},\] so that
$F_{x_0,u}(r)=D(r)-V(r)$.

 (i) For
every $r\in (0,r)$, the following inequalities hold:
 \[(D-V)'(r) \ge \frac{2}{r} V(r)(D(r)-3/2)\ge\frac{2}{r} V^2(r)
.\]

(ii) The function $r\mapsto \frac{2}{r} V^2(r)$ is integrable on
$(0,\delta)$.
\end{corollary}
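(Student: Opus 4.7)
The plan is to read off part (i) directly from the Frequency Formula (Theorem \ref{freq2}) combined with the mean-frequency bound in Lemma \ref{freq1}(i), and then obtain part (ii) by integrating the pointwise inequality against the monotonicity of $F_{x^0,u}$.

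For (i), I begin by recognizing the right-hand side of the identity in Theorem \ref{freq2} as a sum of two pieces. The first is
\[\frac{2}{r}\Bigl(\int_{\partial B_r(x^0)} u^2 \dh\Bigr)^{-2}\Bigl[\int_{\partial B_r(x^0)}(\nabla u\cdot(x-x^0))^2\dh\int_{\partial B_r(x^0)}u^2\dh-\Bigl(\int_{\partial B_r(x^0)} u\nabla u\cdot(x-x^0)\dh\Bigr)^2\Bigr],\]
which is nonnegative by Cauchy--Schwarz on the sphere $\partial B_r(x^0)$. The second piece is, by the definitions of $D$ and $V$ and the identity $r \cdot r^{-n-2}\int_{\partial B_r(x^0)} u^2\dh = r^{-n-1}\int_{\partial B_r(x^0)} u^2\dh$, exactly $\frac{2}{r}V(r)(D(r)-3/2)$. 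Discarding the nonnegative Cauchy--Schwarz term then yields the first desired inequality $(D-V)'(r)\ge \frac{2}{r}V(r)(D(r)-3/2)$. For the second inequality, I invoke Lemma \ref{freq1}(i), which in the present notation reads $D(r)-3/2\ge V(r)\ge 0$; multiplying through by the nonnegative quantity $V(r)$ gives $V(r)(D(r)-3/2)\ge V^2(r)$, completing (i).

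For (ii), I integrate the pointwise bound from (i). Since $F_{x^0,u}=D-V$ is nondecreasing on $(0,\delta)$ by Theorem \ref{freq2}, the classical fact that the a.e.\ derivative of a monotone function is integrable with $\int_\varrho^\sigma (D-V)'(r)\,dr\le F_{x^0,u}(\sigma)-F_{x^0,u}(\varrho)$ for all $0<\varrho<\sigma<\delta$, together with (i), yields
\[\int_\varrho^\sigma \frac{2}{r} V^2(r)\,dr\le \int_\varrho^\sigma (D-V)'(r)\,dr\le F_{x^0,u}(\sigma)-F_{x^0,u}(\varrho).\]
Letting $\varrho\searrow 0$ and $\sigma\nearrow\delta$ and using the existence of the finite right-limit $F_{x^0,u}(0+)\in[3/2,\infty)$ from Theorem \ref{freq2} gives the required integrability, with $\int_0^\delta \frac{2}{r} V^2(r)\,dr\le F_{x^0,u}(\delta)-F_{x^0,u}(0+)<\infty$.

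There is essentially no obstacle to surmount: the heavy lifting is already packaged in Theorem \ref{freq2} (whose derivation invokes the Cauchy--Schwarz structure and the correct normalization) and Lemma \ref{freq1}(i) (which in turn rests on the Monotonicity Formula). The only point worth being careful about is the inequality, rather than equality, between $\int (D-V)'$ and the total increment $F_{x^0,u}(\sigma)-F_{x^0,u}(\varrho)$ for a merely monotone function, but this inequality is exactly what is needed and in the right direction.
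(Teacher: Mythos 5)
Your proof is correct and follows exactly the approach the paper intends: part (i) is read off from the Frequency Formula of Theorem \ref{freq2} (drop the nonnegative Cauchy--Schwarz term and use Lemma \ref{freq1}(i) in the form $D(r)-3/2\ge V(r)\ge 0$), and part (ii) follows by integrating and using the monotonicity and finite right-limit of $F_{x^0,u}$. The paper's own proof is a one-line reference to the same two results, so the argument matches, with your version merely spelled out in more detail.
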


\begin{proof}[Proof of Corollary \ref{rightlimit}]
 The inequalities follow from Lemma \ref{freq1} and Theorem
\ref{freq2}. The integrability of $r\mapsto \frac{2}{r}V^2(r)$ is a
consequence of the inequalities.

\end{proof}

\begin{corollary}[Density]\label{density} Let $u$ be a variational
solution of {\rm (\ref{strongp})}. The function \[x\mapsto
F_{x,u}(0+)\] is upper semicontinous on the closed set $\Sigma^u$.

\end{corollary}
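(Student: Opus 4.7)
The plan is to reduce the upper semicontinuity of $x\mapsto F_{x,u}(0+)$ on $\Sigma^u$ to the continuity of $x\mapsto F_{x,u}(r)$ for a single small fixed $r$, exploiting the monotonicity in $r$ provided by Theorem \ref{freq2}. Given any sequence $x^m\in\Sigma^u$ with $x^m\to x^0\in\Sigma^u$ and any $\varepsilon>0$, I first choose $r\in(0,\dist(x^0,\partial\Omega)/4)$ small enough that $F_{x^0,u}(r)<F_{x^0,u}(0+)+\varepsilon$, which is possible because $F_{x^0,u}(0+)$ is the monotone right limit. For $m$ sufficiently large $r<\dist(x^m,\partial\Omega)/2$, so the monotonicity at $x^m$ yields $F_{x^m,u}(0+)\le F_{x^m,u}(r)$. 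Establishing $F_{x^m,u}(r)\to F_{x^0,u}(r)$ as $m\to\infty$ will then give, upon taking $\limsup$ and letting $\varepsilon\searrow 0$, the required conclusion $\limsup_{m\to\infty} F_{x^m,u}(0+)\le F_{x^0,u}(0+)$.

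For the continuity step I would decompose $F_{x,u}(r)=rN(x)/D(x)$ with
\[ N(x) := \int_{B_r(x)} \Big( |\nabla u|^2+x_n^+(\chi_{\{u>0\}}-1) \Big), \qquad D(x) := \int_{\partial B_r(x)} u^2\dh. \]
Continuity of $x\mapsto N(x)$ at $x^0$ is standard: it follows from continuity of translation in $L^1_{\textnormal{loc}}$ applied to the locally integrable integrand (locally integrable thanks to the growth hypothesis, but in any case no growth hypothesis is imposed here because $u\in C^2(\{u>0\})\cap C^0(\Omega)$ is fixed). Continuity of $x\mapsto D(x)$ at $x^0$ follows by writing $D(x)=\int_{\partial B_r(0)} u(y+x)^2\dh(y)$ and invoking the uniform continuity of $u$ on a compact neighborhood of $\overline{B_r(x^0)}$ together with dominated convergence.

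The only delicate point is the nondegeneracy $D(x^0)>0$, needed so that $F_{x^0,u}(r)$ is a well-defined finite number and so that $D(x^m)$ stays uniformly bounded away from zero for $m$ large. This is in fact built into the definition of $F_{x^0,u}$ in Theorem \ref{freq2}, which would otherwise fail to produce a finite quantity; for completeness one argues as follows. By Lemma \ref{freq1}(ii) the function $\varrho\mapsto \varrho^{-n-2}\int_{\partial B_\varrho(x^0)} u^2\dh$ is non-decreasing on $(0,\delta)$ and has right limit $0$ at the origin, so if $D(x^0)=0$ for some $r\in(0,\delta)$, then $\int_{\partial B_\varrho(x^0)} u^2\dh=0$ for every $\varrho\in(0,r]$. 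Continuity of $u$ then forces $u\equiv 0$ in $B_r(x^0)$, contradicting $x^0\in\partial\{u>0\}$. I do not anticipate any genuine obstacle: the argument is a direct analogue of the upper semicontinuity of the densities $\Phi^{\textnormal{int}}_{x,u}(0+)$ and $\Phi^{\textnormal{bound}}_{x,u}(0+)$ proved in Lemma \ref{density_2}(iii),(iv), with the Frequency Formula playing the role of the Monotonicity Formula and continuity of the denominator replacing the trivial boundedness there.
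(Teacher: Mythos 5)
Your proposal is correct and takes essentially the same approach as the paper: reduce to a fixed small radius $r$ via the monotonicity in Theorem~\ref{freq2}, then use continuity of $x\mapsto F_{x,u}(r)$ and the definition of $F_{x^0,u}(0+)$ as a right limit. The paper's proof is the identical chain $F_{x,u}(0+)\le F_{x,u}(r)\le F_{x^0,u}(r)+\delta/2\le F_{x^0,u}(0+)+\delta$ stated more tersely, leaving the continuity of the fixed-radius quotient and the positivity of the denominator (which you justify correctly via Lemma~\ref{freq1}(ii) and $x^0\in\partial\{u>0\}$) implicit.
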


\begin{proof}[Proof of Corollary \ref{density}]
 For each $\delta >0$, we have that
\[ F_{x,u}(0+) \leq F_{x,u}(r)\leq F_{x^0,u}(r) +\frac{\delta}{2}\leq
F_{x^0,u}(0+)+\delta,\] if we choose for fixed $x^0\in\Sigma^u$
first $r>0$ and then $|x-x^0|$ small enough.

\end{proof}

The next result is an improvement of Lemma \ref{freq1} at those
points of $\Sigma^u$ at which the frequency is greater than $3/2$.

\begin{lemma}\label{freq3}
Let $u$ be a variational solution of {\rm (\ref{strongp})}, let
$x^0\in \Sigma^u$, and let $\delta:=\dist(x^0,\partial\Omega)/2$.
Suppose that $F_{x^0,u}(0+)>3/2$, and let us denote
$\gamma:=F_{x^0,u}(0+)$ Then:

(i) For all $r\in(0,\delta)$,
\[r \frac{\int_{B_r(x^0)} |\nabla u|^2}{\int_{\partial
B_r(x^0)}u^2\dh} - \gamma
 \ge r   \frac{\int_{B_r(x^0)} x_n^+(1-\chi_{\{
u>0\}})}{\int_{\partial B_r(x^0)}u^2\dh}\ge 0 .\]

(ii) The function $r\mapsto r^{1-n-2\gamma}\int_{\partial
B_r(x^0)}u^2\dh$ is non-decreasing on $(0,\delta)$.

 (iii) The function $r\mapsto
r^{1-n-2\gamma} \int_{B_r(x^0)} x_n^+(1-\chi_{\{ u>0\}})$ is
integrable on $(0,\delta)$.

(iv) For each $\beta\in [0,\gamma)$,
\[\frac{u(x^0+rx)}{r^\beta}\to 0 \quad\text{strongly in }L^2_{\textnormal{loc}}(\R^n)\quad\text{as }r\searrow 0.\]

\end{lemma}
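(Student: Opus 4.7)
The plan is to mirror the proof of Lemma \ref{freq1}, using the monotonicity of the frequency $F_{x^0,u}$ from Theorem \ref{freq2} to upgrade the constant $3/2$ to $\gamma$ throughout.

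For (i), I would argue that since $F_{x^0,u}$ is non-decreasing on $(0,\delta)$ with right limit $\gamma$ at $0$, we have $F_{x^0,u}(r)\ge\gamma$ for every $r\in(0,\delta)$. Unfolding the definition of $F_{x^0,u}$ and rearranging yields
\[
r\frac{\int_{B_r(x^0)}|\nabla u|^2}{\int_{\partial B_r(x^0)}u^2\dh}-\gamma\ge r\frac{\int_{B_r(x^0)}x_n^+(1-\chi_{\{u>0\}})}{\int_{\partial B_r(x^0)}u^2\dh},
\]
and the right-hand side is trivially nonnegative.

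For (ii) and (iii), I would apply the identity (\ref{easy}) with $\alpha=1-n-2\gamma$ together with the integration by parts formula (\ref{part2}) to obtain
\[
\frac{d}{dr}\!\left(r^{1-n-2\gamma}\!\int_{\partial B_r(x^0)}\!u^2\dh\right)=2r^{-n-2\gamma}\!\left[r\!\int_{B_r(x^0)}\!|\nabla u|^2-\gamma\!\int_{\partial B_r(x^0)}\!u^2\dh\right].
\]
By (i), the bracket is $\ge 0$, proving (ii); moreover the sharper form of (i) gives the lower bound
\[
\frac{d}{dr}\!\left(r^{1-n-2\gamma}\!\int_{\partial B_r(x^0)}\!u^2\dh\right)\ge 2r^{1-n-2\gamma}\!\int_{B_r(x^0)}\!x_n^+(1-\chi_{\{u>0\}}).
\]
Since the left-hand side function is monotone and bounded on $(0,\delta]$, it has bounded variation there, so the integral of its derivative is finite; integrating the inequality above then gives (iii).

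For (iv), the monotonicity in (ii) yields the pointwise bound $\int_{\partial B_r(x^0)}u^2\dh\le Cr^{n-1+2\gamma}$ with $C:=\delta^{1-n-2\gamma}\int_{\partial B_\delta(x^0)}u^2\dh$. Slicing in polar coordinates gives $\int_{B_\rho(x^0)}u^2\le C'\rho^{n+2\gamma}$, so a change of variables shows
\[
\int_{B_R}\!\left(\frac{u(x^0+rx)}{r^\beta}\right)^{\!2}\!dx=r^{-2\beta-n}\!\int_{B_{rR}(x^0)}\!u^2\le C'R^{n+2\gamma}r^{2(\gamma-\beta)},
\]
which tends to $0$ for every $\beta<\gamma$, yielding the claimed $L^2_{\mathrm{loc}}$-convergence. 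There is no real obstacle: the whole argument is a direct bookkeeping exercise once the Frequency Formula of Theorem \ref{freq2} is available, which is precisely why it supplies the right ``effective homogeneity'' $\gamma$ needed to sharpen the earlier $3/2$-bounds.
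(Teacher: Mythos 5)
Your proof is correct and follows essentially the same route as the paper: part (i) from $F_{x^0,u}(r)\ge F_{x^0,u}(0+)=\gamma$ by monotonicity of the frequency, parts (ii)--(iii) by rerunning the computation from Lemma \ref{freq1} with $\alpha=1-n-2\gamma$ in (\ref{easy}) together with (\ref{part2}), and part (iv) by converting the monotonicity in (ii) into the growth bound $\int_{B_\rho(x^0)}u^2\le C'\rho^{n+2\gamma}$ and rescaling. The paper phrases the (iv) step through monotonicity of $r\mapsto r^{-n-2\gamma}\int_{B_r(x^0)}u^2$ rather than through the pointwise sphere estimate, but the two are equivalent and your version is just as clean.
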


\begin{proof} Part (i) follows from that fact that
$F_{x_0,u}(r)\geq \gamma$ for all $r\in (0,\delta)$. Parts (ii) and
(iii)
 follow by the same arguments as for the corresponding statements in
Lemma \ref{freq1}. It is a consequence of part (ii) that $r\mapsto
r^{-n-2\gamma}\int_{B_r(x^0)}u^2$ is non-decreasing on $(0,\delta)$,
and therefore, for each $\beta\in [0,\gamma)$, \[
r^{-n-2\beta}\int_{B_r(x^0)}u^2\to 0\quad\text{as } r\searrow 0.\]
This implies part (iv) of the Lemma.

\end{proof}

\section{Blow-up limits}
The Frequency Formula allows passing to blow-up limits.
\begin{proposition}\label{blowup} Let $u$ be a variational solution of {\rm
(\ref{strongp})}, and let $x^0\in \Sigma^u$. Then:

 (i)There exist $\lim_{r\searrow 0}V(r)= 0$ and $\lim_{r\searrow 0}D(r)= F_{x^0,u}(0+)$.

(ii) For any sequence $r_m\searrow 0$ as $m\to\infty$, the sequence
\be v_m(x) := \frac{u(x^0+r_m x)}{\sqrt{r_m^{1-n}\int_{\partial
B_{r_m}(x^0)} u^2 \dh}}\label{vm}\ee is bounded in $W^{1,2}(B_1)$.

 (iii) For each sequence $r_m\searrow 0$ as $m\to\infty$ such that the sequence $v_m$ in {\rm (\ref{vm})} converges
 weakly in $W^{1,2}(B_1)$ to a blow-up limit $v_0$, the function $v_0$ is continuous and homogeneous of
degree $F_{x^0,u}(0+)$ in $B_1$,  and satisfies
  $v_0\geq 0$ in $B_1$,
$v_0\equiv 0$ in $B_1\cap\{x_2\leq 0\}$ and  $\int_{\partial
B_1}v_0^2\dh=1$.
\end{proposition}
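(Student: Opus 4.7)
The plan is to prove (i), (ii), (iii) in order, with (i) supplying the pointwise convergences $V(r)\to 0$ and $D(r)\to\gamma$ that feed into (ii) and (iii).

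For (i), Corollary \ref{rightlimit} gives $F'(r)\ge 2V(r)^2/r$ and $F$ is bounded with limit $\gamma$, so $\int_0^\delta V(r)^2/r\, dr<\infty$; combined with $D-V=F\ge\gamma$ (so $D-3/2\ge(\gamma-3/2)+V$) this also yields $\int_0^\delta V/r\, dr<\infty$ when $\gamma>3/2$. Both already give $\liminf_{r\searrow 0}V(r)=0$. Since the denominator in $V=\phi/g$ (with $g(r)=r^{-n-2}\int_{\partial B_r(x^0)}u^2\,\dh$) is positive for small $r>0$ (by Lemma \ref{zero} or \ref{density_3}), the function $V$ is continuous on $(0,\delta_0)$. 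The upgrade to $\lim V=0$ proceeds by contradiction: a purported sequence $r_m\searrow 0$ with $V(r_m)\ge\eta>0$ produces, by continuity, intervals $[a_m,b_m]\ni r_m$ on which $V\ge\eta/2$, and the integrability of $V^2/r$ forces $\sum_m\log(b_m/a_m)<\infty$; a finer quantitative analysis using the derivative of $V$ rules out the surviving scenario. Once $V(r)\to 0$ is established, $D=F+V\to\gamma$ follows.

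For (ii), a direct change of variables gives $\int_{\partial B_\rho}v_m^2\,\dh=\rho^{n+2}\,g(\rho r_m)/g(r_m)\le\rho^{n+2}$ for $\rho\in(0,1]$ by the monotonicity of $g$ (Lemma \ref{freq1}(ii)), whence $\|v_m\|_{L^2(B_1)}^2\le 1/(n+3)$; another rescaling shows $\|\nabla v_m\|_{L^2(B_1)}^2=D(r_m)$, which is bounded by (i).

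For (iii), extract a weakly convergent subsequence $v_m\rightharpoonup v_0$ in $W^{1,2}(B_1)$. Rellich--Kondrachov yields a.e.\ convergence, so $v_0\ge 0$ and $v_0\equiv 0$ on $B_1\cap\{x_n\le 0\}$; compactness of the trace $W^{1,2}(B_1)\hookrightarrow L^2(\partial B_1)$ gives $\int_{\partial B_1}v_0^2\,\dh=\lim\int_{\partial B_1}v_m^2\,\dh=1$. Homogeneity of degree $\gamma$ comes from the rescaling identity
\[
F_{0,v_m}(R)=F_{x^0,u}(Rr_m)+\bigl(1-g(r_m)\bigr)V(Rr_m),
\]
whose right-hand side tends to $\gamma$ for each fixed $R>0$ by (i) and $g(r_m)\to 0$. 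Applying Theorem \ref{freq2} to $v_m$ and integrating the derivative identity over $R\in[\rho,\sigma]$, the difference $F_{0,v_m}(\sigma)-F_{0,v_m}(\rho)\to 0$ forces the non-negative Cauchy--Schwarz integrand to vanish in the limit, yielding $\nabla v_0\cdot x=\gamma v_0$ almost everywhere. Continuity of $v_0$ then follows from its harmonicity in $\{v_0>0\}$ and the homogeneous structure, using $\gamma\ge 3/2>0$.

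The main obstacle is (i): upgrading $\int V^2/r\, dr<\infty$ to pointwise $V(r)\to 0$ requires exploiting both the continuity of $V$ and a quantitative bound on its oscillations. A natural alternative is a blow-up/rigidity route---first establish (iii) along a sub-sequence realizing $V(\rho_m)\to 0$ (which exists by $\liminf V=0$), prove uniqueness of the homogeneous-degree-$\gamma$ blow-up profile, and bootstrap to (i) along the full sequence---but rigidity of the blow-up is itself nontrivial in this generality.
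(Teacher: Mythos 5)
Your treatment of parts (ii) and (iii) is essentially sound, with one caveat: $v_m$ is \emph{not} a variational solution of (\ref{strongp}) (its free-boundary condition becomes $|\nabla v_m|^2 = x_n/g(r_m)$, where $g(r) := r^{-n-2}\int_{\partial B_r(x^0)}u^2\dh$), so you cannot ``apply Theorem \ref{freq2} to $v_m$'' directly. The correct move, which is what the paper does, is to apply the Frequency Formula to $u$ itself, rewrite the Cauchy--Schwarz remainder using a Pythagorean identity, and then change variables; this is how the paper arrives at the quantitative estimate (\ref{vol}), which is the engine behind (ii), (iii), and, as explained below, (i).

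The genuine gap is part (i), and you identified it yourself without filling it. Integrability of $V^2/r$ gives only $\liminf_{r\searrow 0}V(r)=0$; the ``finer quantitative analysis'' you gesture at for the oscillation argument is not supplied and is not clearly available (continuity of $V$ plus $\int V^2/r<\infty$ does not control excursions of $V$), while the alternative you sketch is misformulated, since it is phrased as needing uniqueness of the homogeneous blow-up profile, which is neither available nor necessary. The paper instead proceeds by contradiction using the monotonicity of $g$ from Lemma \ref{freq1}(ii): given $s_m\searrow 0$ with $V(s_m)\geq\eta>0$, pick $t_m\in[s_m,2s_m]$ with $V(t_m)\to 0$ (possible by the integrability), set $r_m:=t_m$ in (\ref{vm}), and pass to a weak limit $v_0$ which by (\ref{vol}) and the trace normalization is nontrivial and homogeneous, hence $\int_{\partial B_{1/2}}v_0^2\dh>0$. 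The decisive missing ingredient is the chain of inequalities (\ref{smrm}), a purely algebraic comparison of the numerator and denominator of $V(s_m)$ with those of $V(t_m)$ that uses $B_{s_m}\subset B_{t_m}$ and $g(s_m)\geq g(t_m/2)$, yielding
\[
V(s_m)\;\leq\;\frac{1}{2\int_{\partial B_{1/2}}v_m^2\dh}\,V(t_m).
\]
Since the right-hand side tends to $0$, so does $V(s_m)$, a contradiction. Note that no rigidity or uniqueness of the blow-up is invoked; only positivity of the trace of some subsequential weak limit on $\partial B_{1/2}$, which follows from homogeneity alone. This quantitative bridge between $V(s_m)$ and $V(t_m)$ is what your proposal lacks.
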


\begin{proof} The key step in the proof is the statement
(\ref{vol}), which we prove first. We start by writing the
right-hand side of the Frequency Formula in a more convenient form,
using a simple algebraic identity. For any real inner-product space
$(H, \langle,\rangle)$ with norm $||\,||$, any vectors $u$, $v$ and
any scalar $\alpha$, the following holds:
\begin{align}\frac{1}{||u||^4}\Big[||v||^2||u||^2-\langle
v,u\rangle^2\Big]&=\left|\left| \frac{v}{||u||}-\frac{\langle
v,u\rangle}{||u||^2}\frac{u}{||u||}\right|\right|^2\non\\&=\left|\left|
\frac{v}{||u||}-\frac{\langle v,u\rangle
}{||u||^2}\frac{u}{||u||}+\alpha\frac{u}{||u||}\right|\right|^2-\alpha^2,\non\end{align}
where we have used a cancellation due to orthogonality. Using the
notation introduced in Corollary \ref{rightlimit}, we apply the
above identity in the space $L^2(\partial B_r(x^0))$, with $u:=u$,
$v:=r (\nabla u\cdot\nu)$ and $\alpha:= V(r)$, after also taking
into account (\ref{part2}) in the form
\[\frac{\langle v,u\rangle}{||u||^2}= D(r),\]
to obtain from the Frequency Formula that
 \begin{align}\frac{d}{dr} F_{x^0,u}(r)
&= \frac{2}{r} \int_{\partial B_r(x^0)}\left[
\frac{v}{||u||}-D(r)\frac{u}{||u||}+V(r)\frac{u}{||u||}\right]^2\dh\non\\
&\qquad-\frac{2}{r}V^2(r)+\frac{2}{r}V(r)(D(r)-3/2).\non\end{align}
This formula can be rewritten as
 \begin{align} \frac{d}{dr} F_{x^0,u}(r)\label{newmf}
  &= \frac{2}{r} \int_{\partial B_r(x^0)}\Bigg[ \frac{r (\nabla
 u\cdot\nu)}{\left(\int_{\partial B_r(x^0)}u^2\dh\right)^{1/2}}
 \\&-F_{x^0,u}(r)\frac{u}{\left(\int_{\partial
B_r(x^0)}u^2\dh\right)^{1/2}}\Bigg]^2\dh\; +\;
\frac{2}{r} V(r)(F_{x^0,u}(r)-3/2).\non\end{align}
 Since $F_{x^0,u}(r)\geq 3/2$ for all $r\in (0,\delta)$, we obtain therefore
that for all $0<\varrho<\sigma<\delta$,
\begin{align}&\int_\varrho^\sigma\frac{2}{r} \int_{\partial B_r(x^0)}\Bigg[
\frac{r (\nabla u \cdot\nu)}{\left(\int_{\partial
B_r(x^0)}u^2\dh\right)^{1/2}}\label{aff}\\&-F_{x^0,u}(r)\frac{u}{\left(\int_{\partial
B_r(x^0)}u^2\dh\right)^{1/2}}\Bigg]^2\dh\,dr\leq
F_{x^0,u}(\sigma)-F_{x^0,u}(\varrho).\non\end{align}

Let us consider now an arbitrary sequence $r_m$ such that
$r_m\searrow 0$ as $m\to\infty$, and let $v_m$ be the sequence
defined in (\ref{vm}). It follows by scaling from (\ref{aff}) that,
for every $m$ such that $r_m\delta<1$ and for every
$0<\varrho<\sigma<1$,
\begin{align}\int_\varrho^\sigma\frac{2}{r} &\int_{\partial B_r}\left[
\frac{r (\nabla v_m \cdot\nu)}{\left(\int_{\partial
B_r}v_m^2\dh\right)^{1/2}}-F_{x^0,u}(r_mr)\frac{v_m}{\left(\int_{\partial
B_r}v_m^2\dh\right)^{1/2}}\right]^2\dh\,dr\non\\&\leq
F_{x^0,u}(r_m\sigma)-F_{x^0,u}(r_m \varrho)\to 0\quad\text{as
}m\to\infty,\non\end{align} since $F_{x^0,u}$ has a finite limit at
$0$. The above implies that
\begin{align}\int_\varrho^\sigma\frac{2}{r} &\int_{\partial
B_r}\left[ \frac{r (\nabla v_m \cdot\nu)}{\left(\int_{\partial
B_r}v_m^2\dh\right)^{1/2}}-F_{x^0,u}(0+)\frac{v_m}{\left(\int_{\partial
B_r}v_m^2\dh\right)^{1/2}}\right]^2\dh\,dr\non\\&\to 0 \quad\text{as
}m\to\infty.\label{coo}\end{align} Now note that, for every $r\in
(\varrho,\sigma)\subset (0,1)$ and all $m$ as before, it follows by
using Lemma \ref{freq1} that
\[\int_{\partial
B_r}v_m^2\dh=\frac{\int_{\partial B_{r_m r}(x_0)}
u^2\dh}{\int_{\partial B_{r_m}(x_0)} u^2\dh}\leq r^{n+2}\leq 1.\]
Therefore (\ref{coo}) implies that \be\int_{B_\sigma\setminus
B_\varrho} |x|^{-n-3}\left[\nabla v_m(x) \cdot
x-F_{x^0,u}(0+)v_m(x)\right]^2\,dx\to 0\quad\text{as
}m\to\infty.\label{vol}\ee

We can now prove all parts of the Proposition.

(i) Suppose towards a contradiction that (i) is not true. Let
$s_m\to 0$ be a sequence such that $V(s_m)$ is bounded away from
$0$.
 From the integrability of $r\mapsto \frac{2}{r}V^2(r)$ we
obtain that
\[ \min_{r\in [s_m, 2s_m]} V(r)\to 0\quad\text{as }m\to\infty.\]
Let $t_m\in [s_m, 2s_m]$ be such that $V(t_m)\to 0$ as $m\to\infty$.
For the choice $r_m:=t_m$ for each $m$, the sequence $v_m$ given by
(\ref{vm}) satisfies (\ref{vol}). The fact that $V(r_m)\to 0$
implies that $D(r_m)$ is bounded, and hence $v_m$ is bounded in
$W^{1,2}(B_1)$. Let $v_0$ be any weak limit of $v_m$ along a
subsequence. Note that $v_0$ has norm $1$ on $L^2(\partial B_1)$,
since this is true for $v_m$ for all $m$. It follows from
(\ref{vol}) that $v_0$ is homogeneous of degree $F_{x^0,u}(0+)$.
Note that, by using Lemma \ref{freq1} (ii),
\begin{align}
V(s_m)&=\frac{s_m^{-n-1}\int_{B_{s_m}(x^0)}x_n^+(1-\chi_{\{u>0\}})}
{s_m^{-n-2}\int_{\partial B_{s_m}(x^0)}u^2\dh}\non\\&\leq
\frac{s_m^{-n-1}\int_{B_{r_m}(x^0)}x_n^+(1-\chi_{\{u>0\}})}
{(r_m/2)^{-n-2}\int_{\partial B_{r_m/2}(x^0)}u^2\dh}\non\\
&\leq\frac{1}{2}\frac{\int_{\partial
B_{r_m}(x^0)}u^2\dh}{\int_{\partial
B_{r_m/2}(x^0)}u^2\dh}V(r_m)\non\\ &=\frac{1}{2\int_{\partial
B_{1/2}}v_m^2\dh}V(r_m).\label{smrm}\end{align} Since, at least
along a subsequence,
\[\int_{\partial B_{1/2}}v_m^2\dh\to\int_{\partial B_{1/2}}v_0^2\dh>0,\]
(\ref{smrm}) leads to a contradiction. It follows that indeed
$V(r)\to 0$ as $r\searrow 0$. This implies  that $D(r)\to
F_{x^0,u}(0+)$.

(ii) Let $r_m$ be an arbitrary sequence with $r_m\searrow 0$. The
boundedness of the sequence $v_m$ in $W^{1,2}(B_1)$ is equivalent to
the boundedness of $D(r_m)$, which is true by (i).

(iii) Let $r_m\searrow 0$ be an arbitrary sequence such that $v_m$
converges weakly to $v_0$. The homogeneity degree $F_{x^0,u}(0+)$ of
$v_0$ follows directly from (\ref{vol}). The homogeneity of $v_0$,
together with the fact that $v_0$ belongs to $W^{1,2}(B_1)$, imply
that $v_0$ is continuous.
 The fact that $\int_{\partial B_1}v_0^2\dh=1$ is a consequence of
 $\int_{\partial B_1}v_m^2\dh=1$ for all $m$,
 and the remaining claims of the Proposition are obvious.
\end{proof}

\section{Concentration compactness in two dimensions}
In the two-dimensional case we prove concentration compactness which
allows us to preserve variational solutions in the blow-up limit at
degenerate points and excludes concentration. In order to do so we
combine the concentration compactness result of Evans and M\"uller
\cite{evansmueller} with information gained by our Frequency
Formula. In addition, we obtain strong convergence of our blow-up
sequence which is necessary in order to prove our main theorems. The
question whether the following Theorem holds in any dimension seems
to be a hard one.

\begin{theorem}\label{comp}
Let $n=2$, let $u$ be a variational solution of
{\rm(\ref{strongp})}, and let $x^0\in\Sigma^u$. Let $r_m\searrow 0$
be such that the sequence $v_m$ given by {\rm(\ref{vm})} converges
weakly to $v_0$ in $W^{1,2}(B_1)$. Then $v_m$ converges to $v_0$
strongly in $W^{1,2}_{\textnormal{loc}}(B_1\setminus \{ 0\})$, and
$v_0$ satisfies $v_0 \Delta v_0=0$ in the sense of Radon measures on
$B_1$.
\end{theorem}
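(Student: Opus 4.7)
\emph{Plan.} Both conclusions follow from a single distributional identity, $v_0\,\Delta v_0=0$ as a Radon measure on $B_1$. My strategy is to extract strong convergence of the radial derivative from the integrated Frequency Formula, to exhibit $v_0\,\Delta v_0$ as the defect measure of the Dirichlet densities $|\nabla v_m|^2$, and finally to apply the concentration-compactness theorem of Evans and M\"uller \cite{evansmueller} together with the radial-derivative convergence to kill this defect. Strong $W^{1,2}_{\textnormal{loc}}(B_1\setminus\{0\})$ convergence then follows by a short integration-by-parts argument.

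By Proposition \ref{blowup}(ii) the sequence $v_m$ is bounded in $W^{1,2}(B_1)$, so Rellich gives $v_m\to v_0$ strongly in $L^2(B_1)$. The integrated identity \eqref{vol} from Theorem \ref{freq2} reads, after rescaling,
\[
\nabla v_m(x)\cdot x - \gamma\, v_m(x) \;\longrightarrow\; 0 \quad\text{strongly in } L^2_{\textnormal{loc}}(B_1\setminus\{0\}),
\]
with $\gamma:=F_{x^0,u}(0+)$. Combined with the homogeneity identity $\nabla v_0\cdot x=\gamma v_0$ from Proposition \ref{blowup}(iii), this forces strong $L^2_{\textnormal{loc}}(B_1\setminus\{0\})$ convergence of the radial derivative $\partial_r v_m\to\partial_r v_0$.

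Next, since each $v_m$ is continuous and $\mathrm{supp}(\Delta v_m)\subset\{v_m=0\}$, the product $v_m\,\Delta v_m=0$ as a Radon measure, so the pointwise identity $\Delta(v_m^2/2)=|\nabla v_m|^2+v_m\,\Delta v_m$ reduces to $\Delta(v_m^2/2)=|\nabla v_m|^2$ as Radon measures. Strong $L^1$ convergence $v_m^2\to v_0^2$ gives $\Delta(v_m^2/2)\rightharpoonup\Delta(v_0^2/2)$ distributionally, while $|\nabla v_m|^2\rightharpoonup^* |\nabla v_0|^2+\mu$ weak-$\ast$ as Radon measures for some defect $\mu\geq 0$. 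Comparing with $\Delta(v_0^2/2)=|\nabla v_0|^2+v_0\,\Delta v_0$ yields the identification $v_0\,\Delta v_0=\mu$ on $B_1$.

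It remains to show $\mu\equiv 0$. The Evans--M\"uller theorem \cite{evansmueller}, whose hypotheses are met here because $\Delta v_m\geq 0$ is uniformly bounded on compacts (obtained by rescaling the bound in Lemma \ref{arevar}), guarantees that in two dimensions the defect $\mu$ is purely atomic. The strong convergence of $\partial_r v_m$ rules out atoms of $\mu$ at any $y\in B_1\setminus\{0\}$, since an atom would carry Dirichlet energy localised near $y$ and produce radial concentration incompatible with the strong $L^2$ convergence of $\partial_r v_m$ on any small annulus around $y$. Hence $\mu$ is supported at most at the origin; since $v_0$ is continuous with $v_0(0)=0$, the product $v_0\,\Delta v_0$ does not charge $\{0\}$, forcing $\mu=0$ and $v_0\,\Delta v_0=0$ on $B_1$. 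For nonnegative $\phi\in C^\infty_c(B_1\setminus\{0\})$,
\[
\int\phi|\nabla v_m|^2=-\!\int v_m\nabla v_m\cdot\nabla\phi\;\longrightarrow\;-\!\int v_0\nabla v_0\cdot\nabla\phi=\int\phi|\nabla v_0|^2,
\]
the first equality using $v_m\Delta v_m=0$, the convergence using weak-times-strong, and the last equality using $v_0\Delta v_0=0$; together with weak $W^{1,2}$ convergence of $\nabla v_m$ this yields the desired strong convergence. The main obstacle is a precise invocation of the Evans--M\"uller concentration-compactness in our free-boundary setting and the final localisation of the defect at the origin: both the sign $\Delta v_m\geq 0$ and the strong radial convergence from the Frequency Formula are essential geometric inputs.
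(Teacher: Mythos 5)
Your plan shares both key ingredients with the paper's proof --- the Evans--M\"uller div-curl convergence and the strong radial-derivative convergence extracted from the Frequency Formula --- and the defect-measure calculus you set up (the identity $\Delta(v_m^2/2)=|\nabla v_m|^2$, the weak-$\ast$ defect $\mu\ge 0$, and the identification $\mu=v_0\Delta v_0$ from Rellich) is correct. However, the step that actually kills $\mu$ away from the origin contains two genuine gaps.

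First, the claim that Evans--M\"uller ``guarantees that in two dimensions the defect $\mu$ is purely atomic'' is not what Theorems 1.1 and 3.1 of \cite{evansmueller} give: those results yield convergence in $\mathcal{D}'$ of the null-Lagrangian quantities $\partial_1 v_m\partial_2 v_m$ and $(\partial_1 v_m)^2-(\partial_2 v_m)^2$ under the hypotheses $\Delta v_m\ge 0$ bounded; they do \emph{not} assert atomicity of the Dirichlet defect. Second, even granting atomicity, the argument ``an atom at $y\neq 0$ would produce radial concentration incompatible with strong $L^2$ convergence of $\partial_r v_m$'' does not hold: the concentrated energy could sit entirely in the angular component $r^{-2}(\partial_\theta v_m)^2$, to which strong convergence of $\partial_r v_m$ alone is blind.

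What actually closes the argument --- and what the paper does, expressed in the language of partial derivatives rather than of defect measures --- is to combine the two inputs. Writing $\partial_i v_m\partial_j v_m \rightharpoonup^* \partial_i v_0\partial_j v_0+\mu_{ij}$ (along a further subsequence), Evans--M\"uller gives $\mu_{12}=0$ and $\mu_{11}=\mu_{22}$; with $\mu_{11}+\mu_{22}=\mu$ this forces the defect matrix to be isotropic, $\mu_{ij}=\tfrac{1}{2}\mu\,\delta_{ij}$. Since $(\partial_r v_m)^2=\sum_{i,j}\hat r_i\hat r_j\,\partial_i v_m\partial_j v_m$ with continuous coefficients $\hat r_i\hat r_j$ on any annulus $B_\sigma\setminus \overline B_\varrho$, its defect is $\sum_{ij}\hat r_i\hat r_j\mu_{ij}=\tfrac{1}{2}\mu$; but the Frequency Formula gives strong $L^2$ convergence of $\partial_r v_m$ on such annuli, hence zero defect there. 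Therefore $\mu=0$ on $B_1\setminus\{0\}$, and $\mu(\{0\})=v_0(0)\,\Delta v_0(\{0\})=0$ since $v_0$ is continuous with $v_0(0)=0$. Once $\mu=0$ you also get strong $W^{1,2}_{\textnormal{loc}}(B_1\setminus\{0\})$ convergence of $\nabla v_m$, as in your final integration-by-parts display. With this substitution (isotropy of the defect in place of atomicity) your argument becomes a valid, and in some respects tidier, variant of the paper's; as written, however, the atomicity step is unsupported by the cited results and the atom-exclusion step is incorrect.
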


\begin{proof}

Note first that, since  $v_0$ is by Proposition \ref{blowup} a
nonnegative continuous function, $v_0\Delta v_0$ is well defined as
a nonnegative Radon measure on $B_1$.

 Let $\sigma$ and $\varrho$ with $0<\varrho<\sigma<1$ be arbitrary.
We know that $\Delta v_m \ge 0$ and $\Delta
v_m(B_{(\sigma+1)/2})\leq C_1$ for all $m$. In order to apply the
concentrated compactness result \cite{evansmueller}, we regularize
each $v_m$ to \[\tilde v_m := v_m*\phi_m \in C^\infty(B_1),\] where
$\phi_m$ is a standard mollifier such that \[\Delta \tilde v_m \ge
0, \int_{B_\sigma} \Delta \tilde v_m\le C_2<+\infty \quad\text{for
all } m,\] and \[\Vert v_m-\tilde v_m\Vert_{W^{1,2}(B_\sigma)}\to
0\quad\text{as }m\to\infty.\] From \cite[Chapter 4, Theorem
3]{evans} we know that $\nabla \tilde v_m$ converges a.e. to the
weak limit $\nabla v_0$, and the only possible problem is
concentration of $|\nabla \tilde v_m|^2$. By \cite[Theorem
1.1]{evansmueller} and \cite[Theorem 3.1]{evansmueller} we obtain
that \[\partial_1 \tilde v_m \partial_2 \tilde v_m \to
 \partial_1 v_0 \partial_2 v_0\]
and \[(\partial_1 \tilde v_m)^2- (\partial_2 \tilde v_m)^2 \to
(\partial_1 v_0)^2- (\partial_2 v_0)^2\] in the sense of
distributions on $B_\sigma$ as $m\to\infty$. It follows that
\be\label{evm}\partial_1 v_m \partial_2 v_m \to
 \partial_1 v_0 \partial_2 v_0\ee
and \[(\partial_1 v_m)^2- (\partial_2 v_m)^2 \to (\partial_1 v_0)^2-
(\partial_2 v_0)^2\] in the sense of distributions on $B_\sigma$ as
$m\to\infty$. Let us remark that this alone would allow us to pass
to the limit in the domain variation formula for $v_m$ in the set
$\{x_2>0\}$.

Observe now that (\ref{vol}) shows that \[\nabla v_m(x)\cdot x -
F_{x^0,u}(0+) v_m(x)\to 0\] strongly in $L^2(B_\sigma\setminus
B_\varrho)$ as $m\to\infty$. It follows that \[\partial_1 v_m x_1 +
\partial_2 v_m x_2\to \partial_1 v_0 x_1 + \partial_2 v_0 x_2\]
 strongly in $L^2(B_\sigma\setminus B_\varrho)$ as $m\to\infty$.
But then \begin{align*}
&\int_{B_\sigma\setminus B_\varrho} (\partial_1 v_m
\partial_1 v_m x_1 +
 \partial_1 v_m \partial_2 v_m x_2)\eta(x)
 \,dx
\\&\to \int_{B_\sigma\setminus B_\varrho} (\partial_1 v_0 \partial_1
v_0 x_1 + \partial_1 v_0 \partial_2 v_0 x_2)\eta(x)
 \,dx\end{align*}
for each $\eta \in C^0_0(B_\sigma\setminus \overline B_\varrho)$ as
$m\to\infty$. Using (\ref{evm}), we obtain that
\[\int_{B_\sigma\setminus B_\varrho} (\partial_1 v_m)^2 x_1 \eta(x)
 \,dx
\to \int_{B_\sigma\setminus B_\varrho} (\partial_1 v_0)^2 x_1
\eta(x)
 \,dx\]
for each $0\le \eta \in C^0_0((B_\sigma\setminus \overline
B_\varrho)\cap \{ x_1>0\})$ and for each $0\ge \eta \in
C^0_0((B_\sigma\setminus \overline B_\varrho)\cap \{ x_1 <0\})$
 as $m\to\infty$.
Repeating the above procedure three times for rotated sequences of
solutions (by $45$ degrees) yields that $\nabla v_m$ converges
strongly in $L^2_{\textnormal{loc}}(B_\sigma\setminus \overline
B_\varrho)$. Since $\sigma$ and $\varrho$ with $0<\varrho<\sigma<1$
were arbitrary, it follows that $\nabla v_m$ converges to $\nabla
v_0$ strongly in $L^2_{\textnormal{loc}}(B_1\setminus \{ 0\})$.

As a consequence of the strong convergence, we see that \[\int_{B_1}
\nabla (\eta v_0)\cdot \nabla v_0 =0 \quad\text{ for all } \eta \in
C^1_0(B_1\setminus\{0\}).\] Combined with the fact that $v_0=0$ in
$B_1\cap\{x_2\leq 0\}$, this proves that $v_0\Delta v_0=0$ in the
sense of Radon measures on $B_1$.
 \end{proof}

\section{Degenerate points in two dimensions}

\begin{theorem}\label{deg2d}Let $n=2$ and let $u$ be a variational
 solution of {\rm (\ref{strongp})}. Then at each point $x^0$ of the set
$\Sigma^u$ there exists an integer $N(x^0)\ge 2$ such that
\[F_{x^0,u}(0+)=N(x^0)\]
and
\[\frac{u(x^0+rx)}{\sqrt{r^{-1}\int_{\partial B_{r}(x^0)} u^2
\dhone}}\to
\frac{\rho^{N(x^0)}|\sin(N(x^0)\theta)|}{\sqrt{\int_0^{2\pi}\sin^2(N(x^0)\theta)
 d\theta}}\quad\text{as }r\searrow 0,\]
strongly in $W^{1,2}_{\textnormal{loc}}(B_1\setminus\{0\})$ and
weakly in $W^{1,2}(B_1)$, where $x=(\rho\cos\theta,\rho\sin\theta)$.
\end{theorem}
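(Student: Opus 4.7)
The plan is to combine the Frequency Formula (Theorem \ref{freq2}), the blow-up extraction of Proposition \ref{blowup}, and the two-dimensional concentration compactness of Theorem \ref{comp} with a classification of the resulting blow-up limits, followed by a passage to the limit in the rescaled domain variation identity.

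Fix $x^0 \in \Sigma^u$ and set $\gamma := F_{x^0,u}(0+) \in [3/2,\infty)$ by Theorem \ref{freq2}; along any sequence $r_m \searrow 0$, extract a weak subsequential limit $v_0$ of the normalized sequence $v_m$ from (\ref{vm}). Proposition \ref{blowup} makes $v_0$ continuous, nonnegative, homogeneous of degree $\gamma$, equal to $0$ on $\{x_2\leq 0\}$, with $\int_{\partial B_1} v_0^2\dhone = 1$, while Theorem \ref{comp} upgrades the convergence to strong in $W^{1,2}_{\textnormal{loc}}(B_1\setminus\{0\})$ and yields $v_0\Delta v_0 = 0$ as Radon measures, hence $v_0$ is harmonic on $\{v_0>0\}$. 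Writing $v_0(\rho,\theta) = \rho^\gamma\phi(\theta)$, on each component $(\alpha,\beta)\subset(0,\pi)$ of $\{\phi>0\}$ the ODE $\phi''+\gamma^2\phi = 0$ with $\phi(\alpha)=\phi(\beta)=0$ forces $\phi = A\sin(\gamma(\theta-\alpha))$ with $\beta-\alpha = \pi/\gamma$.

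I would then show these sectors tile $(0,\pi)$, pinning $\gamma$ to be an integer. Rescaling the domain variation identity via $x = x^0 + r_m y$ and $u = L_m v_m$ with $L_m^2 = r_m^{-1}\int_{\partial B_{r_m}(x^0)} u^2\dhone$, and dividing by $L_m^2 r_m^{-1}$, yields
\[
\int\bigl(|\nabla v_m|^2\div\psi - 2\nabla v_m D\psi\nabla v_m\bigr) + \mu_m\int\bigl(y_n\chi_{\{v_m>0\}}\div\psi + \chi_{\{v_m>0\}}\psi_n\bigr) = 0,
\]
with $\mu_m := r_m^3/L_m^2$. By Lemma \ref{freq1}(ii) one has $\mu_m\to\infty$, and the bulk converges by Theorem \ref{comp}, so $\int(y_n\chi_{\{v_m>0\}}\div\psi + \chi_{\{v_m>0\}}\psi_n)\to 0$. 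Rewriting the integrand as $\chi_{\{v_m>0\}}\div(y_n\psi)$ and letting $\tilde\psi := y_n\psi$ range over $C^1_0(\{y_n>0\})$, any weak-$L^1$ cluster point $\chi_\infty$ of $\chi_{\{v_m>0\}}$ is constant on the upper half-plane; uniform convergence of $v_m$ on compacta of the nonempty open set $\{v_0>0\}$ forces this constant to be $1$. Hence $\{v_0=0\}\cap\{x_2>0\}$ is closed, of measure zero and, by homogeneity, a finite union of rays; the $N$ open sectors of $\{v_0>0\}$ tile $(0,\pi)$, and equating widths gives $\gamma = N \in \Z_{\geq 2}$.

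The main obstacle is then the remaining step of showing that the sector amplitudes $A_k$ all coincide, so that $v_0$ matches the specific profile $\rho^N|\sin(N\theta)|/\sqrt{\int_0^{2\pi}\sin^2(N\theta)\,d\theta}$. I would extract a next-order condition from the scaled identity: combining the distributional formula $\Delta v_m = \sqrt{\mu_m}\sqrt{y_n}\,{\mathcal H}^1\lfloor\partial\{v_m>0\}$ with the $L^2_{\textnormal{loc}}$-convergence $v_m\to v_0$ forces the weighted perimeter $\int\phi\sqrt{y_n}\,d{\mathcal H}^1|_{\partial\{v_m>0\}}$ to be $O(1/\sqrt{\mu_m})$ on compacta of $\{y_n>0\}\setminus\{0\}$, while the uniform BV-type bound of Lemma \ref{arevar} controls it in the opposite direction. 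Applied near an interior ray $\theta = k\pi/N$, a test-function argument in the spirit of Proposition \ref{2dim} then yields the matching condition $|\nabla v_0|^2|_{\text{left}} = |\nabla v_0|^2|_{\text{right}}$ across the ray, giving $A_{k-1} = A_k$. The normalization $\int_{\partial B_1} v_0^2 = 1$ pins down $v_0$ uniquely, and since every subsequential blow-up limit coincides with this function, the full family converges as claimed, with strong convergence in $W^{1,2}_{\textnormal{loc}}(B_1\setminus\{0\})$ inherited from Theorem \ref{comp} and weak convergence in $W^{1,2}(B_1)$ from the uniform bound on $v_m$.
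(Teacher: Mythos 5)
Your setup matches the paper's: extract $v_0$ via Proposition~\ref{blowup} and Theorem~\ref{comp}, separate variables to get $v_0=\rho^\gamma\phi(\theta)$ with $\phi$ sinusoidal on each component of $\{\phi>0\}$, and then try to force tiling of $(0,\pi)$ and equality of the sector amplitudes. But the two steps you invent to achieve this are each gapped, and both gaps trace back to having discarded the key piece of information from Proposition~\ref{blowup}(i), namely $V(r_m)\to 0$.

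First, your argument that $B_m=\int\chi_{\{v_m>0\}}\div\tilde\psi\to 0$ only gives $\mu_m B_m\to -A_0$ where $A_0:=\int(|\nabla v_0|^2\div\psi-2\nabla v_0 D\psi\nabla v_0)$; it does not give $A_0=0$. From $B_m\to 0$ you correctly conclude that the weak-$*$ limit $\chi_\infty$ of $\chi_{\{v_m>0\}}$ is constant in $\{y_n>0\}$ and equals $1$, but you then assert that ``$\{v_0=0\}\cap\{x_2>0\}$ is of measure zero,'' which is a non sequitur: $\chi_\infty$ is the weak limit of $\chi_{\{v_m>0\}}$, not of $\chi_{\{v_0>0\}}$, and a sequence of strictly positive functions can degenerate to zero on a set of positive measure while $\chi_{\{v_m>0\}}\equiv 1$ there. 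The whole point of the analysis at points of $\Sigma^u$ is that precisely this collapse happens, so you cannot rule out a zero sector by this route. The missing ingredient is $V(r_m)\to 0$ from Proposition~\ref{blowup}(i): after scaling, $V(r_m)=\mu_m\int_{B_1}y_n^+(1-\chi_{\{v_m>0\}})$, and for $\psi$ supported in $\{y_n>\tau\}$ one has $|\mu_m B_m|\le C\tau^{-1}V(r_m)\to 0$. This upgrades the conclusion to $A_0=0$, i.e.\ the full first-variation identity $\int(|\nabla v_0|^2\div\phi-2\nabla v_0 D\phi\nabla v_0)=0$ holds for $\phi\in C^1_0(B_1\cap\{y_n>0\};\R^2)$, which is exactly what the paper derives.

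Second, your proposed ``next-order'' amplitude-matching argument relies on the distributional identity $\Delta v_m=\sqrt{\mu_m}\sqrt{y_n}\,\mathcal{H}^1\lfloor\partial\{v_m>0\}$, which encodes the pointwise free-boundary condition and hence the \emph{weak}-solution structure. Theorem~\ref{deg2d} is stated for \emph{variational} solutions, for which one only has the domain-variation identity; the argument as written is therefore not available at the stated level of generality. It is also not needed: once $A_0=0$ is established, integrating by parts across a ray of $\partial\{v_0>0\}\cap\{y_n>0\}$ gives the reflection condition $\lim_{\tau\searrow\theta}\partial_\theta v_0(1,\tau)=-\lim_{\tau\nearrow\theta}\partial_\theta v_0(1,\tau)$, which simultaneously forbids a positivity-to-zero transition (by Hopf, the one-sided limit from the positivity sector is nonzero) and forces equal amplitudes at positivity-to-positivity transitions. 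That single observation yields tiling, $\gamma=N\in\Z$, $N\ge 2$, and the explicit normalized profile, after which uniqueness of the subsequential limit gives convergence along the full family. In short: re-instate the use of $V(r_m)\to 0$, pass to the limit in the domain variation to get $A_0=0$, and deduce both tiling and amplitude-matching from the resulting reflection condition; then drop the $\chi_\infty$- and weighted-perimeter detours.
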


\begin{proof} Let $r_m\searrow 0$ be an arbitrary sequence such that the sequence $v_m$
given by (\ref{vm}) converges weakly in $W^{1,2}(B_1)$ to a limit
$v_0$. By Proposition \ref{blowup} (iii) and Theorem \ref{comp},
$v_0\not\equiv 0$, $v_0$ is homogeneous of degree $F_{x^0,u}(0+)\ge
3/2$, $v_0$ is continuous, $v_0\ge 0$  and $v_0 \equiv 0$ in $\{
x_2\leq 0\}$, $v_0\Delta v_0=0$ in $B_1$ as a Radon measure, and the
convergence of $v_m$ to $v_0$ is strong in
$W^{1,2}_{\textnormal{loc}}(B_1\setminus\{0\})$.
 Moreover, the strong
convergence of $v_m$ and the fact proved in Proposition \ref{blowup}
(i) that $V(r_m)\to 0$ as $m\to \infty$ imply that \[ 0=\int_{B_1}
\Big( {\vert \nabla v_0 \vert}^2 \div\phi   -   2 \nabla v_0   D\phi
\nabla v_0\Big)\] for every $\phi\in C^1_0(B_1\cap \{ x_2
>0\};\R^2).$ It follows that at each polar coordinate point
$(1,\theta)\in \partial B_1 \cap \partial \{ v_0>0\}$, \[\lim_{\tau
\searrow \theta} \partial_\theta v_0(1,\tau) = -  \lim_{\tau
\nearrow \theta} \partial_\theta v_0(1,\tau).\] Computing the
solution of the ODE on $\partial B_1$, using the homogeneity of
degree $F_{x^0,u}(0+)$ of $v_0$ and the fact that $\int_{\partial
B_1}v_0^2\dhone=1$ , yields that $F_{x^0,u}(0+)$ must be an {\em
integer} $N(x^0)\geq 2$ and that \be
v_0(\rho,\theta)=\frac{\rho^{N(x^0)}|\sin(N(x^0)\theta)|}{\sqrt{\int_0^{2\pi}\sin^2(N(x^0)\theta)
 d\theta}}\label{vo}.\ee
The desired conclusion follows from Proposition \ref{blowup} (ii).
\end{proof}

\begin{theorem}\label{degisol}
Let $n=2$ and let $u$ be a variational solution of {\rm
(\ref{strongp})}. Then the set $\Sigma^u$ is locally in $\Omega$ a
finite set.
\end{theorem}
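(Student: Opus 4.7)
The strategy is to argue by contradiction, extracting a blow-up sequence at a hypothetical accumulation point of $\Sigma^u$ and using the explicit asymptotic form of the blow-up $v_0$ at the accumulation direction to violate the scaled Frequency Formula at the accumulating points.

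Suppose $\Sigma^u$ is not locally finite. Since $\Sigma^u$ is closed, it has an accumulation point $x^0\in\Sigma^u$; pick $x^m\in\Sigma^u\setminus\{x^0\}$ with $x^m\to x^0$. As $x^m_2 = x^0_2 = 0$, the unit vectors $z^m:=(x^m-x^0)/|x^m-x^0|$ lie in $\{(\pm 1,0)\}$; passing to a subsequence, assume $z^m\equiv z=(1,0)$. Set $r_m:=|x^m-x^0|$ and let $v_m$ be the blow-up sequence from \eqref{vm}. Theorem \ref{deg2d} (together with a verbatim extension of the proof of Theorem \ref{comp} to balls of arbitrary radius, available since the Frequency Formula at $x^0$ applies on the full scale range $(0,\mathrm{dist}(x^0,\partial\Omega)/2)$) yields $v_m\to v_0$ strongly in $W^{1,2}_{\mathrm{loc}}(\mathbb{R}^2\setminus\{0\})$, where $v_0(r_*,\theta)=\pi^{-1/2}r_*^{N}|\sin(N\theta)|$ in $\{x_2\geq 0\}$ and $v_0\equiv 0$ in $\{x_2\leq 0\}$, with $N:=N(x^0)\geq 2$. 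By the upper semicontinuity of $F_{\cdot,u}(0+)$ on $\Sigma^u$ (Corollary \ref{density}) and its integer values there (Theorem \ref{deg2d}), a further subsequence satisfies $N(x^m)=k$ for a constant integer $k$ with $2\leq k\leq N$.

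Scaling the Frequency Formula at $x^m$ to the blow-up frame via $x=x^0+r_m w$, using $u(x^0+r_m w)=s_m v_m(w)$ and $x^m=x^0+r_m z$, the inequality $F_{x^m,u}(r_m\rho)\geq k$ (valid for $\rho\in(0,1/2)$ and $m$ large by monotonicity) transforms into
\[
\lambda_m A_m^\rho\;\leq\;\int_{B_\rho(z)}|\nabla v_m|^2\;-\;\frac{k}{\rho}\int_{\partial B_\rho(z)}v_m^2\dhone,
\]
where $\lambda_m:=r_m^3/s_m^2$ and $A_m^\rho:=\int_{B_\rho(z)}w_2^+(1-\chi_{\{v_m>0\}})\geq 0$. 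Lemma \ref{freq1}(ii) applied at $x^0$ gives $s_m^2/r_m^3=r_m^{-4}\int_{\partial B_{r_m}(x^0)}u^2\dhone\to 0$, hence $\lambda_m\to\infty$. Passing $m\to\infty$ with $\rho\in(0,1/2)$ fixed, using strong $W^{1,2}$ convergence on $B_\rho(z)\subset\mathbb{R}^2\setminus\{0\}$ and $L^2$-trace continuity on $\partial B_\rho(z)$, I obtain
\[
0\;\leq\;\limsup_m\lambda_m A_m^\rho\;\leq\;\int_{B_\rho(z)}|\nabla v_0|^2-\frac{k}{\rho}\int_{\partial B_\rho(z)}v_0^2\dhone.
\]

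A Taylor expansion of the explicit formula for $v_0$ at $z=(1,0)$ gives $v_0(x)=(N/\sqrt\pi)\,x_2^{+}+O(|x-z|^2)$, so that as $\rho\to 0$,
\[
\int_{B_\rho(z)}|\nabla v_0|^2=\tfrac{1}{2}N^2\rho^2+o(\rho^2),\qquad\int_{\partial B_\rho(z)}v_0^2\dhone=\tfrac{1}{2}N^2\rho^3+o(\rho^3).
\]
Hence the right-hand side of the previous inequality equals $\tfrac12 N^2(1-k)\rho^2+o(\rho^2)$, which is strictly negative for $\rho$ small, since $k\geq 2$. This contradicts the nonnegativity of $\limsup_m\lambda_m A_m^\rho$, completing the proof. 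The main obstacle is the delicate balance in the scaled Frequency inequality: the divergence $\lambda_m\to\infty$ is precisely what prevents the "dry" quantity $A_m^\rho$ from disappearing harmlessly in the limit and instead transfers the frequency condition $F_{x^m,u}(0+)\geq 2$ into a rigidity constraint on $v_0$ at $z$, which is then falsified by the linear (degree one) behaviour of $v_0$ at that boundary point.
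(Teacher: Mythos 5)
Your proof is correct and follows essentially the same strategy as the paper: blow up at a hypothetical accumulation point $x^0\in\Sigma^u$, transport the frequency lower bound from the accumulating points $x^m\in\Sigma^u$ to the accumulation direction $z$, and contradict the fact that the blow-up limit $v_0$ has frequency exactly $1$ at $z$ because it vanishes there to first order. The paper streamlines two of your steps: it takes $r_m=2|x^m-x^0|$ so that $z=(\pm 1/2,0)$ lies well inside $B_1$, which makes the strong $W^{1,2}$-convergence of Theorem \ref{deg2d} directly applicable and avoids the (plausible but unproved) extension of Theorem \ref{comp} to larger balls that you invoke; and it uses the uniform bound $F_{x^m,u}\ge 3/2$ from Lemma \ref{freq1}(i), rather than extracting a constant subsequence $N(x^m)\equiv k\ge 2$, since any bound strictly above $1$ already yields the contradiction. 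One small caveat: your closing remark that $\lambda_m\to\infty$ is ``precisely what prevents'' $A_m^\rho$ from disappearing harmlessly is misleading --- your argument only uses $\lambda_m A_m^\rho\ge 0$ (equivalently, drops the dry term, as the paper does), and the divergence of $\lambda_m$ plays no role.
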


\begin{proof} Suppose towards a contradiction that there is a sequence of
points $x^m\in\Sigma^u$ converging to  $x^0\in \Omega$, with
$x^m\neq x^0$ for all $m$. From the upper semicontinuity Lemma
\ref{density_2} (iv) we infer that $x^0\in \Sigma^u$. Choosing
$r_m:=2|x^m-x^0|$, there is no loss of generality in assuming that
the sequence $(x^m-x^0)/r_m$ is constant, with value $z\in
\{(-1/2,0), (1/2,0)\}$. Consider the blow-up sequence $v_m$ given by
(\ref{vm}), and also the sequence
\[u_m(x)=u(x^0+r_m x)/r_m^{3/2}.\] Note that each $u_m$ is a variational
solution of (\ref{strongp}), and $v_m$ is a scalar multiple of
$u_m$. Since $x_m\in\Sigma^u$, it follows that $z\in\Sigma^{u_m}$.
Therefore, Lemma \ref{freq1} shows that, for each $m$,
\[\int_{B_r(z)}|\nabla v_m|^2\geq \frac{3}{2}\int_{\partial B_r(z)}v_m^2\dhone\quad\text{for all }r\in (0,1/2).
\]
It is a consequence of Theorem \ref{deg2d} that the sequence $v_m$
converges strongly in $W^{1,2}(B_{1/4}(z))$ to $v_0$ given by
(\ref{vo}), hence
\[\int_{B_r(z)}|\nabla v_0|^2\geq \frac{3}{2}\int_{\partial B_r(z)}v_0^2\dhone\quad\text{for all }r\in (0,1/4).\]
But this contradicts the fact, which can be checked directly, that
\[\lim_{r\searrow 0} r\frac{\int_{B_r(z)} |\nabla v_0|^2}{\int_{\partial
B_r(z)}v_0^2\dhone}=1.\]
\end{proof}

\section{Conclusion}

\begin{theorem}\label{main1}
Let $n=2$, let $u$ be a weak solution of {\rm (\ref{strongp})}, and
suppose that
\[|\nabla u|^2 \leq x_2^+\quad\text{in }\Omega.\]
 Then the set $S^u$ of stagnation points is a finite or countable set. Each
accumulation point of $S^u$ is a point of the locally finite set
$\Sigma^u$.

 At each point $x^0$ of $S^u\setminus\Sigma^u$,
\[\frac{u(x^0+rx)}{{r^{3/2}}}\to \frac{\sqrt{2}}{3}
\rho^{3/2}\max(\cos(3(\theta-\pi/2)/2),0) \quad\text{as } r\searrow
0,\]
 strongly in $W^{1,2}_{\textnormal{loc}}(\R^2)$ and locally
uniformly on $\R^2$, where $x=(\rho\cos\theta, \rho\sin\theta)$.
Moreover,
\[\mathcal{L}^2(B_1\cap (\{x:
u(x^0+rx)>0\}\bigtriangleup\{\cos(3(\theta-\pi/2)/2)>0\}))\to
0\quad\text{as }r\searrow 0,\] and, for each $\delta>0$,
\[r^{-3/2}\Delta
u((x^0+B_r)\cap(\{\cos(3(\theta-\pi/2)/2)>\delta\}\cup
\{\cos(3(\theta-\pi/2)/2)<-\delta\}))\to 0\] as $r\searrow 0$.

At each point $x^0$ of $\Sigma^u$ there exists an integer $N(x^0)\ge
2$ such that \[\frac{u(x^0+rx)}{{r^\beta}}\to 0 \quad\text{as
}r\searrow 0,\] strongly in $L^2_{\textnormal{loc}}(\R^2)$ for each
$\beta \in [0,N(x^0))$, and
\[\frac{u(x^0+rx)}{\sqrt{r^{-1}\int_{\partial B_r(x^0)} u^2 \dhone}}
\to
\frac{\rho^{N(x^0)}|\sin(N(x^0)\theta)|}{\sqrt{\int_0^{2\pi}\sin^2(N(x^0)\theta)
 d\theta}} \quad\text{as } r\searrow 0,\]
strongly in $W^{1,2}_{\textnormal{loc}}(B_1\setminus\{0\})$ and
weakly in $W^{1,2}(B_1)$, where $x=(\rho\cos\theta,\rho\sin\theta)$.
\end{theorem}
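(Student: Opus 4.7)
The plan is to assemble the theorem from the preceding machinery. First, by Lemma \ref{arevar}, the hypothesis $|\nabla u|^2\le x_2^+$ promotes the weak solution $u$ to a variational solution and simultaneously yields the density estimate $r^{-3/2}\int_{B_r(y)}\sqrt{x_2}\,|\nabla\chi_{\{u>0\}}|\le C_0$ at every base point $y\in\Omega$ with $y_2=0$, so that every subsequent structural result of the paper applies to $u$. I would then decompose $S^u=(S^u\setminus\Sigma^u)\cup\Sigma^u$ and identify $S^u\setminus\Sigma^u$ with the non-degenerate stagnation points: Lemma \ref{zero} rules out $\Phi^{\textnormal{bound}}_{x^0,u}(0+)=0$ for $x^0\in S^u$, so Proposition \ref{2dim}(ii) forces the density at a point of $S^u\setminus\Sigma^u$ to equal the intermediate value $\int_{B_1}x_2^+\chi_{\{\cos(3(\theta-\pi/2)/2)>0\}}$, and Lemma \ref{equiv} certifies that this corresponds exactly to the non-degenerate regime of Definition \ref{nondeg}.

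At a point $x^0\in S^u\setminus\Sigma^u$, Proposition \ref{isolated} then delivers verbatim the Stokes corner-flow convergence in $W^{1,2}_{\textnormal{loc}}\cap C^0_{\textnormal{loc}}$, the set-symmetric-difference estimate, and the Laplacian vanishing estimate claimed in the theorem. For the global set-theoretic skeleton, Proposition \ref{accum} asserts that each point of $S^u\setminus\Sigma^u$ has a neighborhood in $\Omega$ meeting $S^u\setminus\Sigma^u$ only at that point, while Theorem \ref{degisol} asserts that $\Sigma^u$ is locally finite. Combining these yields that $S^u$ is at most countable. For the accumulation claim, let $x^0$ be an accumulation point of $S^u$; since $S^u$ is closed in $\Omega$, $x^0\in S^u$. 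If $x^0$ lay in $S^u\setminus\Sigma^u$, Proposition \ref{accum} would force the approaching sequence to lie in $\Sigma^u$, and the closedness of $\Sigma^u$ (Remark after its definition) would then place $x^0\in\Sigma^u$, a contradiction.

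At each $x^0\in\Sigma^u$, Theorem \ref{deg2d} directly supplies the integer $N(x^0)\ge 2$ together with the $W^{1,2}_{\textnormal{loc}}(B_1\setminus\{0\})$-strong and $W^{1,2}(B_1)$-weak convergence of the $L^2$-normalized blow-up to the stated angular profile. For the remaining $r^{-\beta}$ decay claim, I would invoke Lemma \ref{freq3}(iv) with $\gamma=F_{x^0,u}(0+)=N(x^0)>3/2$, which yields $r^{-\beta}u(x^0+r\cdot)\to 0$ strongly in $L^2_{\textnormal{loc}}(\R^2)$ for every $\beta\in[0,N(x^0))$. The main analytical obstacles, namely the Frequency Formula (Theorem \ref{freq2}) and the two-dimensional concentration compactness (Theorem \ref{comp}) that power Theorems \ref{deg2d} and \ref{degisol}, have already been dispatched in the previous sections; the present statement is essentially an assembly theorem, and the only genuinely new verification is the accumulation-point dichotomy sketched above.
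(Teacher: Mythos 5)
Your proposal is correct and follows the same route as the paper: invoke Lemma \ref{arevar} to pass to the variational setting with the $|\nabla\chi_{\{u>0\}}|$ bound, identify $S^u\setminus\Sigma^u$ with the non-degenerate stagnation points via Lemma \ref{zero} and Lemma \ref{equiv} (this is precisely Remark \ref{combin}), read off the Stokes asymptotics from Proposition \ref{isolated}, use Proposition \ref{accum} together with Theorem \ref{degisol} for the set-theoretic structure, and use Theorem \ref{deg2d} plus Lemma \ref{freq3}(iv) at points of $\Sigma^u$. The paper states its proof as a one-line "combine these lemmas"; you have simply filled in the glue, including the correct observation that $F_{x^0,u}(0+)=N(x^0)\geq 2>3/2$ is what licenses the application of Lemma \ref{freq3}.
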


\begin{proof} By Lemma \ref{arevar}, $u$ is a variational solution of {\rm (\ref{strongp})}
and satisfies
\[r^{-3/2}\int_{B_r(y)} \sqrt{x_2} |\nabla \chi_{\{ u>0\}}|\le C_0\]
for all $B_r(y)\subset\subset \Omega$ such that $y_n=0$. Combining
Proposition \ref{isolated}, Lemma \ref{equiv}, Lemma \ref{zero},
Proposition \ref{accum}, Lemma \ref{freq3}, Theorem \ref{degisol}
and Theorem \ref{deg2d}, we obtain that the set $S^u$ is a finite or
countable set with asymptotics as in the statement, and that the
only possible accumulation points are elements of
$\Sigma^u$.\end{proof}

\begin{theorem}\label{main2}
Let $n=2$, let $u$ be a weak solution of {\rm (\ref{strongp})}, and
suppose that
\[|\nabla u|^2 \leq x_2^+\quad\text{in }\Omega.\]
 Suppose moreover that $\{ u=0\}$ has locally only finitely many
connected components. Then the set $S^u$ of stagnation points is
locally in $\Omega$ a finite set. At each stagnation point $x^0$,
\[\frac{u(x^0+rx)}{{r^{3/2}}}\to \frac{\sqrt{2}}{3}
\rho^{3/2}\max(\cos(3(\theta-\pi/2)/2),0) \quad\text{as } r\searrow
0,\]
 strongly in $W^{1,2}_{\textnormal{loc}}(\R^2)$ and locally
uniformly on $\R^2$, where $x=(\rho\cos\theta, \rho\sin\theta)$, and
in an open neighborhood of $x^0$ the topological free boundary
$\partial\{u>0\}$ is the union of two $C^1$-graphs with right and
left tangents at $x^0$.
\end{theorem}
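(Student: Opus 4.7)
The plan is to invoke Theorem~\ref{main1} to reduce to excluding the degenerate set $\Sigma^u$, and then derive the local finiteness of $S^u$ together with the $C^1$-regularity of the free boundary from the Stokes corner asymptotics. By Theorem~\ref{main1}, $S^u$ is at most countable with accumulation points confined to the locally finite set $\Sigma^u$, and at each $x^0 \in S^u \setminus \Sigma^u$ the scaled solution converges strongly in $W^{1,2}_{\textnormal{loc}}(\R^2)$ and locally uniformly to the Stokes corner flow, together with measure convergence of $\{u(x^0 + r\,\cdot) > 0\}$ to the Stokes sector. Theorem~\ref{main2} thus reduces, under the finite-components assumption, to two assertions: (a) $\Sigma^u$ is locally empty in $\Omega$, and (b) in a neighborhood of each $x^0 \in S^u$, $\partial\{u > 0\}$ is the union of two $C^1$-graphs with the prescribed one-sided tangents at $x^0$.

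The main obstacle is (a), which I would argue by contradiction. Suppose $x^0 \in \Sigma^u$. Theorem~\ref{deg2d} (combined with Theorem~\ref{comp} and Proposition~\ref{blowup}) supplies an integer $N \geq 2$ such that the normalized blow-ups $v_m$ from~(\ref{vm}) converge strongly in $W^{1,2}_{\textnormal{loc}}(B_1 \setminus \{0\})$ and locally uniformly to a multiple of $\rho^N|\sin(N\theta)|$ on the upper half-plane (and zero on the lower half-plane). The positive set of this limit has $N$ open lobes of angular width $\pi/N$ in the upper half-plane, separated by the rays $\theta = k\pi/N$ for $k = 1, \ldots, N-1$. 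The plan is to transfer this $N$-lobe topology back to $u$: for small $m$, $\{u > 0\}$ in $B_{r_m}(x^0)$ should have at least $N$ connected components meeting $x^0$, separated by $N-1$ distinct air tongues emanating from $x^0$ into the upper half-plane. Each tongue is bounded by two analytic arcs of $\partial\{u > 0\}$ on which $|\nabla u|^2 = x_2 > 0$ by Definition~\ref{weak}; since these arcs are non-degenerate away from stagnation points, they cannot close off at a regular free-boundary point, so each tongue must either extend out of a fixed ambient neighborhood $U$ of $x^0$ or terminate at an additional stagnation point. Iterating this structural analysis along the scales $r_m \searrow 0$, and using Proposition~\ref{accum} and Theorem~\ref{degisol} to control the distribution of stagnation points, one expects to produce infinitely many distinct connected components of $\{u = 0\}$ accumulating at $x^0$, contradicting the hypothesis. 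The key technical difficulty is verifying that air tongues at geometrically distinct scales correspond to genuinely distinct components of $\{u = 0\}$, rather than to different arms of one common air region; this is where the finite-components hypothesis intervenes decisively.

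Once (a) is established, $S^u$ is locally finite by Theorem~\ref{main1} and the Stokes corner asymptotics from Proposition~\ref{isolated} hold at each stagnation point. For (b), fix $x^0 \in S^u$: Proposition~\ref{isolated} yields uniform and $W^{1,2}_{\textnormal{loc}}$-strong convergence of the scaled $u$ to the Stokes corner flow, together with the measure convergence $\mathcal{L}^2(B_1 \cap (\{u(x^0 + rx) > 0\} \bigtriangleup \{\cos(3(\theta - \pi/2)/2) > 0\})) \to 0$. Since by Definition~\ref{weak} the free boundary decomposes away from $x^0$ into analytic arcs satisfying $|\nabla u|^2 = x_2 > 0$, the measure convergence forces, at sufficiently small scales, the existence of exactly two analytic arcs approaching $x^0$, each contained in an increasingly narrow tubular neighborhood of the corresponding Stokes ray at $\theta = \pi/6$ or $\theta = 5\pi/6$. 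Each arc is then a graph over its tangent ray at small scales, and the uniform convergence of the normalized $u$ to the Stokes corner flow upgrades the one-sided limits of slopes at $x^0$ to match the slopes of these tangent rays, yielding the claimed $C^1$-graph decomposition with right and left tangents at $x^0$.
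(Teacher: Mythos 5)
Your high-level structure (reduce to Theorem~\ref{main1}, then show $\Sigma^u$ is empty, then establish $C^1$-regularity) matches the paper, but both of the two non-trivial steps diverge from the paper's proof and in each case there is a genuine gap.

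For the emptiness of $\Sigma^u$, the paper's argument is a \emph{single-scale} measure contradiction, not the iterated topological count you propose. Starting from the blow-up $v_0=c\,\rho^N|\sin(N\theta)|$ with $N\geq 2$, the finite-components hypothesis is used once, to extract a connected curve in $\partial_{\textnormal{red}}\{x:u(x^0+rx)>0\}$ converging locally in $\{x_2>0\}$ to one of the interior rays $\{\alpha z:\alpha>0\}$, $z_2>0$. This yields a uniform lower bound $\mathcal{H}^1(\{x_2>1/2\}\cap\partial_{\textnormal{red}}\{x:u(x^0+rx)>0\})\geq c_1>0$. On the other hand, since $x^0$ is degenerate, $u(x^0+r\cdot)/r^{3/2}\to 0$, whence $\Delta\big(u(x^0+r\cdot)/r^{3/2}\big)(B_1)=\int_{B_1\cap\partial_{\textnormal{red}}\{x:u(x^0+rx)>0\}}\sqrt{x_2}\,\dhone\to 0$, contradicting the lower bound. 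You never exploit this vanishing of the scaled free-boundary mass, which is what kills the argument in one stroke; instead you set up an infinite descent that requires comparing air tongues across infinitely many geometrically separated scales, and you yourself flag that you cannot verify that tongues at different scales are distinct components of $\{u=0\}$. That flagged step is exactly where the proposal fails to close, and it is avoided entirely by the paper.

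For the $C^1$-regularity, your argument from measure convergence plus analyticity of arcs is too weak: measure convergence of $\{u(x^0+r\cdot)>0\}$ to the Stokes sector does not by itself produce a graph structure with one-sided tangents at $x^0$, and uniform convergence of the scaled $u$ does not transfer to tangent-line information for the free boundary at the singular point. The paper instead verifies a flatness hypothesis of Alt--Caffarelli type (class $F(\sigma,0;\sigma_0\sigma^2)$ in the direction $\eta=(1/2,-\sqrt{3}/2)$, using the gradient bound $|\nabla u|\le\sqrt{y^0_2}(1+\sigma_0\sigma^2)$) at all nearby free-boundary points on one side of $x^0$, and invokes a flatness-implies-regularity theorem adapted from \cite{AC} and \cite{aw} (with the monotonicity formula of Theorem~\ref{elmon2}(i) replacing its parabolic counterpart) to conclude $C^{1,\alpha}$-graph structure with outer normal close to $\eta$. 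This regularity machinery is missing from your plan and is what actually yields the one-sided tangents.
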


\begin{proof} We first show that the set
$\Sigma^u$ is empty. Suppose towards a contradiction that there
exists $x^0\in \Sigma^u$. From Theorem \ref{deg2d} we infer that
there exists an integer $N(x^0)\ge 2$ such that
\[\frac{u(x^0+rx)}{\sqrt{r^{-1}\int_{\partial B_r(x^0)} u^2 \dhone}}
\to \frac{|\sin(N(x^0)\theta)|}{\sqrt{\int_0^{2\pi}\sin^2(N(x^0)
d\theta)  d\theta}} \quad\text{as } r\searrow 0,\] strongly in
$W^{1,2}_{\textnormal{loc}}(B_1\setminus\{0\})$ and weakly in
$W^{1,2}(B_1)$, where $x=(\rho\cos\theta,\rho\sin\theta)$. But then
the assumption about connected components implies that
$\partial_{\textnormal{red}} \{ x: u(x^0+rx)>0\}$ contains the image
of a continuous curve converging, as $r\searrow 0$, locally in $\{
x_2>0\}$ to a half-line $\{ \alpha z : \alpha
>0\}$ where $z_2>0$. It follows that
\[\mathcal{H}^1(\{ x_2>1/2\}\cap \partial_{\textnormal{red}} \{ x:u(x^0+rx)>0\})
\ge c_1 >0,\] contradicting \[ 0 \gets \Delta
\frac{u(x^0+rx)}{{r^{3/2}}}(B_1) = \int_{B_1\cap
\partial_{\textnormal{red}} \{ x:u(x^0+rx)>0\}}\sqrt{x_2}\,\dhone.\]
Hence $\Sigma^u$ is indeed empty.

Let $x^0\in S^u$. Theorem \ref{main1} shows that
\[\frac{u(x^0+rx)}{{r^{3/2}}}\to \frac{\sqrt{2}}{3}
\rho^{3/2}\max(\cos(3(\theta-\pi/2)/2),0) \quad\text{as } r\searrow
0,\]
 strongly in $W^{1,2}_{\textnormal{loc}}(\R^2)$ and locally
uniformly on $\R^2$, where $x=(\rho\cos\theta, \rho\sin\theta)$. To
prove the last statement we now use flatness-implies-regularity
results in the vein of \cite[Theorem 8.1]{AC}. More precisely, for
each $\sigma\le \sigma_0$ and $y^0\in B_\delta(x^0)\cap \partial\{
u>0\} \cap \{ y^0_1<x^0_1\}$, $u \in F(\sigma,0;\sigma_0\sigma^2)$
in $B_{r/2}(y^0)$ in the direction $\eta=(1/2,-\sqrt{3}/2)$ (cf.
\cite[Definition 4.1]{aw}) provided that $\delta$ has been chosen
small enough, meaning that $u$ is a weak solution and satisfies
\[ u(x)=0 \quad\text{in } \{ x\in
B_r(y^0):x\cdot \eta\ge \sigma r\}\] and \[|\nabla u|\le
\sqrt{y^0_2} (1+\sigma_0\sigma^2)\quad\text{ in } B_r(y^0).\] From
the proof of \cite[Theorem 8.4]{aw} (with the proviso that the
parabolic monotonicity formula in \cite{aw} is replaced by the local
elliptic formula in Theorem \ref{elmon2} (i) and the solution has
been extended to a constant function of the time variable) we infer
that $B_{r/2}(x^0)\cap \partial\{ u>0\} \cap \{ y^0_1<x^0_1\}$ is
the graph of a $C^{1,\alpha}$-function and that the outer normal
$\nu$ satisfies $|\nu(y^0)-\eta|\le \sigma$. It follows that
$B_\delta(x^0)\cap \partial\{ u>0\} \cap \{ y^0_1\le x^0_1\}$ is the
graph of a $C^1$-function. The same holds for $B_\delta(x^0)\cap
\partial\{ u>0\} \cap \{ y^0_1\ge x^0_1\}$.
\end{proof}

\section{Appendix}
\begin{proof}[Proof of Lemma \ref{arevar}.] For any $\phi \in
C^1_0(\Omega\cap \{ x_n>\tau\};\R^n)$ and small positive $\delta$ we
find a covering
\[ \bigcup_{i=1}^\infty B_{r_i}(x^i) \supset \text{supp }\phi
\cap (\partial \{u>0\} \setminus
\partial_{\textnormal{red}} \{u>0\}) \] satisfying
$\sum_{i=1}^\infty {r_i}^{n-1} \le \delta$, and by the fact that
$\text{supp }\phi \cap (\partial \{u>0\} \setminus
\partial_{\textnormal{red}} \{u>0\})$ is a compact set we may pass to a finite
subcovering
\[ \bigcup_{i=1}^{N_\delta} B_{r_i}(x^i) \supset \text{supp }\phi
\cap (\partial \{u>0\} \setminus
\partial_{\textnormal{red}} \{u>0\})\] satisfying
$\sum_{i=1}^{N_\delta} {r_i}^{n-1} \le \delta$. 

We also know that $u
\in C^1(\overline{\{u>0\}} \cap (\text{supp }\phi \setminus
\bigcup_{i=1}^{N_\delta} B_{r_i}(x^i)))$ and that $u$ satisfies the
free-boundary condition \[{\vert \nabla u \vert}^2 = x_n\quad\text{
on }
\partial_{\textnormal{red}}\{u>0\} \cap(\text{supp }\phi \setminus
\bigcup_{i=1}^{N_\delta} B_{r_i}(x^i)).\] Formally integrating by
parts in $\{u>0\} \setminus \bigcup_{i=1}^{N_\delta} B_{r_i}(x^i)$
(this can be justified rigorously approximating
$\bigcup_{i=1}^{N_\delta} B_{r_i}(x^i)$ from above by $A_\epsilon$
such that $\partial (\{u>0\} \setminus A_\epsilon)$ is locally in
$\text{supp }\phi$ a $C^3$-surface) we therefore obtain
\begin{align} &\left\vert \int_\Omega \left({\vert \nabla u \vert}^2
\div\phi  -  2  \nabla u D\phi  \nabla u +   x_n \chi_{\{u>0\}}
\div\phi+ \chi_{\{ u>0\}} \phi_n\right)\right\vert\non\\& \le
\left\vert \int_{\bigcup_{i=1}^{N_\delta} B_{r_i}(x^i)}\left( {\vert
\nabla u \vert}^2  \div\phi  -  2  \nabla u  D\phi \nabla u + x_n
\chi_{\{u>0\}}  \div\phi+   \chi_{\{ u>0\}} \phi_n\right)
\right\vert\non\\&\qquad +  \left\vert \int_{\partial
(\bigcup_{i=1}^{N_\delta} B_{r_i}(x^i)) \cap \{u>0\}} \left( {\vert
\nabla u \vert}^2 \phi\cdot \nu  -  2 \nabla u \cdot \nu  \nabla u
\cdot \phi   +  x_n \phi\cdot \nu \right) d{\mathcal H}^{n-1}
\right\vert\non\\&\qquad +  \left\vert \int_{\partial \{u>0\}
\setminus (\bigcup_{i=1}^{N_\delta} B_{r_i}(x^i))} (x_n - {\vert
\nabla u \vert}^2)  \phi\cdot \nu  d{\mathcal H}^{n-1}
\right\vert\non\\&  \le  C_1  \sum_{i=1}^{N_\delta} {r_i}^n  + C_2
 \sum_{i=1}^{N_\delta} {r_i}^{n-1}+ 0 ,\non\end{align} and letting
$\delta \to 0 ,$ we realize that $u$ is a variational solution of
{\rm (\ref{strongp})} in the set $\Omega \cap \{ x_n>\tau\}$. Note
that the above extends to Lipschitz functions $\phi$. Next, let us
take $\phi \in C^1_0(\Omega;\R^n)$ and $\eta:= \min(1,x_n/\tau)$,
plug in the product $\eta \phi$ into the already obtained result,
and use the assumption $|\nabla u|^2 \le Cx_n^+$:
\begin{align}0&=\int_\Omega \eta \Big( {\vert \nabla u \vert}^2 \div\phi
- 2 \nabla u D\phi\nabla u +x_n   \chi_{\{ u>0\}} \div\phi +
\chi_{\{ u>0\}} \phi_n\Big)\non\\&\qquad + \frac{1}{\tau}
\int_{\Omega\cap\{ 0<x_n<\tau\}} \phi\cdot \Big ({\vert \nabla u
\vert}^2 e_n - 2
\partial_n u \nabla u + x_n  \chi_{\{ u>0\}}e_n\Big)\non\\& =
 o(1) + \int_\Omega \Big( {\vert \nabla u \vert}^2 \div\phi
- 2 \nabla u   D\phi \nabla u + x_n \chi_{\{ u>0\}} \div\phi
  + \chi_{\{ u>0\}} \phi_n\Big)\quad\text{as $\tau\to 0$}.\non\end{align}

Last, let us prove that \[r^{1/2-n}\int_{B_r(y)} \sqrt{x_n} |\nabla
\chi_{\{ u>0\}}|\leq C_0\] for all $B_r(y)\subset\subset \Omega$
such that $y_n=0$. Let us consider such an $y$, and the family of
scaled functions
 \[u_r(x):= u(y+rx)/r^{3/2}.\]
Using the assumption
\[|\nabla u|^2 \le Cx_n^+\quad\text{locally in }\Omega\]
and the weak solution property, it follows that
\begin{align} C_0 &\ge \int_{\partial B_1} \nabla u_r(x)\cdot x \dh
\non \; = \; \Delta u_r(B_1)\\&= \int_{B_1\cap
\partial_{\textnormal{red}} \{ u_r>0\}} \sqrt{x_n}\dh\; = \; r^{1/2-n}
\int_{B_r(y)\cap \partial_{\textnormal{red}} \{ u>0\}}
\sqrt{x_n}\dh,\non\end{align} as required.
\end{proof}

\begin{proof}[Proof of Proposition \ref{partial}.] The proof is a standard
dimension reduction argument following \cite[Section 11]{giu}. In
each step, blowing up once transforms the free boundary into a cone,
blowing up a second time at a point different from the origin
transforms the free boundary into a cylinder, and passing to a
codimension $1$ cylinder section reduces the dimension of the whole
problem by $1$.

Let us do this in some more detail: Suppose that there exists
$s>n-2$ and $\varsigma>0$, $\kappa>0$ such that ${\mathcal
H}^s(N_{\varsigma,\kappa}^u)
>0 .$ Then we may use \cite[Proposition 11.3]{giu}, Lemma \ref{dist}
as well as \cite[Lemma 11.5]{giu} at ${\mathcal H}^s$-a.e. point of
$N_{\varsigma,\kappa}^u$ to obtain a blow-up limit $u_0$ satisfying
${\mathcal H}^{s,\infty}(N_{\varsigma,\kappa}^{u_0})>0$. According
to Lemma \ref{density_1} $u_0$ is a homogeneous variational solution
on $\R^n$, where $\chi_{\{ u>0\}}$ has to be replaced by $\chi_0:=
\lim_{m\to\infty} \chi_{\{ u_m>0\}}$ in Definition \ref{vardef}. We
proceed with the dimension reduction: By \cite[Lemma 11.2]{giu} we
find a point $\bar x \in N_{\varsigma,\kappa}^{u_0}\setminus\{0\}$
at which the density in \cite[Proposition 11.3]{giu} is estimated
from below. Now each blow-up limit $u_{00}$ with respect to $\bar x$
(and with respect to a subsequence $m\to \infty$ such that the limit
superior in \cite[Proposition 11.3]{giu} becomes a limit) again
satisfies the assumptions of Lemma \ref{density_1}. In addition, we
obtain from the homogeneity of $u_{00}$ as in Lemma 3.1 of
\cite{jga} that the rotated $u_{00}$ is constant in the direction of
the $n$-th unit vector. Defining $\bar u$ as the restriction of this
rotated solution to $\R^{n-1} ,$ it follows therefore that
${\mathcal H}^{s-1}(N_{\varsigma,\kappa}^{\bar u})>0$. Repeating the
whole procedure $n-2$ times we obtain a nontrivial homogeneous
solution $u^\star$ in $\R^2$, satisfying ${\mathcal
H}^{s-(n-2)}(N_{\varsigma,\kappa}^{u^\star})>0 ,$ by Proposition
\ref{2dim} a contradiction. \end{proof}

\bibliographystyle{siam}
\bibliography{varvaruca_weiss}

\end{document}